\newtheorem{theorem}{Theorem}[section]
\theoremstyle{plain}
\newtheorem{claim}{Claim}
\newtheorem{example}{Example}
\newtheorem{lemma}[theorem]{Lemma}
\newtheorem{proposition}[theorem]{Proposition}
\newtheorem{remark}[theorem]{Remark}
\numberwithin{equation}{section}
\begin{document}
\title[Noncompact manifolds that are inward tame]{ Noncompact manifolds that are inward tame}
\author{C. R. Guilbault }
\address{Department of Mathematical Sciences, University of Wisconsin-Milwaukee,
Milwaukee, Wisconsin 53201}
\email{craigg@uwm.edu}
\author{F. C. Tinsley}
\address{Department of Mathematics, The Colorado College, Colorado Springs, Colorado 80903}
\email{ftinsley@coloradocollege.edu}
\thanks{Work on this project was aided by a Simons Foundation Collaboration Grant
awarded to the first author.}
\date{September 1, 2015}
\keywords{manifold, end, tame, inward tame, open collar, pseudo-collar, near
psedo-collar, semistable, perfect group, perfectly semistable, almost
perfectly semistable, plus construction}

\begin{abstract}
We continue our study of ends of non-compact manifolds, with a focus on the
inward tameness condition. For manifolds with compact boundary, inward
tameness, by itself, has significant implications. For example, such manifolds
have stable homology at infinity in all dimensions. We show that these
manifolds have `almost perfectly semistable' fundamental group at each of
their ends. That observation leads to further analysis of the group theoretic
conditions at infinity, and to the notion of a `near pseudo-collar' structure.
We obtain a complete characterization of $n$-manifolds ($n\geq6$) admitting
such a structure, thereby generalizing \cite{GT2}. We also construct examples
illustrating the necessity and usefulness of the new conditions introduced
here. Variations on the notion of a perfect group, with corresponding versions
of the Quillen Plus Construction, form an underlying theme of this work.

\end{abstract}
\maketitle

\section{Introduction}

In ~\cite{Gu1}, \cite{GT1} and \cite{GT2} we carried out a program to
generalize L.C. Siebenmann's famous Manifold Collaring Theorem \cite{Si} in
ways applicable to manifolds with non-stable fundamental group at infinity.
Motivated by some important examples of finite-dimensional manifolds and a
seminal paper by T.A. Chapman and Siebenmann \cite{CS} on Hilbert cube
manifolds, we chose the following definitions. \medskip

\noindent A manifold $N^{n}$ with compact boundary is called a \emph{homotopy
collar} if $\partial N^{n}\hookrightarrow N^{n}$ is a homotopy equivalence. If
$N^{n}$ contains arbitrarily small homotopy collar neighborhoods of infinity,
we call $N^{n}$ a \emph{pseudo-collar. }\medskip

\noindent Clearly, an actual open collar $N^{n}$, i.e., $N^{n}\approx\partial
N^{n}\times\lbrack0,\infty)$, is a special case of a pseudo-collar.
Fundamental to \cite{Si}, \cite{CS}, and our earlier work, is the notion of
`inward tameness'. \medskip

\noindent A manifold $M^{n}$ is \emph{inward tame }if each of its clean
neighborhoods of infinity is finitely dominated; it is \emph{absolutely inward
tame} if those neighborhoods all have finite homotopy type. \medskip

\noindent An alternative formulation of this definition (see
\S \ref{Subsection: Topology of ends of manifolds}) justifies the adjective
`inward'---a term that helps distinguish this version of tameness from a
similar, but inequivalent, version found elsewhere in the literature.

In \cite{GT2} a classification of pseudo-collarable $n$-manifolds for $6\leq
n<\infty$ was obtained. In simplified form, it says:\medskip

\begin{theorem}
[Pseudo-collarability Characterization---simple version]\label{PCT}\ \newline
A 1-ended $n$-manifold $M^{n}$ ($n\geq6$) with compact boundary is
pseudo-collarable iff the following conditions hold:

\begin{enumerate}
\item[a)] $M^{n}$ is absolutely inward tame, and

\item[b)] the fundamental group at infinity is $\mathcal{P}$%
-semistable.\medskip
\end{enumerate}
\end{theorem}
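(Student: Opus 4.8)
The plan is to prove the two implications separately; the forward direction (necessity of (a) and (b)) is a short homotopy-theoretic argument, while the reverse direction (sufficiency) carries essentially all of the work.

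For necessity, suppose $M^{n}$ is pseudo-collarable and let $N$ be a homotopy collar neighborhood of infinity. Since $\partial N\hookrightarrow N$ is a homotopy equivalence and $\partial N$ is a closed manifold, $N$ has finite homotopy type; a gluing/Mayer--Vietoris argument then upgrades this to the statement that \emph{every} clean neighborhood of infinity has finite homotopy type (write such a neighborhood as a compact cobordism glued along a closed manifold to a homotopy collar), so $M^{n}$ is absolutely inward tame. For (b), choose a cofinal nested sequence $N_{0}\supseteq N_{1}\supseteq\cdots$ of homotopy collar neighborhoods of infinity together with a base ray, and observe that the layer $A_{i}=\overline{N_{i}\setminus N_{i+1}}$ is a compact cobordism for which $\partial N_{i}\hookrightarrow A_{i}$ is a homotopy equivalence (because $\partial N_{i+1}\hookrightarrow N_{i+1}$ is one and $N_{i}\simeq A_{i}\cup_{\partial N_{i+1}}N_{i+1}$). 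Such a one-sided $h$-cobordism is a \emph{geometric plus construction}: analyzing a handle decomposition and invoking Poincar\'e--Lefschetz duality shows that $\partial N_{i+1}\hookrightarrow A_{i}$ is surjective on $\pi_{1}$ with perfect kernel and is an isomorphism on homology with $\mathbb{Z}[\pi_{1}]$-coefficients. As these inclusions are, up to base-ray conjugation, the bonds of the fundamental pro-group at infinity, that pro-group is $\mathcal{P}$-semistable.

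For sufficiency, assume (a) and (b); the goal is to produce arbitrarily small homotopy collar neighborhoods of infinity, and for this it suffices to show that an arbitrary clean neighborhood of infinity $N$ contains one, since iterating (and using 1-endedness for cofinality and $\bigcap=\emptyset$) then yields a pseudo-collar structure. Inward tameness forces the fundamental groups of neighborhoods of infinity to be finitely presented, so each bond has a normally finitely generated kernel; absolute inward tameness gives each neighborhood of infinity finite homotopy type and, via stability of homology at infinity in all dimensions, controls the middle-dimensional homology. Using $\mathcal{P}$-semistability, pass to a subsequence of clean neighborhoods of infinity $N=V_{0}\supseteq V_{1}\supseteq\cdots$ whose bonds $\pi_{1}(V_{i+1})\to\pi_{1}(V_{i})$ are surjections with perfect, normally finitely generated kernels $K_{i}$. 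The engine of the proof is then a layer-by-layer construction: starting from $\partial N$, build inside $N$ a compact cobordism $A_{0}$ with $\partial N\hookrightarrow A_{0}$ a homotopy equivalence and with $\partial_{+}A_{0}\hookrightarrow A_{0}$ realizing geometrically the plus construction that kills $K_{0}$ --- attach finitely many $2$-handles to normally generate $K_{0}$, then finitely many $3$-handles to cancel the resulting second homology, and use Siebenmann-style handle trading (here $n\geq6$ enters, for the Whitney trick and for trading away handles of high index) to arrange that away from $\pi_{1}$ the cobordism is an honest $h$-cobordism, so that $\partial N\hookrightarrow A_{0}$ is genuinely a homotopy equivalence. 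Choosing $A_{0}$ small enough that $N^{(1)}:=\overline{N\setminus A_{0}}$ is again a clean neighborhood of infinity, repeat with $N^{(1)}$ and $K_{1}$, and so on; the union $A_{0}\cup A_{1}\cup\cdots$, suitably arranged, is a homotopy collar neighborhood of infinity inside $N$, since each consecutive piece is a one-sided $h$-cobordism and the homotopy type telescopes onto $\partial N$.

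The main obstacle is the $\pi_{1}$ bookkeeping forced by non-stability: one cannot simply take an inverse limit, and a general quotient $\pi_{1}(V_{i+1})\to\pi_{1}(V_{i})$ cannot be realized by handles without creating new fundamental-group generators --- which is exactly why ``perfect'' is indispensable, as a perfect normal subgroup can be killed by a $2$-/$3$-handle trade (a geometric plus construction) with no effect on $\pi_{1}$ elsewhere. The two hypotheses must be used in tandem: absolute inward tameness is what makes the finiteness obstructions to the handle slides and cancellations vanish at every finite stage (plain inward tameness would leave a Wall-type obstruction and force the weaker ``near pseudo-collar'' conclusion studied later in the paper), while $\mathcal{P}$-semistability is what routes all of the fundamental-group change through plus constructions. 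Matching the pro-isomorphism provided by $\mathcal{P}$-semistability to an actual nested family of submanifolds, and keeping every complement a clean neighborhood of infinity throughout the infinite process, is the delicate part of the argument.
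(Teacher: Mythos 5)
Your necessity direction is essentially complete: a clean neighborhood of infinity deformation retracts onto the compact closure of its complement in a smaller homotopy collar (so absolute inward tameness follows), and the layers between consecutive homotopy collars are one-sided h-cobordisms, whence Theorem \ref{one-sided} gives surjective bonds with perfect, finitely presentable kernels. The architecture of your sufficiency direction is also the right one --- it is the strategy of \cite{Gu1} and \cite{GT2}, which reappears here (with trivial augmentation) in the proof of Theorem \ref{NPCT technical version}.

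However, the sufficiency argument has a genuine gap exactly where all the difficulty lives: the phrase ``attach finitely many $2$-handles to normally generate $K_{0}$, then finitely many $3$-handles to cancel the resulting second homology'' conceals an obstruction that perfectness alone does not remove. After the $2$-handles are attached, cancelling $H_{2}$ (equivalently, after dualizing, killing $H_{n-2}(\widetilde{N}_{i},\partial\widetilde{N}_{i})$) requires embedded $2$-spheres in the new boundary that are both (i) algebraically dual over $\mathbb{Z}[\pi_{1}]$ to the belt spheres of the $2$-handles, and (ii) null-homotopic in the adjacent region, so that they bound embedded $3$-disks which can be thickened to the needed $3$-handles. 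Perfectness of the kernel yields (i) --- each normal generator bounds a surface whose handle curves lie in the kernel and can be capped off by the $2$-handles --- but (ii) is a genuine obstruction: those spheres need not contract where the $3$-handles must live, and the Whitney trick and handle trading in dimensions $\geq 6$ only resolve embedding and general-position issues \emph{after} contractibility is arranged. This is precisely why \cite{GT2} needs a ``significant preparatory step'' beyond \cite{Gu1}, and why the present paper imports the spherical alteration machinery of \cite{GT3} (Lemma \ref{Lemma from Spherical Alterations paper}): the $2$-handles must be altered in a preplanned way so that the correspondingly altered dual spheres contract in the next cobordism out. Without this step your construction produces only the weaker structures of \cite{Gu1} (or the near pseudo-collars studied in this paper), not an actual pseudo-collar. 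A secondary, smaller point: rather than attaching fresh handles ``from scratch'' inside $N$, the paper's route first improves the existing cobordisms $R_{i}=N_{i}-\operatorname{int}N_{i+1}$ to generalized $(n-3)$-neighborhood layers and uses the vanishing finiteness obstruction to base $H_{n-2}$ by handles already present; this is what guarantees the complements remain clean neighborhoods of infinity and the process is cofinal --- the issue you correctly flag as delicate but do not resolve.
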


A `$\mathcal{P}$-semistable (or perfectly semistable) fundamental group at
infinity' indicates that an inverse sequence of fundamental groups of
neighborhoods of infinity can be arranged so that bonding homomorphisms are
surjective with perfect kernels.

By way of comparison, the simple version of Siebenmann's Collaring Theorem is
obtained by replacing b) with the stronger condition of $\pi_{1}$-stability,
while Chapman and Siebenmann's pseudo-collarability characterization for
Hilbert cube manifolds is obtained by omitting b) entirely. Thus, the
differences among these three results lie entirely in the fundamental group at infinity.

In this paper we take a close look at $n$-manifolds satisfying only the inward
tameness hypothesis. By necessity, our attention turns to the group theory at
the ends of those spaces. Unlike the case of infinite-dimensional manifolds,
CW complexes, or even $n$-manifolds with noncompact boundary, inward tameness
has major implications for the fundamental group at the ends of $n$-manifolds
with compact boundary. Unfortunately, inward tameness (ordinary or absolute)
does not imply $\mathcal{P}$-semistability---an example from \cite{GT1}
attests to that---but it comes remarkably close. One of the main results of
this paper is the following.

\begin{theorem}
\label{Theorem: inward tame implies AP-semistability}Let $M^{n}$ be an inward
tame $n$-manifold with compact boundary. Then $M^{n}$ has an $\mathcal{AP}%
$-semistable (almost perfectly semistable) fundamental group at each of its
finitely many ends.\medskip
\end{theorem}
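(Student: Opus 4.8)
The plan is to use inward tameness twice: once, through its standard consequences for manifold ends, to reduce to a single end and to replace the inverse sequence of fundamental groups of neighborhoods of infinity by one with surjective bonds; and once more at the chain level to force the kernels of those bonds to be almost perfect. Here \emph{almost perfect} means, for a normal subgroup $K\trianglelefteq G$, that $H_1(K;\mathbb{Z})$ is finitely generated as a module over $\mathbb{Z}[G/K]$ (equivalently, that $K$ is generated by $[K,K]$ together with the $G$-conjugates of finitely many elements), and $\mathcal{AP}$-semistability of a pro-group means pro-isomorphism to an inverse sequence of epimorphisms with almost perfect kernels.

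First I would pass to a single end. Inward tameness forces finitely many ends, and a component $V$ of a small enough clean neighborhood of infinity is a one-ended manifold with compact boundary that is again inward tame (components of finitely dominated spaces are finitely dominated, and a clean neighborhood of infinity of $V$ is built from finitely dominated pieces glued along compact manifolds); so it suffices to treat one such $V$. Fix a cofinal sequence $N_1\supseteq N_2\supseteq\cdots$ of clean connected neighborhoods of infinity of $V$ with empty intersection; each $N_i$ is finitely dominated, hence has finitely presented $\pi_1$. Let $f_{ji}\co\pi_1(N_j)\to\pi_1(N_i)$ for $j\ge i$ be the bonding maps (relative to a fixed base ray). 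Since an inward tame manifold with compact boundary has semistable fundamental group at infinity --- this is where the Mittag--Leffler property first enters; see \cite{Gu1} --- for each $i$ the images $\operatorname{im}(f_{ji})$ stabilize, as $j\to\infty$, to a finitely generated subgroup $G_i\le\pi_1(N_i)$ (possibly of infinite index, and not necessarily normal), the $f_{ji}$ restrict to epimorphisms $\bar{f}_{ji}\co G_j\twoheadrightarrow G_i$, and $\{G_i,\bar{f}_{i+1,i}\}$ is pro-isomorphic to $\{\pi_1(N_i)\}$. It therefore remains to prove that each $K_{i+1}:=\ker(\bar{f}_{i+1,i})$ is almost perfect.

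For this, fix $i$ and choose $j$ so large that $\operatorname{im}(f_{ji})=G_i$ and $\operatorname{im}(f_{j,i+1})=G_{i+1}$, and put $L=\ker(f_{ji})$ and $L'=\ker(f_{j,i+1})$, which are normal in $\pi_1(N_j)$ with $L'\le L$. The third isomorphism theorem identifies $L/L'$ with $K_{i+1}$; since both of the relevant conjugation actions factor through $G_i$, right-exactness of abelianization then provides a $\mathbb{Z}[G_i]$-linear epimorphism $H_1(L;\mathbb{Z})\twoheadrightarrow H_1(K_{i+1};\mathbb{Z})$. Finally, $H_1(L;\mathbb{Z})$ is the first homology of the regular covering $\widehat{N}_j\to N_j$ with deck group $\pi_1(N_j)/L=G_i$, whose cellular chain complex is $\mathbb{Z}[G_i]\otimes_{\mathbb{Z}[\pi_1(N_j)]}C_*(\widetilde{N}_j)$. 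Because $N_j$ is dominated by a finite complex, $C_*(\widetilde{N}_j)$ is chain dominated over $\mathbb{Z}[\pi_1(N_j)]$ by a finite complex of finitely generated free modules; applying $\mathbb{Z}[G_i]\otimes_{\mathbb{Z}[\pi_1(N_j)]}(-)$ shows $C_*(\widehat{N}_j)$ is chain dominated over $\mathbb{Z}[G_i]$ by a finite complex of finitely generated free $\mathbb{Z}[G_i]$-modules, so every $H_k(\widehat{N}_j)$ --- in particular $H_1(L;\mathbb{Z})$, and hence its quotient $H_1(K_{i+1};\mathbb{Z})$ --- is finitely generated over $\mathbb{Z}[G_i]$. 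Thus $\{G_i,\bar{f}_{i+1,i}\}$ is an inverse sequence of epimorphisms with almost perfect kernels, pro-isomorphic to the fundamental pro-group of the chosen end, which proves $\mathcal{AP}$-semistability there and hence at every end of $M^n$.

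I expect the main obstacle to be the gap between the two levels on which the argument lives. Surjectivity of the bonds is only available after the purely algebraic passage to stable images $G_i$ --- one cannot in general realize surjective bonds by honest neighborhoods of infinity --- whereas the Wall-type chain finiteness lives on the finitely dominated spaces $N_j$ themselves. The reconciliation, namely presenting $K_{i+1}$ as the quotient $L/L'$ of an honest kernel $L\trianglelefteq\pi_1(N_j)$ and base-changing the finite domination of $C_*(\widetilde{N}_j)$ along $\mathbb{Z}[\pi_1(N_j)]\to\mathbb{Z}[G_i]$, is the technical heart, and it must be executed with care for equivariance and for the base-ray bookkeeping in the pro-fundamental group. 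The remaining substantial ingredient is the semistability itself, in case it is not imported as already known.
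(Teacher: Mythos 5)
There is a genuine gap, and it occurs before the proof even starts: you are working with the wrong definition of $\mathcal{AP}$-semistability. In this paper, ``almost perfectly semistable'' does not mean that the kernels $K_{i}=\ker\lambda_{i}$ have $H_{1}(K_{i};\mathbb{Z})$ finitely generated over $\mathbb{Z}[G_{i}/K_{i}]$. It means that the sequence is pro-isomorphic to one of finitely presentable groups and surjections satisfying the $\{L_{i}\}$-perfectness property for a small augmentation $\{L_{i}\}$ ($L_{i}\trianglelefteq\ker\lambda_{i}$); concretely, for the standard augmentation this requires $K_{i}\subseteq\left[\lambda_{i}^{-1}(K_{i-1}),\lambda_{i}^{-1}(K_{i-1})\right]$, i.e., each element of the kernel is a product of commutators of elements that die two stages down the sequence. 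This is a \emph{relative perfectness} condition (equivalently, $H_{1}(K_{i})\rightarrow H_{1}(\lambda_{i}^{-1}(K_{i-1}))$ is the zero map), and it neither implies nor is implied by finite generation of $H_{1}(K_{i})$ as a module. So even if every step of your argument is sound, you have proved a different theorem.

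The structure of your argument also signals the problem: after invoking semistability from \cite{Gu1}, your kernel analysis (right-exactness of abelianization plus base-changing the Wall-type chain domination of $C_{*}(\widetilde{N}_{j})$ along $\mathbb{Z}[\pi_{1}N_{j}]\rightarrow\mathbb{Z}[G_{i}]$) uses only finite domination and would apply verbatim to polyhedra or Hilbert cube manifolds, where the paper's conclusion is genuinely stronger than what holds. The actual proof must exploit the manifold structure with compact boundary. What the paper does is: (i) use semistability and the Generalized $1$-neighborhood Theorem to get an honest generalized $1$-neighborhood end structure with surjective bonds (so no passage to abstract stable images is needed); (ii) the ``easy'' half --- $K_{i}\subseteq[G_{i},G_{i}]$ --- follows from stability of $H_{1}$ at infinity; and (iii) the hard half --- landing in $[\lambda_{i}^{-1}(K_{i-1}),\lambda_{i}^{-1}(K_{i-1})]$ rather than merely $[G_{i},G_{i}]$ --- is obtained by lifting a taming homotopy to the cover of $N_{i-2}$ corresponding to the subgroup $\lambda_{i}^{-1}(K_{i-1})$, and using PL transversality and a proper degree-one map to produce, for each loop representing an element of $K_{i}$, a bounding orientable surface lying in that cover; Lemma \ref{Lemma: geometric perfect} then converts the surface into the required commutator expression. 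None of that geometric input appears in your proposal, and without it the relative perfectness of the kernels is not established.
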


Developing the appropriate group theory (including the definition of
$\mathcal{AP}$-semistable) and proving the above theorem are the initial goals
of this paper. After that is accomplished, we apply those investigations by
proving a structure theorem for manifolds that are inward tame, but not
necessarily pseudo-collarable.

\begin{theorem}
[Near Pseudo-collarability Characterization---simple version]%
\label{Theorem: NPC Characterization--simple version}A 1-ended $n$-manifold
$M^{n}$ ($n\geq6$) with compact boundary is nearly pseudo-collarable iff the
following conditions hold:

\begin{enumerate}
\item[a)] $M^{n}$ is absolutely inward tame, and

\item[b)] the fundamental group at infinity is $\mathcal{SAP}$%
-semistable.\medskip
\end{enumerate}
\end{theorem}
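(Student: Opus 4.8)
The plan is to follow the architecture of \thmref{PCT}, with the Quillen Plus Construction and the notion of a perfect bonding kernel replaced throughout by their ``almost'' analogues from the earlier sections, and with \thmref{Theorem: inward tame implies AP-semistability} supplying the group-theoretic starting point.

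First, the easy implication. If $M^{n}$ is nearly pseudo-collarable then, unwinding the definition, it contains a cofinal sequence of clean neighborhoods of infinity $N_{0}\supseteq N_{1}\supseteq\cdots$ in which each inclusion $N_{i+1}\hookrightarrow N_{i}$ has the special ``near-collar'' form built into that definition; in particular each $N_{i}$ has finite homotopy type and the tower $\{\pi_{1}(N_{i})\}$ is of $\mathcal{SAP}$ type. Absolute inward tameness then follows by the sandwiching argument of \secref{Subsection: Topology of ends of manifolds}: an arbitrary clean neighborhood of infinity $V$ contains some $N_{i}$ and is recovered from it by attaching a compact cobordism, so a homotopy-pushout/Mayer--Vietoris computation promotes the finite homotopy type of $N_{i}$ to $V$. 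And $\mathcal{SAP}$-semistability of the fundamental group at infinity is read off directly from the tower $\{\pi_{1}(N_{i})\}$, the base-ray bookkeeping being handled just as in \cite{GT2}. I expect this direction to be essentially formal.

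The substance is the converse. Assume $M^{n}$ is absolutely inward tame with $\mathcal{SAP}$-semistable fundamental group at infinity, and proceed in three stages. \emph{(1) Setting up the tower.} Combining inward tameness, \thmref{Theorem: inward tame implies AP-semistability}, and the $\mathcal{SAP}$ hypothesis, pass to a cofinal sequence of clean neighborhoods of infinity $N_{0}\supseteq N_{1}\supseteq\cdots$ whose fundamental-group tower has surjective bonds with almost perfect kernels, arranged in the refined (``$\mathcal{S}$'') form; absolute inward tameness upgrades each $N_{i}$ from finitely dominated to finite homotopy type, so each has finitely presented $\pi_{1}$ and finitely generated homology in every degree, including with $\ZZ[\pi_{1}]$-coefficients. \emph{(2) One cobordism at a time.} For fixed $i$ put $A=\overline{N_{i}\setminus N_{i+1}}$, a compact cobordism, rel the relevant part of $\partial M^{n}$, from $\partial N_{i+1}$ to $\partial N_{i}$. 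Using general position and handle manipulations---this is where $n\geq6$ is needed---first re-choose $N_{i+1}$ inside $N_{i}$ so that $\partial N_{i+1}\hookrightarrow A$ is $2$-connected; the only remaining obstruction to its being a homotopy equivalence then lies in $\ZZ[\pi_{1}]$-homology in degrees $\geq2$. Since the $\pi_{1}$-kernel is almost perfect, realize the algebraic ``almost plus construction'' geometrically by attaching $2$- and $3$-handles to a collar on $\partial N_{i}$ inside $A$; this kills that homology except for a controlled, finitely generated remnant. Handle-trading then exhibits $A$ (after the permitted adjustment) as a genuine homotopy collar stacked with one bounded-complexity non-collar block---exactly the local model allowed in a near pseudo-collar---producing a new clean neighborhood of infinity $N_{i+1}\pr\subseteq N_{i}$. \emph{(3) Iteration.} Each modification is supported in $A$, hence leaves earlier stages undisturbed; iterating and passing to the resulting cofinal subsequence yields the near pseudo-collar structure, after a final bookkeeping step reconciling the finitely many anomalous initial stages with the definition.

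The main obstacle is the heart of stage (2): performing the almost plus construction by handles while keeping the higher homology in check. One must show that, when the fundamental-group kernel is almost perfect, the obstruction to collaring the cobordism collapses to a \emph{finitely generated} $\ZZ[\pi_{1}]$-module---this is precisely where finite homotopy type, coming from absolute inward tameness, meets the homological algebra of the plus construction, through a Wang/Mayer--Vietoris spectral sequence over $\ZZ[\pi_{1}]$---and then that this finitely generated obstruction can be absorbed into a single admissible non-collar block. Making the handle count, the connectivity estimates, and the homology bookkeeping line up simultaneously and uniformly over the infinitely many stages is the delicate point; once that is in place, the rest is an adaptation of the machinery already developed in \cite{GT2}.
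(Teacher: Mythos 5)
Your outline follows the right architecture (that of \cite{Gu1} and \cite{GT2}), but both directions have genuine gaps at precisely the points where the new hypotheses have to do work. In the ``easy'' direction you assert that each $N_{i}$ of a near pseudo-collar structure has finite homotopy type and that $\mathcal{SAP}$-semistability can be ``read off directly'' from the tower. Neither is true. A $\left(  \operatorname{mod}L_{i}\right)  $-homotopy collar $N_{i}$ need not have finite homotopy type; the paper manufactures a cofinal family of finite-homotopy-type neighborhoods by attaching $2$-handles along a finite normal generating set of $K_{i}$ inside $N_{i-1}-\operatorname*{int}\left(  N_{i}\right)  $ --- possible exactly because the augmentation is small ($L_{i}\leq\ker\lambda_{i}$) --- and then invoking Lemma \ref{Lemma: (modL) h.e.}. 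Likewise, strong $J_{i}$-perfectness of the kernels is not part of the definition of a near pseudo-collar; it must be \emph{deduced} by showing each $\left(  W_{i-1},\partial N_{i-1},\partial N_{i}\right)  $ is a $\left(  \operatorname{mod}L_{i-1}\right)  $-one-sided h-cobordism and then applying Theorem \ref{Thm: one-sided general}, whose proof runs through Poincar\'{e} duality in the $L$-cover and the Stallings--Stammbach $5$-term exact sequence. Your proposal supplies neither argument.

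In the converse, your stage (2) names the crucial step (``realize the algebraic almost plus construction geometrically'') without supplying it. After the chain complex of $R_{i}$ is normalized, what is actually needed is: (i) a collection of $2$-spheres in $\partial_{-}T_{i}$ algebraically dual over $\mathbb{Z}\left[  \pi_{1}\left(  R_{i}\right)  /L_{i}\right]  $ to the belt spheres of the $2$-handles generating $\ker\partial$ --- this is where \emph{strong} $J$-perfectness, as opposed to mere $\mathcal{AP}$-semistability, enters, via \cite[\S 5]{GT3}; and (ii) a way to make those spheres contract in $R_{i}\cup R_{i-1}$ so that they bound pairwise disjoint embedded $3$-disks and can be thickened to the needed $3$-handles. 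Contractibility is not automatic and is achieved only by the spherical-alteration machinery of \cite[Lemma 6.1]{GT3}; your proposal contains no substitute for it. Two of your structural claims are also incorrect: the new $3$-handles are attached into $R_{i-1}$ and the spherical alterations modify existing $2$-handles, so the stage-$i$ modification is \emph{not} supported in $A$ and does disturb the preceding block; and the output of the construction is not ``a homotopy collar stacked with one non-collar block'' but a $\left(  \operatorname{mod}L_{i}\right)  $-homotopy collar, whose relative homology vanishes only with $\mathbb{Z}\left[  \pi_{1}\left(  R_{i}\right)  /L_{i}\right]  $ coefficients --- which is exactly why the conclusion is near pseudo-collarability rather than pseudo-collarability. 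Finally, the role of condition (a) beyond finite domination is to force $\sigma_{\infty}\left(  M^{n}\right)  =0$, which is what makes $H_{n-2}(\widetilde{N}_{i},\partial\widetilde{N}_{i})$ stably free and hence free after carving out trivial handles; your appeal to a Wang/Mayer--Vietoris spectral sequence does not capture this.
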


The notion of `near pseudo-collarability' will be defined and explored in
\S \ref{Section: Generalizing 1-sided cobordisms, homotopy collars, and pseudocollars}%
. For now, we note that nearly pseudo-collarable manifolds admit arbitrarily
small clean neighborhoods of infinity $N$, containing codimension 0
submanifolds $A$ for which $A\hookrightarrow N$ is a homotopy equivalence.
Obtaining a near pseudo-collar structure requires a slight strengthening of
$\mathcal{AP}$-semistability to $\mathcal{SAP}$-semistability (strong almost
perfect semistability). The essential nature of this stronger condition is
verified by a final result, in which our group-theoretic explorations come
together in a concrete set of examples.

\begin{theorem}
\label{Theorem: Existence of counterexamples}For all $n\geq6$, there exist
1-ended open $n$-manifolds that are absolutely inward tame but do not have
$\mathcal{SAP}$-semistable fundamental group at infinity, and thus, are not
nearly pseudo-collarable.\medskip
\end{theorem}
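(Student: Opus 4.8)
The plan is to construct, for each $n\geq 6$, a 1-ended open $n$-manifold $M^n$ that is absolutely inward tame but whose fundamental group at infinity fails $\mathcal{SAP}$-semistability. The first step is purely group-theoretic: build an inverse sequence $G_1 \xleftarrow{\lambda_1} G_2 \xleftarrow{\lambda_2} G_3 \xleftarrow{\lambda_3} \cdots$ of finitely presented groups with surjective bonding maps whose kernels are perfect (so that the sequence is $\mathcal{AP}$-semistable, indeed $\mathcal{P}$-semistable in the naive sense) but which cannot be re-indexed and replaced by a pro-isomorphic sequence of the special form required by $\mathcal{SAP}$-semistability. I expect the right source of such groups to be a careful iteration of the kind of construction already used in \cite{GT1} to show inward tameness does not imply $\mathcal{P}$-semistability: one wants the ``perfect kernels'' to be present but to become progressively more entangled with the images, so that no amount of passing to subsequences and neighborhoods-of-infinity-by-finite moves can realize the strong form. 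Torsion-free case aside, a convenient toolkit is the family of groups built from iterated commutator/relator tricks (e.g. groups arising as fundamental groups of certain 2-complexes, or variations on the examples with non-finitely-generated commutator subgroups) where one has explicit control of $H_1$, $H_2$, and the derived series.

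The second step is to realize this algebraic data geometrically. Following the template of \cite{GT2}, I would build $M^n$ as an increasing union $M^n = \bigcup_{i=1}^\infty V_i$ of clean compact codimension-$0$ submanifolds (equivalently, a decreasing sequence of clean neighborhoods of infinity $N_i = \overline{M^n \setminus V_i}$) where $\pi_1(N_i) \cong G_i$ and the inclusion-induced maps realize the bonding maps $\lambda_i$. The standard machinery here is: present each $G_i$ by finitely many generators and relators, use handle constructions (attach $1$- and $2$-handles) to a collar $\partial N_i \times [0,1]$ to build a cobordism $(W_i; N_{i+1}, N_i)$ inducing $\lambda_i$ on $\pi_1$, and — because $n\geq 6$ — invoke the appropriate handle-trading and Stallings-type engulfing/handle-cancellation arguments to keep the homotopy type of each $N_i$ finite and to keep $M^n$ one-ended and open. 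Ensuring \emph{absolute} inward tameness amounts to arranging each $N_i$ to be homotopy equivalent to a finite complex; this follows from the finite presentability of the $G_i$ together with controlling the higher homotopy/homology of the cobordisms, exactly as in the pseudo-collar constructions, so I would cite \thmref{PCT} and the techniques behind it rather than redo them.

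The third step is to verify the \emph{negative} conclusion: that $M^n$ does not have $\mathcal{SAP}$-semistable $\pi_1$ at infinity. Because the pro-isomorphism type of the inverse sequence $\{\pi_1(N_i)\}$ is an invariant of the end (independent of the chosen cofinal sequence of neighborhoods), it suffices to show the specific sequence $\{G_i, \lambda_i\}$ is not pro-isomorphic to any sequence of the form demanded by $\mathcal{SAP}$-semistability. This is the heart of the matter and where the delicate group theory from the earlier sections is used: one extracts from $\mathcal{SAP}$-semistability a structural consequence (something like: after passing to a subsequence, the bonding maps factor through quotients by perfect normal subgroups in a strongly compatible, ``coherent'' way) and then shows the $G_i$ were engineered precisely to violate that consequence — e.g. by a diagonal/limiting argument showing that any attempt to extract the required coherent tower forces one of the kernels to fail perfectness, or forces a bonding map to fail surjectivity, up to pro-isomorphism.

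The main obstacle I anticipate is exactly this last verification: $\mathcal{SAP}$-semistability is a condition about the \emph{existence} of a good re-indexed sequence, so ruling it out requires quantifying over all pro-equivalent sequences, and pro-isomorphisms are slippery (they involve passing to subsequences and inserting ``ladder'' factorizations). The proof will need a genuinely invariant obstruction — a functor or derived-series/homological bookkeeping invariant of the pro-group that is zero (or trivial) for $\mathcal{SAP}$-semistable ends but demonstrably nonzero for the constructed examples. Identifying and computing that invariant for the explicit $G_i$ is the crux; once it is in hand, plugging in the geometric realization from steps one and two and citing the $n\geq 6$ handle theory to guarantee absolute inward tameness completes the argument. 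A secondary technical point worth care is making the examples genuinely \emph{open} (empty boundary) rather than merely having compact boundary, which is arranged by starting the construction from $V_1$ a compact manifold with nonempty boundary and taking the interior, or by a standard boundary-filling move.
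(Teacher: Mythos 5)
Your three-step architecture (algebraic blueprint, handle-theoretic realization, pro-invariant obstruction) matches the paper's, but the proposal has two genuine gaps plus one outright logical slip. The slip first: you ask for surjective bonding maps ``whose kernels are perfect.'' If the kernels were actually perfect, the sequence would be $\mathcal{P}$-semistable, hence $\mathcal{SAP}$-semistable (a perfect kernel $K$ satisfies $K=[K,K]\subseteq[K,J]$ for every $J\supseteq K$, so the minimal augmentation works), and by Theorem \ref{PCT technical version} the manifold would be pseudo-collarable --- the opposite of what you want. The kernels must be non-perfect but relatively perfect: the paper takes $G_i=\mathbb{F}_3/\mathbb{A}_i$ with $\mathbb{A}_i$ the normal closure of iterated commutators $r_{i,j}$, so that $\ker\lambda_i$ lies in the commutator subgroup of $\ker(\lambda_{i-1}\lambda_i)$ ($\mathcal{AP}$-semistable) without being strongly perfect relative to anything small.

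The first real gap is exactly the one you flag as ``the crux'' and then leave open: a pro-isomorphism invariant that separates $\mathcal{SAP}$ from $\mathcal{AP}$. The paper's solution is the filtration $\Omega_q(H,G)=[\Omega_{q-1}(H,G),G]$ together with the Magnus embedding $\beta:\mathbb{F}_3\rightarrow\mathcal{P}_3^{\ast}$ into the power-series ring. If $K$ is strongly $J$-perfect then $\Omega_q(K,J)=K$ for all $q$, so any element of a kernel in an $\mathcal{SAP}$ sequence survives into every $\Omega_q$; pushing this across the pro-isomorphism ladder forces $r_{i,j}\in\Omega_q(\ker\lambda_{i+1,k},G_k)$ for all $q$. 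But Propositions \ref{Prop: omega groups get small} and \ref{Prop: no omega} show, via the powers $\Delta^{2^i+p_i}$ of the fundamental ideal, that $\Omega_{q_i}$ of the relevant kernel sinks too deep in the filtration to contain $r_{i,j}$. Without this (or an equivalent) computation the negative half of the theorem is unproved. The second gap is your plan to get absolute inward tameness by citing Theorem \ref{PCT} ``and the techniques behind it'': those results take inward tameness as a \emph{hypothesis}, and your examples are not pseudo-collarable, so there is nothing to cite. Finite presentability of the $G_i$ does not give finite homotopy type of the $N_i$. The paper instead designs the cobordisms $A_i$ out of complementary $(2,3)$-handle pairs, computes the $\mathbb{Z}G_i$-intersection numbers case by case, and shows $H_{\ast}(N_i,\Gamma_i;\mathbb{Z}G_i)$ is a free module concentrated in degree $n-2$, from which finite homotopy type follows by an elementary attach-and-collapse argument. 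That computation is load-bearing and cannot be replaced by a citation.
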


In \S \ref{Section: Remaining Questions}, we close with a discussion of some
open questions.

\begin{remark}
\emph{Throughout this paper attention is restricted to noncompact manifolds
with compact boundaries. When a boundary is noncompact, its end topology gets
entangled with that of the ambient manifold, leading to very different issues.
In the study of noncompact manifolds, a focus on those with compact boundaries
is analogous to a focus on closed manifolds in the study of compact manifolds.
An investigation of manifolds with noncompact boundaries is planned for
\cite{Gu2}.}
\end{remark}

\section{Definitions and Background}

\subsection{Variations on the notion of a perfect
group\label{Subsection: Variations on the notion of a perfect group}}

In this subsection we review the definition of `perfect group' and discuss
some variations.

Given elements $a$ and $b$ of a group $K$, the \emph{commutator }$a^{-1}%
b^{-1}ab$ will be denoted $\left[  a,b\right]  $. The \emph{commutator
subgroup} of $K$, denoted $\left[  K,K\right]  $ is the subgroup generated by
all commutators. It is a standard fact that $\left[  K,K\right]  $ is normal
in $K$ and is the smallest such subgroup with an abelian quotient. We call $K$
\emph{perfect} if $K=\left[  K,K\right]  $.

Now suppose $K$ and $J$ are normal subgroups of $G$. Define $\left[
K,J\right]  $ to be the subgroup of $G$ generated by the set of commutators
\[
\left[  k,j\right]  =\left\{  k^{-1}j^{-1}kj\mid k\in K\text{ and }j\in
J\right\}  .
\]
The following is standard and easy to verify.

\begin{lemma}
\label{Lemma: Generalized commutator subgroups}For normal subgroups $K$ and
$J$ of a group $G$,

\begin{enumerate}
\item $\left[  K,J\right]  \trianglelefteq G$,

\item $\left[  K,J\right]  \trianglelefteq K$ and $\left[  K,J\right]
\trianglelefteq J$, and

\item $\left[  K,J\right]  =\left[  J,K\right]  $.\medskip
\end{enumerate}
\end{lemma}

Given the above setup, we say that $K$ is $J$-\emph{perfect} if $K\subseteq
\left[  J,J\right]  $, and that $K$ is \emph{strongly }$J$-\emph{perfect} if
$K\subseteq\left[  K,J\right]  $\emph{.} By Lemma
\ref{Lemma: Generalized commutator subgroups}, both of these conditions imply
that $K\trianglelefteq J$; so we customarily begin with that as an assumption.

The following two Lemmas are immediate. We state them explicitly for the
purpose of comparison.

\begin{lemma}
Let $K\trianglelefteq J$ be normal subgroups of $G$.

\begin{enumerate}
\item $K$ is perfect if and only if each element of $K$ can be expressed as $%
{\textstyle\prod_{i=1}^{k}}
\left[  a_{i},b_{i}\right]  $ where $a_{i},b_{i}\in K$ for all $i$.

\item $K$ is $J$-perfect if and only if each element of $K$ can be expressed
as $%
{\textstyle\prod_{i=1}^{k}}
\left[  a_{i},b_{i}\right]  $ where $a_{i},b_{i}\in J$ for all $i$.

\item $K$ is strongly $J$-perfect if and only if each element of $K$ can be
expressed as \newline$%
{\textstyle\prod_{i=1}^{k}}
\left[  a_{i},b_{i}\right]  $ where $a_{i}\in K$ and $b_{i}\in J$ for all
$i$.\medskip
\end{enumerate}
\end{lemma}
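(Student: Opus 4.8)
The plan is to deduce all three equivalences from a single elementary observation: if $S$ is a symmetric subset of a group (that is, $S=S^{-1}$), then the subgroup $\langle S\rangle$ it generates is exactly the set of all finite products $s_1s_2\cdots s_m$ with each $s_i\in S$, the empty product being understood as the identity. First I would record this fact, and then note that each of the commutator subgroups occurring in the statement is generated by a \emph{symmetric} set of commutators, so that in each case $\langle S\rangle$ coincides with the set of finite products of commutators of the prescribed type.

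For (1) and (2) the symmetry is immediate from the identity $[a,b]^{-1}=[b,a]$: both $\{[a,b]:a,b\in K\}$ and $\{[a,b]:a,b\in J\}$ are symmetric, so $[K,K]$ is precisely the set of finite products $\prod_{i=1}^{k}[a_i,b_i]$ with all $a_i,b_i\in K$, and likewise $[J,J]$ is the set of finite products $\prod_{i=1}^{k}[a_i,b_i]$ with all $a_i,b_i\in J$. Since $[K,K]\subseteq K$ holds automatically, ``$K$ is perfect'' means $K=[K,K]$, equivalently $K\subseteq[K,K]$, which is exactly the condition in (1); this gives both directions at once. Statement (2) is even more direct, since ``$K$ is $J$-perfect'' is by definition the inclusion $K\subseteq[J,J]$, and that is exactly the displayed condition.

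For (3) the generating set is $S=\{[a,b]:a\in K,\ b\in J\}$, and the one point deserving a line of computation is that $S$ is still symmetric. Here I would invoke normality of $K$ in $G$: for $a\in K$ and $b\in J$ the element $b^{-1}ab$ again lies in $K$, and a short calculation with the convention $[x,y]=x^{-1}y^{-1}xy$ gives $[a,b]^{-1}=[b^{-1}ab,\ b^{-1}]$, so $[a,b]^{-1}\in S$. Hence $[K,J]$ equals the set of finite products $\prod_{i=1}^{k}[a_i,b_i]$ with each $a_i\in K$ and each $b_i\in J$, and ``$K$ is strongly $J$-perfect'', i.e. $K\subseteq[K,J]$, is precisely the condition in (3). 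There is no substantive obstacle in this argument — consistent with the Lemma being labeled immediate — the only slightly non-automatic step being the closure of the generating set in (3) under inversion, which is exactly the place where one uses that $K$ is normal in $G$, and not merely in $J$.
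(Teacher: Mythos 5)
Your proof is correct, and since the paper states this lemma without proof (declaring it "immediate"), your argument simply supplies the standard verification the authors leave implicit: a symmetric generating set generates exactly the set of finite products of its elements, and each definition ($K=[K,K]$, $K\subseteq[J,J]$, $K\subseteq[K,J]$) then translates directly into the stated product condition. The one genuinely non-automatic point — that $\{[a,b]:a\in K,\ b\in J\}$ is closed under inversion via $[a,b]^{-1}=[b^{-1}ab,\,b^{-1}]$, using normality of $K$ — is exactly right and is the detail worth recording.
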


\begin{lemma}
\label{basic implications}Let $K\trianglelefteq J\trianglelefteq L$ be normal
subgroups of $G$.

\begin{enumerate}
\item If $K$ is [strongly] $J$-perfect, then $K$ is [strongly] $L$-perfect for
every normal subgroup $L$ containing $J$.

\item $K$ is [strongly] $K$-perfect if and only if $K$ is a perfect group.
\end{enumerate}
\end{lemma}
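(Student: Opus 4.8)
The plan is to deduce both parts from a single elementary monotonicity property of the generalized commutator subgroup: if $A\subseteq A\pr$ and $B\subseteq B\pr$ are normal subgroups of $G$, then $\left[A,B\right]\subseteq\left[A\pr,B\pr\right]$, simply because every generating commutator $\left[a,b\right]$ with $a\in A$ and $b\in B$ of the left-hand side is among the generators of the right-hand side. I would record this observation first, since it does all the work.

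For part (1), assume $K$ is $J$-perfect, i.e.\ $K\subseteq\left[J,J\right]$. Since $J\subseteq L$, monotonicity gives $\left[J,J\right]\subseteq\left[L,L\right]$, hence $K\subseteq\left[L,L\right]$, which is exactly the assertion that $K$ is $L$-perfect. For the strong version, assume $K\subseteq\left[K,J\right]$; again $J\subseteq L$ yields $\left[K,J\right]\subseteq\left[K,L\right]$, so $K\subseteq\left[K,L\right]$, i.e.\ $K$ is strongly $L$-perfect. In each case the ambient hypothesis that makes the conclusion meaningful, namely $K\trianglelefteq L$, comes for free: by \lemref{Lemma: Generalized commutator subgroups} the containment $K\subseteq\left[L,L\right]$ (resp.\ $K\subseteq\left[K,L\right]$) already forces $K\trianglelefteq L$, as was noted just after the definitions.

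For part (2), one simply unwinds the definitions. ``$K$ is $K$-perfect'' means $K\subseteq\left[K,K\right]$, and ``$K$ is strongly $K$-perfect'' also means $K\subseteq\left[K,K\right]$, so the two notions coincide in this case. Since one always has $\left[K,K\right]\subseteq K$, the condition $K\subseteq\left[K,K\right]$ is equivalent to $K=\left[K,K\right]$, which is precisely the definition of $K$ being a perfect group.

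There is no genuine obstacle here; the content is bookkeeping with the definitions of \secref{Subsection: Variations on the notion of a perfect group}, and the only point that warrants a moment's care is that the conclusion of part (1) is well-posed, which is handled by \lemref{Lemma: Generalized commutator subgroups}.
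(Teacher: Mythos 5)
Your argument is correct: the monotonicity of $\left[\,\cdot\,,\cdot\,\right]$ in each variable plus unwinding the definitions is exactly the ``immediate'' verification the paper has in mind (the paper states this lemma without proof). Your added remark that $K\trianglelefteq L$ comes for free via Lemma~\ref{Lemma: Generalized commutator subgroups} is a nice touch but not needed beyond what the paper already records.
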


\begin{remark}
\emph{Lemma ~\ref{basic implications} suggests a key theme: \textquotedblleft
The smaller the group }$L$ \emph{for which }$K$\emph{ is [strongly] }%
$L$-\emph{perfect, the closer }$K$\emph{ is to being a genuine perfect
group.\textquotedblright}
\end{remark}

The various levels of perfectness can be nicely characterized using
\emph{group homology}. The $%
\mathbb{Z}
$-homology of a group $G$ may be defined as the $%
\mathbb{Z}
$-homology of a $K\left(  G,1\right)  $ space $K_{G}$. If $\lambda
:G\rightarrow H$ is a homomorphism, there is a map $f_{\lambda}:K_{G}%
\rightarrow K_{H}$, unique up to basepoint preserving homotopy, inducing
$\lambda$ on fundamental groups. Define $\lambda_{\ast}:H_{\ast}\left(  G;%
\mathbb{Z}
\right)  \rightarrow H_{\ast}\left(  H;%
\mathbb{Z}
\right)  $ to be the homomorphisms induced by $f_{\lambda}$.

\begin{lemma}
\label{Lemma: group homology}Let $K\trianglelefteq J$, $i:K\hookrightarrow J$
be inclusion, and $q:J\rightarrow J/K$ be projection.

\begin{enumerate}
\item $K$ is perfect if and only if $H_{1}\left(  K;%
\mathbb{Z}
\right)  =0$.

\item $K$ is $J$-perfect if and only if $i_{\ast}:H_{1}\left(  K;%
\mathbb{Z}
\right)  \overset{0}{\rightarrow}H_{1}\left(  J;%
\mathbb{Z}
\right)  $ if and only if $q_{\ast}:H_{1}\left(  J;%
\mathbb{Z}
\right)  \overset{\cong}{\rightarrow}H_{1}\left(  J/K;%
\mathbb{Z}
\right)  .$

\item $K$ is strongly $J$-perfect if and only if $K$ is $J$-perfect and
$q_{\ast}:H_{2}\left(  J;%
\mathbb{Z}
\right)  \rightarrow H_{2}\left(  J/K;%
\mathbb{Z}
\right)  $ is surjective.\medskip
\end{enumerate}

\begin{proof}
Claim 1) is clear from the standard fact that $H_{1}\left(  K\right)  \cong
K/\left[  K,K\right]  $. Claim 2) can be verified with elementary group
theory. Claim 3) follows from a well-known 5-Term Exact Sequence\ due to
Stallings \cite{Stal} and Stammbach \cite{Stam}. Due to its importance in this
paper, we state it as a separate lemma.
\end{proof}
\end{lemma}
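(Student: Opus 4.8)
The plan is to take the three claims in increasing order of difficulty, with essentially all of the substance in Claim~(3). For Claim~(1), I would invoke the Hurewicz-type identification $H_1(K;\mathbb{Z})\cong K/[K,K]$; vanishing of this group is literally the assertion $K=[K,K]$. For Claim~(2), I would again use $H_1(-;\mathbb{Z})=(-)_{\mathrm{ab}}$: under it $i_\ast$ is the map $k[K,K]\mapsto k[J,J]$, whose image is $K[J,J]/[J,J]$, so $i_\ast$ is the zero homomorphism iff $K\subseteq[J,J]$, i.e.\ iff $K$ is $J$-perfect. For the remaining ``iff,'' I would apply abelianization (which is right exact) to $1\to K\to J\xrightarrow{q}J/K\to 1$: then $q_\ast$ is onto with $\ker q_\ast=\mathrm{im}\,i_\ast$, so $q_\ast$ is an isomorphism precisely when $\mathrm{im}\,i_\ast=0$, i.e.\ when $K$ is $J$-perfect. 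Concretely this is the computation $(J/K)_{\mathrm{ab}}=J/(K[J,J])$, the ``elementary group theory'' the statement alludes to.

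For Claim~(3), I would bring in the Stallings--Stammbach five-term exact sequence associated to $1\to K\to J\xrightarrow{q}Q\to 1$, where $Q:=J/K$:
\[
H_2(J;\mathbb{Z})\xrightarrow{q_\ast}H_2(Q;\mathbb{Z})\xrightarrow{\partial}H_1(K;\mathbb{Z})_Q\xrightarrow{\alpha}H_1(J;\mathbb{Z})\xrightarrow{q_\ast}H_1(Q;\mathbb{Z})\to 0 .
\]
The one piece of bookkeeping needed is the identification of the coinvariants term: the conjugation action of $J$ on $K$ descends to a $Q$-action on $H_1(K)=K/[K,K]$, and since $[K,K]\subseteq[K,J]$ its group of coinvariants is $H_1(K)_Q=K/[K,J]$. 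Thus $K$ is strongly $J$-perfect exactly when this middle term vanishes. If it vanishes, then $\partial=0$ forces $q_\ast$ on $H_2$ to be onto, and $\alpha=0$ forces $q_\ast$ on $H_1$ to be injective, hence (with surjectivity coming from the ``$\to 0$'') an isomorphism, so $K$ is $J$-perfect by Claim~(2). Conversely, if $K$ is $J$-perfect then $\mathrm{im}\,\alpha=\ker(q_\ast\colon H_1(J)\to H_1(Q))=0$, so $\alpha=0$; if in addition $q_\ast\colon H_2(J)\to H_2(Q)$ is onto, then $\partial=0$; exactness at the middle term then gives $K/[K,J]=\ker\alpha=\mathrm{im}\,\partial=0$, so $K$ is strongly $J$-perfect.

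The only genuine obstacle is having the right tool in hand: the five-term exact sequence together with the correct identification of its $H_0(Q;-)$ term as $K/[K,J]$. Once that is set up, Claim~(3) is a two-line diagram chase. Accordingly, I would extract the five-term exact sequence---stated with the conjugation action and with the coinvariants computed explicitly---as a standalone lemma before running the chase, which is also the route the paper announces it will take.
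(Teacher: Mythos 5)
Your proposal is correct and follows essentially the same route as the paper: Claims (1) and (2) via abelianization, and Claim (3) via the Stallings--Stammbach five-term exact sequence with the middle term identified as $K/\left[K,J\right]$, which the paper likewise isolates as a standalone lemma. The diagram chase you describe for Claim (3) is exactly the intended argument and is carried out correctly in both directions.
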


\begin{lemma}
[5-Term Exact Sequence for Group Homology]\label{Lemma: 5 term exact} Given a
normal subgroup $K$ of a group $J$, there is a natural exact sequence:%
\[
H_{2}\left(  J;%
\mathbb{Z}
\right)  \rightarrow H_{2}\left(  J/K;%
\mathbb{Z}
\right)  \rightarrow K/\left[  K,J\right]  \rightarrow H_{1}\left(  J;%
\mathbb{Z}
\right)  \rightarrow H_{1}\left(  J/K;%
\mathbb{Z}
\right)  \rightarrow0\text{.}%
\]

\end{lemma}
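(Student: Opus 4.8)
The plan is to obtain this from the (homological) Lyndon--Hochschild--Serre spectral sequence of the group extension $1 \to K \to J \to Q \to 1$, where $Q = J/K$. This is a first-quadrant spectral sequence with $E^2_{p,q} = H_p\!\left(Q; H_q(K;\ZZ)\right)$ converging to $H_{p+q}(J;\ZZ)$, the coefficients $H_q(K;\ZZ)$ carrying the $Q$-action induced by conjugation in $J$. Any first-quadrant homological spectral sequence converging to $H_\ast$ comes equipped with a canonical exact sequence of terms of low degree
\[
H_2 \longrightarrow E^2_{2,0} \overset{d_2}{\longrightarrow} E^2_{0,1} \longrightarrow H_1 \longrightarrow E^2_{1,0} \longrightarrow 0,
\]
assembled from the two edge homomorphisms and the transgression $d_2$; this is a formal consequence of convergence together with vanishing in negative degrees, and I would simply quote it. So the real content is the identification of the three $E^2$-terms that appear.

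Since $H_0(K;\ZZ) = \ZZ$ with trivial $Q$-action, the bottom-row terms are immediate: $E^2_{1,0} = H_1(Q;\ZZ) = H_1(J/K;\ZZ)$ and $E^2_{2,0} = H_2(Q;\ZZ) = H_2(J/K;\ZZ)$. The substantive step is the identification $E^2_{0,1} = H_0\!\left(Q; H_1(K;\ZZ)\right) \cong K/[K,J]$. Here $H_1(K;\ZZ) = K/[K,K]$ and $H_0(Q;-)$ is the group of $Q$-coinvariants, i.e.\ the quotient of $K/[K,K]$ by the subgroup generated by all elements $\overline{k} - q\!\cdot\!\overline{k}$. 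Lifting $q \in Q$ to $j \in J$, the action sends $\overline{k}$ to $\overline{jkj^{-1}}$, so $\overline{k} - q\!\cdot\!\overline{k}$ is the class of $kjk^{-1}j^{-1} = \left[k^{-1},j^{-1}\right]$; as $k$ runs over $K$ and $j$ over $J$, these classes generate precisely $[K,J]/[K,K]$ inside $K/[K,K]$. Since $[K,K] \subseteq [K,J]$, the coinvariants are therefore $\bigl(K/[K,K]\bigr)\big/\bigl([K,J]/[K,K]\bigr) \cong K/[K,J]$.

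Substituting these identifications into the five-term sequence yields the asserted one; tracing the edge homomorphisms shows that the outer maps become $q_\ast$ on $H_2$ and on $H_1$, and that the middle map $K/[K,J] \to H_1(J;\ZZ)$ is the factorization through $K/[K,J]$ of $i_\ast\colon K/[K,K] \to J/[J,J]$ (legitimate as $[K,J] \subseteq [J,J]$). Naturality with respect to maps of pairs $(K \trianglelefteq J)$ is inherited from the naturality of the Lyndon--Hochschild--Serre spectral sequence under morphisms of extensions. I expect the fussiest point to be the coinvariants computation in the middle paragraph --- pinning down the conjugation convention so that the generating set for $[K,J]/[K,K]$ comes out exactly right. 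An alternative, spectral-sequence-free derivation runs the same identifications through the Hopf formula $H_2(G) \cong \left(R \cap [F,F]\right)/[F,R]$ applied to compatible free presentations of $J$ and $Q$; either route works, but the one above is the shortest to record, and since the result is classical (Stallings, Stammbach) a bare reference would ordinarily suffice.
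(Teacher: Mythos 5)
Your derivation is correct. Note, however, that the paper does not prove this lemma at all: it is stated as a classical result with references to Stallings and Stammbach, exactly as you anticipate in your closing sentence. Your Lyndon--Hochschild--Serre argument is the standard proof and all the identifications check out; in particular the delicate step, computing the coinvariants $H_{0}\left(J/K;H_{1}(K;\mathbb{Z})\right)$, is handled correctly --- the elements $\overline{k}-q\cdot\overline{k}$ are the classes of $kjk^{-1}j^{-1}=\left[k^{-1},j^{-1}\right]$, and as $k$ and $j$ range over $K$ and $J$ these generate exactly $\left[K,J\right]/\left[K,K\right]$, giving $\left(K/\left[K,K\right]\right)\big/\left(\left[K,J\right]/\left[K,K\right]\right)\cong K/\left[K,J\right]$. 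The only thing your write-up defers to a citation is the exactness of the generic low-degree sequence for a first-quadrant spectral sequence, which is a reasonable thing to quote. So your proposal supplies a complete proof where the paper supplies only a reference; there is nothing to reconcile between the two.
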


The following elementary facts about group homology will be useful.

\begin{lemma}
\label{Lemma: induced surjections on H_2}Let $f:X\rightarrow Y$ be a map
between connected CW complexes and $\lambda:\pi_{1}\left(  X\right)
\rightarrow\pi_{1}\left(  Y\right)  $ the induced homomorphism. Then

\begin{enumerate}
\item $H_{1}\left(  X;\mathbb{Z}\right)  \cong H_{1}\left(  \pi_{1}\left(
X,*\right)  ;\mathbb{Z}\right)  $,

\item $f_{\ast}:H_{1}\left(  X;%
\mathbb{Z}
\right)  \rightarrow H_{1}\left(  Y;%
\mathbb{Z}
\right)  $ realizes $\lambda_{\ast}:H_{1}\left(  \pi_{1}\left(  X\right)  ;%
\mathbb{Z}
\right)  \rightarrow H_{1}\left(  \pi_{1}\left(  Y\right)  ;%
\mathbb{Z}
\right)  $, and

\item if $f_{\ast}:H_{2}\left(  X;%
\mathbb{Z}
\right)  \rightarrow H_{2}(Y;%
\mathbb{Z}
)$ is surjective, then $\lambda_{\ast}:H_{2}\left(  \pi_{1}\left(  X\right)  ;%
\mathbb{Z}
\right)  \rightarrow\allowbreak H_{2}\left(  \pi_{1}\left(  Y\right)  ;%
\mathbb{Z}
\right)  $ is also surjective.
\end{enumerate}
\end{lemma}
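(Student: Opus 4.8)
The plan is to route everything through the natural classifying map $c_X\colon X\to K_{\pi_1(X)}$ into an Eilenberg--MacLane complex for $\pi_1(X)$, which may be built from $X$ by attaching cells only in dimensions $\geq 3$ (to successively kill $\pi_i$ for $i\geq 2$) and which induces the identity on fundamental groups. Since the pair $\bigl(K_{\pi_1(X)},X\bigr)$ has relative cells only in dimensions $\geq 3$, one has $H_i\bigl(K_{\pi_1(X)},X;\mathbb{Z}\bigr)=0$ for $i\leq 2$, so the long exact sequence of the pair shows that $(c_X)_{\ast}\colon H_i(X;\mathbb{Z})\to H_i\bigl(K_{\pi_1(X)};\mathbb{Z}\bigr)$ is an isomorphism for $i\leq 1$ and an epimorphism for $i=2$. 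Identifying $H_{\ast}\bigl(K_{\pi_1(X)};\mathbb{Z}\bigr)$ with $H_{\ast}\bigl(\pi_1(X);\mathbb{Z}\bigr)$ by the definition of group homology gives clause (1), and also records the auxiliary fact that $(c_X)_{\ast}\colon H_2(X;\mathbb{Z})\to H_2\bigl(\pi_1(X);\mathbb{Z}\bigr)$ is surjective; the same applies with $X$ replaced by $Y$.

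For clause (2) I would invoke naturality of this construction. Because $K_{\pi_1(Y)}$ is aspherical, any two basepoint-preserving maps $X\to K_{\pi_1(Y)}$ that induce the same homomorphism on $\pi_1$ are homotopic rel basepoint; since $f_\lambda\circ c_X$ and $c_Y\circ f$ both induce $\lambda$ on fundamental groups, the square
\[
\begin{CD}
X @>f>> Y\\
@Vc_XVV @VVc_YV\\
K_{\pi_1(X)} @>f_\lambda>> K_{\pi_1(Y)}
\end{CD}
\]
commutes up to homotopy. Applying $H_1(-;\mathbb{Z})$ and using the isomorphisms from clause (1) to identify the vertical maps, the bottom edge becomes $\lambda_{\ast}\colon H_1\bigl(\pi_1(X);\mathbb{Z}\bigr)\to H_1\bigl(\pi_1(Y);\mathbb{Z}\bigr)$ while the top edge is $f_{\ast}$; this is exactly clause (2).

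For clause (3) I would chase the same square on $H_2$. Assume $f_{\ast}\colon H_2(X;\mathbb{Z})\to H_2(Y;\mathbb{Z})$ is onto and fix $\beta\in H_2\bigl(\pi_1(Y);\mathbb{Z}\bigr)$. Using surjectivity of $(c_Y)_{\ast}$ from the first paragraph, choose $\alpha\in H_2(Y;\mathbb{Z})$ with $(c_Y)_{\ast}(\alpha)=\beta$; by hypothesis choose $\gamma\in H_2(X;\mathbb{Z})$ with $f_{\ast}(\gamma)=\alpha$. Then
\[
\lambda_{\ast}\bigl((c_X)_{\ast}(\gamma)\bigr)=(c_Y)_{\ast}\bigl(f_{\ast}(\gamma)\bigr)=(c_Y)_{\ast}(\alpha)=\beta ,
\]
so $\lambda_{\ast}$ is surjective, as claimed. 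No step here is a genuine obstacle — the lemma just repackages standard properties of Eilenberg--MacLane spaces. The one point deserving a clean statement is the assertion of the first paragraph, that $c_X$ is an $H_1$-isomorphism and an $H_2$-epimorphism, since that surjectivity is precisely what powers the diagram chase in clause (3), and it is where the ``cells of dimension $\geq 3$'' structure is used.
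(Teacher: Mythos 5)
Your argument is correct and is essentially the paper's own proof: the paper likewise builds $K(\pi_1,1)$'s by attaching cells of dimension $\geq 3$ to $X$ and $Y$, notes that the resulting inclusions are $H_1$-isomorphisms and $H_2$-epimorphisms, extends $f$ over the added cells using asphericity of the target, and runs the same diagram chase. The only cosmetic difference is that you get homotopy-commutativity of the square from uniqueness of maps into an aspherical complex, whereas the paper arranges strict commutativity by extending $f$ directly; this changes nothing.
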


\begin{proof}
Build a $K\left(  \pi_{1}\left(  X\right)  ,1\right)  $ complex $X^{\prime}$
by attaching cells of dimension $\geq3$ to $X$ and a $K\left(  \pi_{1}\left(
Y\right)  ,1\right)  $ complex $Y^{\prime}$ by attaching cells of dimension
$\geq3$ to $Y$. Both $X\overset{i}{\hookrightarrow}X^{\prime}$ and
$Y\overset{j}{\hookrightarrow}Y^{\prime}$ induce isomorphisms on $\pi_{1}$ and
$H_{1}$, so (1) follows. Use the asphericity of $Y^{\prime}$ to extend $f$ to
$f^{\prime}:X^{\prime}\rightarrow Y^{\prime}$, also inducing $\lambda$ on
$\pi_{1}$. Clearly $i_{\ast}:H_{2}\left(  X;%
\mathbb{Z}
\right)  \rightarrow H_{2}\left(  X^{\prime};%
\mathbb{Z}
\right)  $ and $j_{\ast}:H_{2}\left(  Y;%
\mathbb{Z}
\right)  \rightarrow H_{2}\left(  Y^{\prime};%
\mathbb{Z}
\right)  $ are surjective.

This gives a commutative diagram%
\[%
\begin{array}
[c]{ccc}%
H_{2}\left(  X;%
\mathbb{Z}
\right)  & \overset{f_{\ast}}{\twoheadrightarrow} & H_{2}\left(  Y;%
\mathbb{Z}
\right) \\
i_{\ast}\downarrow &  & \downarrow j_{\ast}\\
H_{2}\left(  \pi_{1}\left(  X\right)  ;%
\mathbb{Z}
\right)  & \overset{f_{\ast}^{\prime}}{\longrightarrow} & H_{2}\left(  \pi
_{1}\left(  Y\right)  ;%
\mathbb{Z}
\right)
\end{array}
\]
Since the other maps are all surjective, so is $f_{\ast}^{\prime}$.\bigskip
\end{proof}

Lastly we offer a topological characterization of the various levels of
perfectness. For the purposes of this paper, these are possibly the most useful.

Let $S_{g}$ denote a compact orientable surface of genus $g$ with a single
boundary component. A collection of oriented simple closed curves $\left\{
\alpha_{1},\beta_{1},\alpha_{2},\beta_{2},\cdots,\alpha_{g},\beta_{g}\right\}
$ on $S_{g}$ with the property that each $\alpha_{i}$ intersects $\beta_{i}$
transversely at a single point, and each of $\alpha_{i}\cap\alpha_{j}$,
$\beta_{i}\cap\beta_{j}$, and $\alpha_{i}\cap\beta_{j}$ is empty when $i\neq
j$, is called a \emph{complete set of handle curves} for $S_{g}$. A complete
set of handle curves on $S_{g}$ is not unique; however given any such set,
there exists a homeomorphism of $S_{g}$ to the `disk with $g$ handles'
pictured in Figure \ref{Fig1-handle curves} taking each $\alpha_{i}$ and
$\beta_{i}$ to the corresponding curves in the diagram.%
\begin{figure}
[ptb]
\begin{center}
\includegraphics[
height=1.785in,
width=3.4506in
]%
{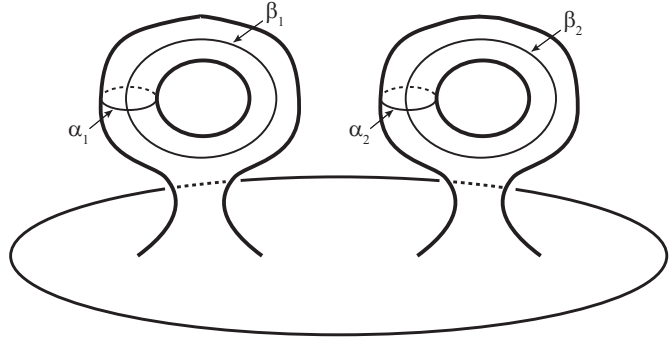}%
\caption{Complete set of handle curves ($g=2$ case)}%
\label{Fig1-handle curves}%
\end{center}
\end{figure}

Given a (not necessarily embedded) loop $\gamma$ in a topological space $X$,
we say that $\gamma$ \emph{bounds a compact orientable surface in} $X$ if, for
some $g$, there exists a map $f:S_{g}\rightarrow X$ such that $\left.
f\right\vert _{\partial S_{g}}=\gamma$. Notice that we do not require that $f$
be an embedding. We often abuse terminology slightly by saying that $\gamma$
bounds the surface $S_{g}$ in $X$. Similarly, we often do not distinguish
between a set of handle curves on $S_{g}$ and their images in $X$.

\begin{lemma}
\label{Lemma: geometric perfect} Let $X$ be a space with $\pi_{1}\left(
X,x_{0}\right)  \cong G$ and let $K\trianglelefteq J$ be normal subgroups of
$G$. Then

\begin{enumerate}
\item $K$ is perfect if and only if each loop $\gamma$ in $X$ representing an
element of $K$ bounds a surface $S_{g}$ in $X$ containing a complete set of
handle curves $\left\{  \alpha_{1},\beta_{1},\alpha_{2},\beta_{2}%
,\allowbreak\cdots\allowbreak,\alpha_{g},\beta_{g}\right\}  $ with each
$\alpha_{i}$ and $\beta_{i}$ belonging to $K$.

\item $K$ is $J$-perfect if and only if each loop $\gamma$ in $X$ representing
an element of $K$ bounds a surface $S_{g}$ in $X$ containing a complete set of
handle curves $\left\{  \alpha_{1},\beta_{1},\alpha_{2},\beta_{2}%
,\allowbreak\cdots\allowbreak,\alpha_{g},\beta_{g}\right\}  $ with each
$\alpha_{i}$ and $\beta_{i}$ belonging to $J$.

\item $K$ is strongly $J$-perfect if and only if each loop $\gamma$ in $X$
representing an element of $K$ bounds a surface $S_{g}$ in $X$ containing a
complete set of handle curves \newline$\left\{  \alpha_{1},\beta_{1}%
,\alpha_{2},\beta_{2},\allowbreak\cdots\allowbreak,\alpha_{g},\beta
_{g}\right\}  $ with each $\alpha_{i}$ belonging to $K$ and each $\beta_{i}$
belonging to $J$.\bigskip
\end{enumerate}
\end{lemma}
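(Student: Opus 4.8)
The plan is to reduce each of the three statements to the corresponding algebraic description given in the earlier Lemma on "products of commutators" (the unnumbered one following the definition of strongly $J$-perfect). The key observation is that a single commutator $\left[a,b\right]$ in $\pi_1\left(X,x_0\right)$ is precisely what is realized by the boundary of a once-punctured torus $S_1$ mapped into $X$: if $f\co S_1\rightarrow X$ carries the standard handle curves $\alpha,\beta$ to loops representing $a,b$, then $\left.f\right\vert_{\partial S_1}$ represents $\left[a,b\right]$, because in $\pi_1\left(S_1\right)$ (a free group on $\alpha,\beta$) the boundary loop is exactly the commutator $\left[\alpha,\beta\right]$. Conversely, given loops representing $a$ and $b$, one builds such an $f$ by mapping the two handles along these loops. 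Taking boundary connected sums of $g$ punctured tori gives $S_g$, whose boundary loop represents $\prod_{i=1}^{g}\left[\alpha_i,\beta_i\right]$ with respect to any complete set of handle curves; this is the geometric incarnation of a length-$g$ product of commutators.

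First I would prove the "only if" direction of (1). Suppose $K$ is perfect and $\gamma$ is a loop representing $k\in K$. After choosing a path from $x_0$ to the basepoint of $\gamma$ we may assume $\gamma$ is based at $x_0$. By the algebraic lemma, $k=\prod_{i=1}^{g}\left[a_i,b_i\right]$ with all $a_i,b_i\in K$. Choose based loops $\alpha_i,\beta_i$ representing $a_i,b_i$. Build $S_g$ as a disk with $g$ handles and define $f\co S_g\rightarrow X$ sending the $i$-th pair of handle curves to $\alpha_i,\beta_i$; extend over the $2$-cell of $S_g$ using the fact that its attaching map traces out $\prod\left[\alpha_i,\beta_i\right]$, which is null-homotopic rel basepoint after composing with $f$ since it represents $k\cdot k^{-1}$ once we arrange $\left.f\right\vert_{\partial S_g}=\gamma$ (more precisely: the loop $\gamma^{-1}\cdot\prod f_*\left[\alpha_i,\beta_i\right]$ is trivial in $\pi_1$, so $f$ extends over the bottom disk of the model after a preliminary homotopy). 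The handle curves $\alpha_i,\beta_i$ then lie in $K$ by construction. The "if" direction is immediate: if every such $\gamma$ bounds an $S_g$ with handle curves in $K$, then reading off $\pi_1\left(S_g\right)$ shows $k=\prod\left[\alpha_i,\beta_i\right]$ with $\alpha_i,\beta_i\in K$, so $k\in\left[K,K\right]$; hence $K\subseteq\left[K,K\right]$ and $K$ is perfect. Statements (2) and (3) are proved by the identical argument, simply relaxing the requirement "$\alpha_i,\beta_i\in K$" to "$\alpha_i,\beta_i\in J$" (for $J$-perfect) or to "$\alpha_i\in K$, $\beta_i\in J$" (for strongly $J$-perfect), matching the three clauses of the algebraic lemma verbatim; one uses Lemma~\ref{Lemma: Generalized commutator subgroups} to know these commutators land in the appropriate subgroup.

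The main technical point — the only place any care is needed — is the extension-over-the-$2$-cell step: one must check that an element of $\pi_1\left(X,x_0\right)$ expressed as $\prod_{i=1}^{g}\left[a_i,b_i\right]$ can be realized not merely up to conjugacy but as the actual based boundary word of a map out of the model surface $S_g$ pictured in Figure~\ref{Fig1-handle curves}, with the handle curves going exactly to the chosen $\alpha_i,\beta_i$. This is where the explicit model of $S_g$ as "disk with $g$ handles" is used: its $\pi_1$ is free on the $2g$ handle curves, its single relator-free $2$-cell is attached along the word $\prod\left[\alpha_i,\beta_i\right]$, and a map on the $1$-skeleton extends over that $2$-cell if and only if the image of the attaching word is null-homotopic in $X$. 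Arranging the boundary to coincide with $\gamma$ (not just be freely homotopic to it) is handled by absorbing the discrepancy, a null-homotopic based loop, into a collar of $\partial S_g$. Everything else is bookkeeping with the dictionary between based loops and group elements.
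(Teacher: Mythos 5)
Your proposal is correct. The paper actually states this lemma without proof (treating it as a standard consequence of the algebraic characterization of $[K,J]$ together with the fact that $\pi_1(S_g)$ is free on a complete set of handle curves with boundary word $\prod_{i=1}^{g}[\alpha_i,\beta_i]$), and your argument is precisely the intended one: the translation between products of commutators and maps of the disk-with-handles model, with the extension over the $2$-cell and the base-path bookkeeping (covered by the paper's normality remark) handled correctly.
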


\begin{remark}
\emph{We are being informal in the statement of Lemma
\ref{Lemma: geometric perfect}. Since the handle curves are not based, we
should also choose, for each pair }$\left(  \alpha_{i},\beta_{i}\right)
$\emph{, an arc }$\tau_{i}$\emph{ in }$S_{g}$\emph{ from }$x_{0}$\emph{ to
}$p_{i}=\alpha_{i}\cap\beta_{i}$\emph{. The element of }$\pi_{1}\left(
X,x_{0}\right)  $\emph{ represented by }$\alpha_{i}$\emph{ is then }$\tau
_{i}\ast\alpha_{i}\ast\tau_{i}^{-1}$\emph{, and similarly for }$\beta_{i}%
$\emph{. Notice that, by normality, the question of whether one of these loops
belongs to }$K$\emph{ or }$J$\emph{ is independent of the choice of }$\tau
_{i}$\emph{.}
\end{remark}

\subsection{Algebra of inverse sequences}

Understanding the `fundamental group at infinity' requires the language of
inverse sequences. We briefly review the necessary definitions and terminology.

Throughout this subsection all arrows denote homomorphisms, while those of
type $\twoheadrightarrow$ or $\twoheadleftarrow$ specify surjections. The
symbol $\cong$ denotes isomorphisms.

Let
\[
G_{0}\overset{\lambda_{1}}{\longleftarrow}G_{1}\overset{\lambda_{2}%
}{\longleftarrow}G_{2}\overset{\lambda_{3}}{\longleftarrow}\cdots
\]
be an inverse sequence of groups and homomorphisms. A \emph{subsequence} is an
inverse sequence of the form
\[
G_{i_{0}}\overset{\lambda_{i_{0}+1}\circ\cdots\circ\lambda_{i_{1}}%
}{\longleftarrow}G_{i_{1}}\overset{\lambda_{i_{1}+1}\circ\cdots\circ
\lambda_{i_{2}}}{\longleftarrow}G_{i_{2}}\overset{\lambda_{i_{2}+1}\circ
\cdots\circ\lambda_{i_{3}}}{\longleftarrow}\cdots.
\]
In the future we denote a composition $\lambda_{i}\circ\cdots\circ\lambda_{j}$
($i\leq j$) by $\lambda_{i,j}$.

Sequences $\left\{  G_{i},\lambda_{i}\right\}  $ and $\left\{  H_{i},\mu
_{i}\right\}  $ are \emph{pro-isomorphic} if, after passing to subsequences,
there exists a commuting diagram:
\[%
\begin{array}
[c]{ccccccc}%
G_{i_{0}} & \overset{\lambda_{i_{0}+1,i_{1}}}{\longleftarrow} & G_{i_{1}} &
\overset{\lambda_{i_{1}+1,i_{2}}}{\longleftarrow} & G_{i_{2}} & \overset
{\lambda_{i_{2}+1,i_{3}}}{\longleftarrow} & \cdots\\
& \nwarrow\quad\swarrow &  & \nwarrow\quad\swarrow &  & \nwarrow\quad\swarrow
& \\
& H_{j_{0}} & \overset{\mu_{j_{0}+1,j_{1}}}{\longleftarrow} & H_{j_{1}} &
\overset{\mu_{j_{1}+1,j_{2}}}{\longleftarrow} & H_{j_{2}} & \cdots
\end{array}
.
\]
Clearly an inverse sequence is pro-isomorphic to each of its subsequences. To
avoid tedious notation, we often do not distinguish $\left\{  G_{i}%
,\lambda_{i}\right\}  $ from its subsequences. Instead we assume $\left\{
G_{i},\lambda_{i}\right\}  $ has the properties of a preferred
subsequence---prefaced by the words `after passing to a subsequence and relabeling'.

An inverse sequence $\left\{  G_{i},\lambda_{i}\right\}  $ is \emph{stable} if
it is pro-isomorphic to an $\left\{  H_{i},\mu_{i}\right\}  $ for which each
$\mu_{i}$ is an isomorphism. A more usable formulation is that $\left\{
G_{i},\lambda_{i}\right\}  $ is stable if, after passing to a subsequence and
relabeling, there is a commutative diagram of the form
\begin{equation}%
\begin{array}
[c]{ccccccccc}%
G_{0} & \overset{\lambda_{1}}{\longleftarrow} & G_{1} & \overset{\lambda_{2}%
}{\longleftarrow} & G_{2} & \overset{\lambda_{3}}{\longleftarrow} & G_{3} &
\overset{\lambda_{4}}{\longleftarrow} & \cdots\\
& \nwarrow\quad\swarrow &  & \nwarrow\quad\swarrow &  & \nwarrow\quad\swarrow
&  &  & \\
& im(\lambda_{1}) & \overset{\cong}{\longleftarrow} & im(\lambda_{2}) &
\overset{\cong}{\longleftarrow} & im(\lambda_{3}) & \overset{\cong%
}{\longleftarrow} & \cdots &
\end{array}
\tag{$\ast$}%
\end{equation}
where all unlabeled maps are obtained by restriction. If $\left\{  H_{i}%
,\mu_{i}\right\}  $ can be chosen so that each $\mu_{i}$ is an epimorphism, we
call our sequence \emph{semistable }(or \emph{Mittag-Leffler, }%
or\emph{\ pro-epimorphic}). In that case, it can be arranged that the maps in
the bottom row of ($\ast$) are epimorphisms. Similarly, if $\left\{  H_{i}%
,\mu_{i}\right\}  $ can be chosen so that each $\mu_{i}$ is a monomorphism, we
call our sequence \emph{pro-monomorphic}; it can then be arranged that the
restriction maps in the bottom row of ($\ast$) are monomorphisms. It is easy
to show that an inverse sequence that is semistable and pro-monomorphic is stable.

An inverse sequence is \emph{perfectly semistable} if it is pro-isomorphic to
an inverse sequence
\[
G_{0}\overset{\lambda_{1}}{\twoheadleftarrow}G_{1}\overset{\lambda_{2}%
}{\twoheadleftarrow}G_{2}\overset{\lambda_{3}}{\twoheadleftarrow}\cdots
\]
of finitely presentable groups and surjections where each $\ker\left(
\lambda_{i}\right)  $ is perfect. A straightforward argument \cite[Cor.
1]{Gu1} shows that sequences of this type behave well under passage to subsequences.

\subsection{Augmented inverse sequences and almost perfect semistability.}

An \emph{augmentation} of an inverse sequence $\left\{  G_{i},\lambda
_{i}\right\}  $ is a sequence $\left\{  L_{i}\right\}  $, where $L_{i}%
\trianglelefteq G_{i}$ and $\lambda_{i}\left(  L_{i}\right)  \leq L_{i-1}$ for
each $i$. The corresponding \emph{augmentation sequence} is the sequence
$\left\{  L_{i},\left.  \lambda\right\vert _{L_{i}}\right\}  $.

The \emph{minimal augmentation} (or the \emph{unaugmented case}) occurs when
$L_{i}=\left\{  1\right\}  $; the \emph{maximal augmentation} is the case
where $L_{i}=G_{i}$; and the \emph{standard augmentation} occurs when
$L_{i}=\ker\lambda_{i}$ for each $i$. Any augmentation where $L_{i}\leq
\ker\lambda_{i}$ for each $i$ is called a \emph{small augmentation. }For each
subsequence $\left\{  G_{k_{i}}\right\}  $ of a sequence $\left\{
G_{i},\lambda_{i}\right\}  $ augmented by $\left\{  L_{i}\right\}  $, there is
a corresponding augmentation $\left\{  L_{k_{i}}\right\}  $.

We say that $\left\{  G_{i},\lambda_{i}\right\}  $ satisfies the $\left\{
L_{i}\right\}  $\emph{-perfectness property} if, for each $i$, $\ker
\lambda_{i}$ is $\lambda_{i}^{-1}\left(  L_{i-1}\right)  $-perfect; it
satisfies the \emph{strong }$\left\{  L_{i}\right\}  $\emph{-perfectness
property} if each $\ker\lambda_{i}$ is strongly $\lambda_{i}^{-1}\left(
L_{i-1}\right)  $-perfect. More concisely, if $K_{i}=\ker\lambda_{i}$ and
$J_{i}=\lambda_{i}^{-1}\left(  L_{i-1}\right)  $, these conditions require
that each $K_{i}$ be [strongly] $J_{i}$-perfect.

Employing the above terminology, we can restate the definition perfect
semistability (abbreviated $\mathcal{P}$\emph{-semistable}) by requiring that
the sequence be pro-isomorphic an inverse sequence of finitely presented
groups and surjections satisfying the $\left\{  L_{i}\right\}  $-perfectness
property for the minimal augmentation $\left\{  L_{i}\right\}  =\left\{
1\right\}  $. More generally, we call an inverse sequence of groups:

\begin{itemize}
\item $\mathcal{AP}$\emph{-semistable} (for almost perfectly semistable) if it
is pro-isomorphic to an inverse sequence $\left\{  G_{i},\lambda_{i}\right\}
$ of finitely presentable groups and surjections, satisfying the $\left\{
L_{i}\right\}  $-perfectness property for some small augmentation $\left\{
L_{i}\right\}  $, and

\item $\mathcal{SAP}$\emph{-semistable} (for strongly almost perfectly
semistable) if it is pro-isomorphic to an inverse sequence $\left\{
G_{i},\lambda_{i}\right\}  $ of finitely presentable groups and surjections
satisfying the strong $\left\{  L_{i}\right\}  $-perfectness property for some
small augmentation $\left\{  L_{i}\right\}  $.
\end{itemize}

\begin{remark}
\emph{Note that an inverse sequence satisfies the [strong] }$\left\{
L_{i}\right\}  $\emph{-perfectness property for }some\emph{ small augmentation
}$\left\{  L_{i}\right\}  $\emph{ if and only if it satisfies that property
for the standard augmentation.}
\end{remark}

When applying sequences of the above types to geometric constructions, it is
frequently desirable to pass to subsequences without losing the defining
property of the sequence. For that reason, the following observation is crucial.

\begin{proposition}
\label{Prop: nearly sub}If an inverse sequence $\left\{  G_{i},\lambda
_{i}\right\}  $ of surjections augmented by $\left\{  L_{i}\right\}  $
satisfies the [strong] $\left\{  L_{i}\right\}  $-perfectness property, then
any subsequence $\left\{  G_{k_{i}}\right\}  $ satisfies the corresponding
[strong] $\left\{  L_{k_{i}}\right\}  $-perfectness property.
\end{proposition}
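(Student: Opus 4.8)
The plan is to reduce the statement for a general subsequence to the case of "skipping one term at a time" and then to a purely group-theoretic lemma about compositions of surjections. So first I would observe that since pro-isomorphism and the defining properties are unaffected by relabeling, and since an arbitrary subsequence $\{G_{k_i}\}$ is obtained by finitely many (or countably many, one at a time) elementary passages of the form "delete $G_{m}$ and replace $\lambda_{m}, \lambda_{m+1}$ by $\lambda_{m}\circ\lambda_{m+1}$", it suffices to prove the proposition when the subsequence is $G_0 \overset{\lambda_{0,1}'}{\longleftarrow} G_2 \overset{\lambda_3}{\longleftarrow} G_3 \longleftarrow \cdots$, i.e. only the bond $G_0 \leftarrow G_2$ changes (here the composed bond is $\lambda_1\circ\lambda_2$, with the augmentation subsequence being $L_0, L_2, L_3, \dots$). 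All bonds other than the composed one, and all the relevant kernels $K_i$ and groups $J_i$ for $i \geq 3$, are literally unchanged, so nothing needs to be checked there. The only thing to verify is: if $\mu := \lambda_1\circ\lambda_2 : G_2 \to G_0$, then $\ker\mu$ is [strongly] $\mu^{-1}(L_0)$-perfect, given that $\ker\lambda_1$ is [strongly] $\lambda_1^{-1}(L_0)$-perfect and $\ker\lambda_2$ is [strongly] $\lambda_2^{-1}(L_1)$-perfect.

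Next I would isolate and prove that last assertion as the heart of the argument. Write $K_1 = \ker\lambda_1 \trianglelefteq J_1 = \lambda_1^{-1}(L_0) \trianglelefteq G_1$ and $K_2 = \ker\lambda_2 \trianglelefteq J_2 = \lambda_2^{-1}(L_1) \trianglelefteq G_2$, and set $N = \ker\mu = \lambda_2^{-1}(K_1)$, $M = \mu^{-1}(L_0) = \lambda_2^{-1}(J_1)$. We have the short exact sequence $1 \to K_2 \to N \overset{\lambda_2}{\to} K_1 \to 1$ (surjectivity onto $K_1$ uses that $\lambda_2$ is surjective), and similarly $\lambda_2$ carries $M$ onto $J_1$ with kernel $K_2$. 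For the non-strong case: take $x \in N$. Then $\lambda_2(x) \in K_1$, which by $J_1$-perfectness of $K_1$ (Lemma on element characterization) is a product $\prod [a_i,b_i]$ with $a_i, b_i \in J_1$. Lift each $a_i, b_i$ to $\widetilde a_i, \widetilde b_i \in M$ (possible since $\lambda_2(M) = J_1$). Then $x \cdot \big(\prod [\widetilde a_i, \widetilde b_i]\big)^{-1} \in \ker\lambda_2 = K_2$, and $K_2$ is $J_2$-perfect hence a product of commutators of elements of $J_2 = \lambda_2^{-1}(L_1)$; but $L_1 \leq \ker\lambda_1$ (smallness of the augmentation!), so $J_2 = \lambda_2^{-1}(L_1) \subseteq \lambda_2^{-1}(\ker\lambda_1) = \ker\mu = N \subseteq M$. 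Thus those commutators are commutators of elements of $M = \mu^{-1}(L_0)$, and combining, $x$ is a product of commutators of elements of $M$, i.e. $N$ is $\mu^{-1}(L_0)$-perfect. The strong case runs the same way using part (3) of the element-characterization lemma: we need $a_i \in N$ (not just $M$) in the first step — which holds because $\lambda_2(N) = K_1$ and $K_1$ is strongly $J_1$-perfect, so we can take $a_i$'s lifting elements of $K_1$ via lifts inside $N$ — and then the $K_2$-correction term already has its "$a$" entries in $K_2 \subseteq N$.

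The main obstacle, and the one subtle point that must not be skipped, is the inclusion $J_2 \subseteq N$, which is exactly where the hypothesis that $\{L_i\}$ is a \emph{small} augmentation ($L_i \leq \ker\lambda_i$) gets used; without it the correction term coming from $K_2$-perfectness would land in $\lambda_2^{-1}(L_1)$ but there would be no reason for that to sit inside $\ker\mu$ or $\mu^{-1}(L_0)$, and the argument would collapse. A secondary bookkeeping point is making sure the augmentation subsequence $\{L_{k_i}\}$ is the correct one — i.e. that for the composed bond the relevant "$J$" is $\mu^{-1}(L_{k_{i-1}})$ with the index taken from the subsequence, not from the original sequence — but this is precisely how $\{L_{k_i}\}$ is defined in the paragraph preceding the proposition, so it is automatic. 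I would also remark (or leave to the reader) that one can alternatively package the whole computation via Lemma~\ref{Lemma: group homology}: $J_1$-perfectness of $K_1$ says $q_\ast : H_1(J_1) \cong H_1(J_1/K_1)$, and chasing the two five-term sequences for $K_2 \trianglelefteq M$ and for $N \trianglelefteq M$ against each other gives $H_1(M) \cong H_1(M/N)$, which is $\mu^{-1}(L_0)$-perfectness of $N$; and the strong version follows by additionally tracking surjectivity on $H_2$ through the same diagram. Either route is short; the element-wise one is the most transparent and is the one I would write out.
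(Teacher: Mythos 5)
Your overall strategy is essentially the paper's: reduce to a single composed bond, lift a commutator expression for the image through the last (surjective) bonding map, and absorb the discrepancy---which lies in the kernel of that last map---using the perfectness hypothesis for that single bond. The paper organizes this as an induction that peels off $\lambda_c$ from $\lambda_{b+1,c}=\lambda_{b+1,c-1}\circ\lambda_c$ rather than deleting interior terms one at a time, but that is a cosmetic difference.

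There is, however, one genuine error, and it is precisely at the point you single out as ``the one subtle point that must not be skipped.'' Proposition \ref{Prop: nearly sub} is stated for an \emph{arbitrary} augmentation, not a small one, and the paper later applies it to non-small augmentations (e.g.\ the maximal augmentation $L_i=H_i$ in the proof of Theorem \ref{Theorem: example}). Smallness is not available as a hypothesis---but your argument does not actually need it. What you must verify is not $J_2\subseteq N$ but only $J_2=\lambda_2^{-1}(L_1)\subseteq M=\lambda_2^{-1}\left(\lambda_1^{-1}(L_0)\right)$, and this is equivalent to $\lambda_1(L_1)\leq L_0$, which is part of the definition of an augmentation. (In the strong case the first entries of the correction commutators lie in $K_2=\ker\lambda_2\subseteq\ker\mu=N$ for free, so nothing further is needed there either.) As written, your proof establishes the proposition only for small augmentations, and your assertion that the argument ``would collapse'' without smallness is false; replace the appeal to $L_1\leq\ker\lambda_1$ by the augmentation axiom $\lambda_1(L_1)\leq L_0$ and the proof is correct in full generality.
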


\begin{proof}
Since the proofs for perfectness and strong perfectness are similar, we prove
only the latter. Assume $\left\{  G_{i},\lambda_{i}\right\}  $ augmented by
$\left\{  L_{i}\right\}  $ satisfies strong $\left\{  L_{i}\right\}
$-perfectness.
Simplifying notation, a portion of the given subsequence becomes%
\[
G_{a}\overset{\lambda_{a+1,b}}{\longleftarrow}G_{b}\overset{\lambda_{b+1,c}%
}{\longleftarrow}G_{c}%
\]
where $-1\leq a<b<c$. We must show that $\ker\left(  \lambda_{b+1,c}\right)
\subseteq\left[  \ker\left(  \lambda_{b+1,c}\right)  ,\lambda_{b+1,c}%
^{-1}\left(  L_{b}\right)  \right]  $.

Suppose the proposition holds for $j<c$. If $c=b+1$, then $\lambda
_{b+1,c}=\lambda_{c}$, and the result follows by hypothesis.
Now, assume $c\geq b+2$ and write
\[
\lambda_{b+1,c}=\lambda_{b+1,c-1}\circ\lambda_{c}:G_{c}\rightarrow
G_{c-1}\rightarrow G_{b}\text{.}%
\]
Let $\omega\in\ker\left(  \lambda_{b+1,c}\right)  $; then $\lambda_{c}\left(
\omega\right)  \in\ker\left(  \lambda_{b+1,c-1}\right)  $. By induction,
$\ker\left(  \lambda_{b+1,c-1}\right)  \subseteq$\newline$\left[  \ker\left(
\lambda_{b+1,c-1}\right)  ,\lambda_{b+1,c-1}^{-1}\left(  L_{b}\right)
\right]  $; so, $\lambda_{c}(\omega)$ is a product of commutators $[\alpha
_{m},\beta_{m}]$ where $\beta_{m}\in\lambda_{b+1,c-1}^{-1}\left(
L_{b}\right)  $ and
$\alpha_{m}^{\text{\ }}\in\allowbreak\ker\left(  \lambda_{b+1,c-1}\right)  $.
Since $\lambda_{c}$ is surjective over $G_{c-1}$ we identify for each $m$ a
pair of elements $\alpha_{m}^{\,\prime},\beta_{m}^{\,\prime}\in G_{c}$ that
map to $\alpha_{m}$ and $\beta_{m}$, respectively. Thus, $\beta_{m}^{\,\prime
}\in\lambda_{b+1,c}^{-1}\left(  L_{b}\right)  $,
$\alpha_{m}^{\,\prime}\in\ker\left(  \lambda_{b+1,c}\right)  $, and
$[\alpha_{m}^{\,\prime},\beta_{m}^{\,\prime}]\in\lbrack\ker\left(
\lambda_{b+1,c}\right)  ,\lambda_{b+1,c}^{-1}\left(  L_{b}\right)  ]$.

Now, let $\nu$ be the product of the commutators with $\left[  \alpha
_{m}^{\,\prime},\beta_{m}^{\,\prime}\right]  $ replacing $\left[  \alpha
_{m},\beta_{m}\right]  $. By construction, $\lambda_{c}(\omega)=\lambda
_{c}(\nu)$ and
$\nu\in\lbrack\ker\left(  \lambda_{b+1,c}\right)  ,\lambda_{b+1,c}^{-1}\left(
L_{b}\right)  ]$. Thus,%
\[
\omega v^{-1}\in\ker\left(  \lambda_{c}\right)  \subseteq\left[  \ker\left(
\lambda_{c}\right)  ,\lambda_{c}^{-1}\left(  L_{c-1}\right)  \right]
\subseteq\left[  \ker\left(  \lambda_{b+1,c}\right)  ,\lambda_{b+1,c}%
^{-1}\left(  L_{b}\right)  \right]  \text{.}%
\]
Consequently,
$\omega\in\lbrack\ker\left(  \lambda_{b+1,c}\right)  ,\lambda_{b+1,c}%
^{-1}\left(  L_{b}\right)  ]$ as well, completing the proof of the proposition.
\end{proof}

\subsection{Topology of ends of
manifolds\label{Subsection: Topology of ends of manifolds}}

Next we supply some topological definitions and background. Throughout the
paper, $\approx$ represents homeomorphism and $\simeq$ indicates homotopic
maps or homotopy equivalent spaces. The word \emph{manifold} means
\emph{manifold with (possibly empty) boundary}. A manifold is \emph{open} if
it is non-compact and has no boundary. As noted earlier, we restrict our
attention to manifolds with compact boundaries.

For convenience, all manifolds are assumed to be PL; analogous results may be
obtained for smooth or topological manifolds in the usual ways. Our standard
resource for PL topology is \cite{RS}. Some of the results presented here are
valid in all dimensions. Others are valid in dimensions $\geq4$ or $\geq5$,
but require the purely topological $4$-dimensional techniques found in
\cite{FQ} for the $4$ and/or $5$ dimensional cases; there the conclusions are
only topological. The main focus of this paper is on dimensions $\geq6$.

Let $M^{n}$ be a manifold with compact (possibly empty) boundary. A set
$N\subseteq M^{n}$ is a \emph{neighborhood of infinity} if $\overline{M^{n}%
-N}$ is compact. A neighborhood of infinity $N$ is \emph{clean} if

\begin{itemize}
\item $N$ is a closed subset of $M^{n}$,

\item $N\cap\partial M^{n}=\emptyset$, and

\item $N$ is a codimension 0 submanifold of $M^{n}$ with bicollared boundary.
\end{itemize}

\noindent It is easy to see that each neighborhood of infinity contains a
clean neighborhood of infinity.

We say that $M^{n}$ \emph{has }$k$ \emph{ends }if it contains a compactum $C$
such that, for every compactum $D$ with $C\subseteq D$, $M^{n}-D$ has exactly
$k$ unbounded components, i.e., $k$ components with noncompact closures. When
$k$ exists, it is uniquely determined; if $k$ does not exist, we say $M^{n}$
\emph{has infinitely many ends}. If $M^{n}$ is $k$-ended, then it contains a
clean neighborhood of infinity $N$ consisting of $k$ connected components,
each of which is a 1-ended manifold with compact boundary. Thus, when studying
manifolds with finitely many ends, it suffices to understand the $\emph{1}%
$\emph{-ended} situation. That is the case in this paper, where our standard
hypotheses ensure finitely many ends. (See Theorem ~\ref{semistable}.)

A connected clean neighborhood of infinity with connected boundary is called a
\emph{0-neigh\-bor\-hood of infinity}. A $0$-neighborhood of infinity $N$ for
which $\partial N\hookrightarrow N$ induces a $\pi_{1}$-isomorphism is called
a \emph{generalized 1-neighborhood of infinity}. If, in addition, $\pi
_{j}\left(  N,\partial N\right)  =0$ for $j\leq k$, then $N$ is a
\emph{generalized k-neighborhood of infinity}.

A nested sequence $N_{0}\supset N_{1}\supset N_{2}\supset\cdots$ of
neighborhoods of infinity is \emph{cofinal }if $\bigcap_{i=0}^{\infty}%
N_{i}=\emptyset$. We will refer to any cofinal sequence $\left\{
N_{i}\right\}  $ of closed neighborhoods of infinity with $N_{i+1}\subseteq
int\left(  N_{i}\right)  $, for all $i$, as an \emph{end structure }for
$M^{n}$. Descriptors will be added to indicate end structures with additional
properties. For example, if each $N_{i}$ is clean we call $\left\{
N_{i}\right\}  $ a \emph{clean end structure}; if each $N_{i}$ is clean and
connected we call $\left\{  N_{i}\right\}  $ a \emph{clean connected end
structure}; and if each $N_{i}$ is a generalized $k$-neighborhood of infinity,
we call $\left\{  N_{i}\right\}  $ a \emph{generalized k-neighborhood end
structure}.

\begin{remark}
\emph{The word `generalized' in the above definitions is in deference to
Siebenmann's terminology in \cite{Si} where the ambient manifold }$M^{n}%
$\emph{ is assumed to have stable fundamental group at infinity. There a
(non-generalized) k-neighborhood of infinity\ }$N$\emph{ is also required to
satisfy }$\pi_{1}\left(  \varepsilon\left(  M^{n}\right)  \right)
\overset{\cong}{\longrightarrow}\pi_{1}\left(  N\right)  $\emph{.}
\end{remark}

Building upon the above terminology, the primary goal of this paper can be
described as: Identify, construct, and detect the existence of various end
structures for manifolds. A central example---the \emph{pseudo-collar }can be
described as an end structure $\left\{  N_{i}\right\}  $ where each $N_{i}$ is
a homotopy collar.

We say $M^{n\text{ }}$is \emph{inward tame} if, for arbitrarily small
neighborhoods of infinity $N$, there exist homotopies $H:N\times\left[
0,1\right]  \rightarrow N$ such that $H_{0}=\operatorname*{id}{}_{N}$ and
$\overline{H_{1}\left(  N\right)  }$ is compact. Thus inward tameness means
each neighborhood of infinity can be pulled into a compact subset of itself.
In this case we refer to $H$ as a \emph{taming homotopy}.

In \cite{Gu1}, the existence of generalized $\left(  n-3\right)
$-neighborhood end structures is shown for all inward tame $M^{n}$ ($n\geq5$).

Recall that a space $X$ is \emph{finitely dominated} if there exists a finite
complex $K$ and maps $u:X\rightarrow K$ and $d:K\rightarrow X$ such that
$d\circ u\simeq\operatorname*{id}{}_{X}$. The following lemma uses this notion
to offer equivalent formulations of inward tameness.

\begin{lemma}
\label{Lemma: inward tame} (See \cite[Lemma 2.4]{GT1}) For a manifold $M^{n}$,
the following are equivalent.

\begin{enumerate}
\item $M^{n}$ is inward tame.

\item Each clean neighborhood of infinity in $M^{n}$ is finitely dominated.

\item For each clean end structure $\left\{  N_{i}\right\}  $, the inverse
sequence
\[
N_{0}\overset{j_{1}}{\hookleftarrow}N_{1}\overset{j_{2}}{\hookleftarrow}%
N_{2}\overset{j_{3}}{\hookleftarrow}\cdots
\]
is pro-homotopy equivalent to an inverse sequence of finite polyhedra.
\end{enumerate}
\end{lemma}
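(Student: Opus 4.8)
The plan is to prove $(1)\Leftrightarrow(2)$, then $(2)\Rightarrow(3)$, then $(3)\Rightarrow(1)$, which closes the loop. The equivalence $(1)\Leftrightarrow(2)$ amounts to reconciling two formulations of inward tameness, and I would dispatch it first. If a clean neighborhood of infinity $N$ admits a taming homotopy $H$, then $\overline{H_1(N)}$ is a compact subset of the PL manifold $N$, so it lies in a compact PL submanifold (a regular neighborhood) $P\subseteq N$; since $P$ is a finite polyhedron and the triple consisting of $P$, the corestriction of $H_1$, and the inclusion satisfies $H_1\simeq H_0=\operatorname{id}_N$, the neighborhood $N$ is finitely dominated. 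Conversely, if $N$ is finitely dominated by $(P,u,d)$, then the homotopy realizing $d\circ u\simeq\operatorname{id}_N$ has image inside the compact set $d(P)$, so it is a taming homotopy. Matching these with the ``arbitrarily small'' clause in the definition uses only that the boundary of a clean neighborhood of infinity is bicollared---so that the inclusion of a smaller clean neighborhood $N'\subseteq N$ is a cofibration---together with compactness of $\overline{N\setminus N'}$: a taming homotopy on $N'$ extends over $N$ by the homotopy extension property, its endpoint still having relatively compact image since the extension is performed over a compact region. This is the content of \cite[Lemma 2.4]{GT1} and related statements in \cite{Gu1}.

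For $(2)\Rightarrow(3)$, I would fix a clean end structure $\{N_i\}$ with bonding inclusions $j_i\co N_i\hookrightarrow N_{i-1}$. By $(2)$ each $N_i$ is finitely dominated, and after replacing the dominating finite CW complexes by homotopy equivalent finite polyhedra we get finite polyhedra $P_i$ with $u_i\co N_i\to P_i$, $d_i\co P_i\to N_i$, and $d_i\circ u_i\simeq\operatorname{id}_{N_i}$. Set $g_{i+1}=u_i\circ j_{i+1}\circ d_{i+1}\co P_{i+1}\to P_i$. Then $\{P_i,g_i\}$ is an inverse sequence of finite polyhedra, and the claim is that the level families $\{u_i\}$ and $\{d_i\}$ are mutually inverse pro-homotopy equivalences between $\{N_i,j_i\}$ and $\{P_i,g_i\}$. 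Every required homotopy commutativity follows formally from $d_i\circ u_i\simeq\operatorname{id}_{N_i}$: one verifies $g_{i+1}\circ u_{i+1}\simeq u_i\circ j_{i+1}$ and $d_i\circ g_{i+1}\simeq j_{i+1}\circ d_{i+1}$, so $\{u_i\}$ and $\{d_i\}$ are pro-morphisms; the composite $\{d_i\}\circ\{u_i\}$ is pro-homotopic to $\operatorname{id}_{\{N_i\}}$ level-wise; and the composite $\{u_i\}\circ\{d_i\}$ is pro-homotopic to $\operatorname{id}_{\{P_i\}}$ because $u_i\circ d_i\circ g_{i+1}\simeq g_{i+1}$ for all $i$, which is precisely what is needed to identify two level maps as the same pro-morphism after a one-step shift of the index.

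For $(3)\Rightarrow(1)$, I would choose any clean end structure $\{N_i\}$; by $(3)$ the tower $\{N_i,j_i\}$ is pro-homotopy equivalent to a tower $\{P_i,h_i\}$ of finite polyhedra. Unwinding the definition of pro-homotopy equivalence (equivalently, using the interleaving description of pro-isomorphism of towers), for each $i$ there exist $m>i$, a finite polyhedron $K$ among the $P_\bullet$, and maps $u\co N_m\to K$, $d\co K\to N_i$ such that $d\circ u$ is homotopic to the inclusion $N_m\hookrightarrow N_i$; in other words this inclusion factors up to homotopy through a finite polyhedron. Let $H\co N_m\times[0,1]\to N_i$ realize this homotopy, with $H_0$ the inclusion and $H_1=d\circ u$, so $H_1(N_m)\subseteq d(K)$ is relatively compact. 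Since $N_m$ is clean, $N_m\hookrightarrow N_i$ is a cofibration, so $H$ extends to $\widehat H\co N_i\times[0,1]\to N_i$ with $\widehat H_0=\operatorname{id}_{N_i}$. As $\overline{N_i\setminus N_m}$ is compact, $\widehat H_1\bigl(\overline{N_i\setminus N_m}\bigr)$ is compact, whence $\overline{\widehat H_1(N_i)}\subseteq\widehat H_1\bigl(\overline{N_i\setminus N_m}\bigr)\cup d(K)$ is compact; so $\widehat H$ is a taming homotopy for $N_i$. Since $\{N_i\}$ is cofinal, $M^n$ is inward tame.

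The main obstacle is the pro-category bookkeeping rather than any geometry. In $(2)\Rightarrow(3)$ the delicate point is verifying that $\{u_i\}$ and $\{d_i\}$ are genuinely mutually inverse in pro-homotopy---the identification of $\{u_i\}\circ\{d_i\}$ with $\operatorname{id}_{\{P_i\}}$ must exploit a shift in indexing, not a level-wise homotopy---and in $(3)\Rightarrow(1)$ one must carefully extract from the abstract pro-equivalence the concrete fact that a sufficiently deep bond $N_m\hookrightarrow N_i$ factors through a finite polyhedron. Once those are in place, the geometric ingredients (compact subsets of a PL manifold lie in compact subpolyhedra, bicollared boundaries yield cofibrations, and the region between two neighborhoods of infinity is compact) finish the argument with no surprises.
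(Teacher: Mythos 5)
Your proof is correct and follows essentially the standard argument; the paper itself offers no proof of this lemma, deferring to \cite[Lemma 2.4]{GT1}, and your route---compact subsets of a PL manifold lie in compact subpolyhedra, the homotopy extension property across the compact region between nested clean neighborhoods of infinity, and the interleaving description of pro-homotopy equivalence---is the same one used there. The only nit is in the converse half of $(1)\Leftrightarrow(2)$: it is the time-one map $H_{1}=d\circ u$, not the whole homotopy, whose image lies in $d(P)$, but since the taming condition only constrains $\overline{H_{1}(N)}$ this changes nothing.
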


Given a clean connected end structure $\left\{  N_{i}\right\}  _{i=0}^{\infty
}$, base points $p_{i}\in N_{i}$, and paths $\alpha_{i}\subseteq N_{i}$
connecting $p_{i}$ to $p_{i+1}$, we obtain an inverse sequence:
\[
\pi_{1}\left(  N_{0},p_{0}\right)  \overset{\lambda_{1}}{\longleftarrow}%
\pi_{1}\left(  N_{1},p_{1}\right)  \overset{\lambda_{2}}{\longleftarrow}%
\pi_{1}\left(  N_{2},p_{2}\right)  \overset{\lambda_{3}}{\longleftarrow}%
\cdots.
\]
\smallskip Here, each $\lambda_{i+1}:\pi_{1}\left(  N_{i+1},p_{i+1}\right)
\rightarrow\pi_{1}\left(  N_{i},p_{i}\right)  $ is the homomorphism induced by
inclusion followed by the change of base point isomorphism determined by
$\alpha_{i}$. The singular ray obtained by piecing together the $\alpha_{i}$'s
is called the \emph{base ray }for the inverse sequence. Provided the sequence
is semistable, its pro-isomorphism class does not depend on any of the choices
made above (see \cite{Gu3} or \cite[\S 16.2]{Ge}). In the absence of
semistability, the pro-isomorphism class of the inverse sequence depends on
the base ray; hence, the ray becomes part of the data. The same procedure may
be used to define $\pi_{k}\left(  \varepsilon\left(  M^{n}\right)  \right)  $
for all $k\geq1$. Similarly, but without need for a base ray or connectedness,
we may define $H_{k}\left(  \varepsilon\left(  M^{n}\right)  \right)  $.

In \cite{Wa}, Wall showed that each finitely dominated connected space $X$
determines a well-defined $\sigma\left(  X\right)  \in\widetilde{K}_{0}\left(
%
\mathbb{Z}
\left[  \pi_{1}X\right]  \right)  $ (the reduced projective class group) that
vanishes if and only if $X$ has the homotopy type of a finite complex. Given a
clean connected end structure $\left\{  N_{i}\right\}  _{i=0}^{\infty}$ for an
inward tame $M^{n}$, we have a Wall finiteness obstruction $\sigma(N_{i})$ for
each $i$. These may be combined into a single obstruction
\[
\sigma_{\infty}(M^{n})=\left(  -1\right)  ^{n}(\sigma(N_{0}),\sigma
(N_{1}),\sigma(N_{2}),\cdots)\in\widetilde{K}_{0}\left(  \pi_{1}\left(
\varepsilon\left(  M^{n}\right)  \right)  \right)  \equiv\underleftarrow{\lim
}\widetilde{K}_{0}\left(
\mathbb{Z}
\left[  \pi_{1}N_{i}\right]  \right)
\]
that is well-defined and which vanishes if and only if each clean neighborhood
of infinity in $M^{n}$ has finite homotopy type. See \cite{CS} or \cite{Gu1}
for details.

We may now state the full version of the main theorem of \cite{GT2}.

\begin{theorem}
[Pseudo-collarability Characterization---complete version]%
\label{PCT technical version}A 1-ended $n$-manifold $M^{n}$ ($n\geq6$) with
compact boundary is pseudo-collarable if and only if the following conditions hold:

\begin{enumerate}
\item $M^{n}$ is inward tame,

\item $\pi_{1}(\varepsilon(M^{n}))$ is $\mathcal{P}$-semistable, and

\item $\sigma_{\infty}\left(  M^{n}\right)  =0\in\widetilde{K}_{0}\left(
\pi_{1}\left(  \varepsilon(M^{n}\right)  \right)  )$.\bigskip
\end{enumerate}
\end{theorem}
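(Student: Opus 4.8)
The plan is to prove the two implications separately, with essentially all the effort going into sufficiency.

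For necessity, suppose $M^n$ carries a cofinal sequence $\{N_i\}$ of homotopy collar neighborhoods of infinity, each $N_{i+1}\subseteq\operatorname{int}N_i$. Each $N_i$ is homotopy equivalent to the closed $(n-1)$-manifold $\partial N_i$, hence has finite homotopy type; by \lemref{Lemma: inward tame} this already gives inward tameness, and since every $\sigma(N_i)$ vanishes, $\sigma_\infty(M^n)=0$. For $\mathcal{P}$-semistability I would observe that, because $\partial N_{i+1}\hookrightarrow N_{i+1}$ is a homotopy equivalence (and a cofibration), the gluing lemma makes $N_i-\operatorname{int}N_{i+1}\hookrightarrow N_i$ a homotopy equivalence; combined with $\partial N_i\hookrightarrow N_i$ being one, $\partial N_i\hookrightarrow N_i-\operatorname{int}N_{i+1}$ is a homotopy equivalence, i.e. $W_i:=N_i-\operatorname{int}N_{i+1}$ is a one-sided $h$-cobordism. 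The structure theory of such cobordisms from \cite{GT1} then says $\pi_1(\partial N_{i+1})\to\pi_1(\partial N_i)$ is surjective with perfect kernel, and since $\pi_1(\partial N_j)\cong\pi_1(N_j)$ the fundamental group at infinity is $\mathcal{P}$-semistable (finite presentability being automatic for $\pi_1$ of a compact manifold).

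For sufficiency, assume (1)--(3). By the stated characterization of $\sigma_\infty$, conditions (1) and (3) together say every clean neighborhood of infinity has finite homotopy type. By \cite{Gu1}, inward tameness forces $M^n$ to have finitely many ends (here one) and to admit a cofinal sequence of generalized $(n-3)$-neighborhoods of infinity; since $n\geq 6$ these are connected, have connected boundary, and have $\pi_1$-isomorphism boundary inclusions. Invoking $\mathcal{P}$-semistability, I would pass to a subsequence and geometrically realize the required pro-isomorphism of $\pi_1$-systems by low-dimensional handle trading inside the intervening cobordisms (available since $n\geq 6$ and harmless for the generalized $(n-3)$-neighborhood property), arriving at a clean connected end structure $\{N_i\}$ whose $\pi_1$-at-infinity inverse sequence consists of finitely presented groups and surjections $\lambda_i$ with each $\ker\lambda_i$ perfect, and with every $N_i$ of finite homotopy type. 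The core is then an inductive geometric step, following \cite{GT1} and \cite{GT2}: given such a good neighborhood $N=N_i$, build $N'\subseteq\operatorname{int}N$ so that $N-\operatorname{int}N'$ is a one-sided $h$-cobordism ($\partial N\hookrightarrow N-\operatorname{int}N'$ a homotopy equivalence) while $N'$ remains a good neighborhood. Concretely, starting from $N_{i+1}\subseteq\operatorname{int}N_i$ with $\pi_1(N_{i+1})\twoheadrightarrow\pi_1(N_i)$ of perfect kernel $K$, one attaches to a slight push-in of $\partial N_i$ a finite collection of $2$-handles along loops normally generating $K$ (which are nullhomotopic in $N_i$) together with $3$-handles killing the resulting homology classes -- perfectness of $K$ being exactly what makes those classes die, as in the Quillen plus construction -- so that the new cobordism $W$ has $\partial_-W\hookrightarrow W$ a $\mathbb{Z}$-homology equivalence and, once $\pi_1$-surjectivity is also arranged, a homotopy equivalence, with $\pi_1(\partial_+W)$ the prescribed group; a further round of handle manipulation restores the generalized $(n-3)$-neighborhood property for the leftover piece $N'$. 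The finite-homotopy-type hypothesis is used throughout to keep the relevant Wall obstructions in $\widetilde{K}_0(\mathbb{Z}[\pi_1])$ equal to zero, so that all of this can be done with finitely many handles. Iterating gives a cofinal $N_0\supseteq N_1\supseteq\cdots$ with each $N_i-\operatorname{int}N_{i+1}$ a one-sided $h$-cobordism; for each $k$, induction on $k$ and the gluing lemma show $\partial N_i\hookrightarrow N_i-\operatorname{int}N_{i+k}$ is a homotopy equivalence, and passing to the union $N_i=\bigcup_k(N_i-\operatorname{int}N_{i+k})$ shows $\partial N_i\hookrightarrow N_i$ is a homotopy equivalence, so each $N_i$ is a homotopy collar and $M^n$ is pseudo-collarable.

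The hard part will be that inductive geometric step: performing the $2$- and $3$-handle surgery so as to leave the homotopy type on the $\partial_-$ side untouched while installing the prescribed fundamental group on the $\partial_+$ side, then cleaning up to restore high connectivity rel boundary, and doing all of this so that the accumulated finiteness obstruction stays zero. This is precisely where the algebra (perfectness controlling exactly the homology that plus-construction-type surgery can fix) has to be matched to the geometry (handle dimensions and Whitney tricks needing $n\geq 6$), and it is why the one-sided $h$-cobordism theory of \cite{GT1} must be developed first and fed in here.
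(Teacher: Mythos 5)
Your outline is correct and follows the same program the paper relies on. Note first that the paper itself offers no independent proof of Theorem \ref{PCT technical version}: it is quoted as the main theorem of \cite{GT2}, and the authors observe afterward that their proof of Theorem \ref{NPCT technical version}, specialized to the trivial augmentation, yields an alternative proof. Your necessity argument is exactly the standard one (finite homotopy type of each $N_{i}\simeq\partial N_{i}$ gives (1) and (3); the gluing/two-out-of-three argument exhibits each $W_{i}$ as a one-sided h-cobordism, and Theorem \ref{one-sided} gives surjectivity with perfect kernel). Your sufficiency sketch correctly assembles the Siebenmann-style pipeline — geometric realization of the $\pi_{1}$-system, generalized $(n-3)$-neighborhoods, freeness of $H_{n-2}(\widetilde{N}_{i},\partial\widetilde{N}_{i})$ from $\sigma_{\infty}=0$ — and correctly isolates the one genuinely hard step. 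The only substantive difference is in how that step is executed: your description (attach $2$-handles along normal generators of the perfect kernel, then $3$-handles to cancel the resulting homology, plus-construction style) is closer to the \cite{GT2} strategy of eliminating the offending $(n-2)$-handles; the proof given in this paper instead follows \cite{Gu1} by \emph{adding} new $(n-3)$-handles (dually, $3$-handles attached along $2$-spheres made to contract via the spherical alterations of \cite{GT3}) whose images under $\partial$ trivialize $\ker\partial$. Both routes turn on the same algebra — perfectness of $\ker\lambda_{i}$ is precisely what makes the dual $2$-spheres findable/contractible — so your proposal is a faithful blueprint; the deferred handle-theoretic details are exactly the content of \cite{GT2} and \cite{GT3}, and deferring them is consistent with what the paper itself does.
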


\section{Some consequences of inward tameness\label{consequences}}

In this section we show that, for manifolds with compact boundary, the inward
tameness condition, by itself, has significant implications. The main goal is
a proof of Theorem \ref{Theorem: inward tame implies AP-semistability}---that
every inward tame manifold with compact boundary has $\mathcal{AP}$-semistable
fundamental group at each of its finitely many ends. Results in this section
are valid in all (finite) dimensions.

Begin by recalling a theorem from \cite{GT1}.

\begin{theorem}
\label{semistable}If an $n$-manifold with compact (possibly empty) boundary is
inward tame, then it has finitely many ends, each of which has semistable
fundamental group and stable homology in all dimensions.\medskip
\end{theorem}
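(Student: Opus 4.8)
The plan is to prove the three assertions in turn---finitely many ends, semistable fundamental pro-group at each end, and stable homology pro-groups at each end---with the latter two reduced to the $1$-ended case.

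\textbf{Finiteness of the set of ends.} Suppose, for contradiction, that $M^n$ has infinitely many ends. Then, by definition, for every $m$ there is a compact codimension-$0$ submanifold $D\supseteq\partial M^n$ of $M^n$ with bicollared boundary such that $M^n-D$ has at least $m$ components with noncompact closure; consequently the clean neighborhood of infinity $N:=\overline{M^n-D}$ has at least $m$ components. But $N$ is finitely dominated by \lemref{Lemma: inward tame}, so $H_0(N;\mathbb{Z})$ is finitely generated and $N$ has only finitely many components---impossible once $m$ is large. Hence $M^n$ has finitely many ends, and (as recorded in \secref{Subsection: Topology of ends of manifolds}) it admits a clean neighborhood of infinity whose finitely many components are $1$-ended manifolds with compact boundary. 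Each such component is again inward tame, so it suffices to prove the remaining statements for a $1$-ended $M^n$.

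\textbf{The soft input.} Fix a clean connected end structure $\{N_i\}$. By \lemref{Lemma: inward tame}(3) the tower $N_0\hookleftarrow N_1\hookleftarrow\cdots$ is pro-homotopy equivalent to a tower of finite polyhedra. Applying $H_k(-;\mathbb{Z})$ shows that $H_k(\varepsilon(M^n))=\{H_k(N_i)\}$ is pro-isomorphic to an inverse sequence of finitely generated abelian groups, and applying $\pi_1$ (after passing to a subsequence) shows that the fundamental pro-group is pro-isomorphic to a tower of finitely presented groups. Neither conclusion follows from this alone: an inverse sequence of finitely generated abelian groups such as $\mathbb{Z}\xleftarrow{\times 2}\mathbb{Z}\xleftarrow{\times 2}\cdots$ need not even be semistable. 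So the manifold structure---and, crucially, the compactness of $\partial M^n$---must be brought in.

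\textbf{Upgrading via duality (the main obstacle).} The heart of the argument is to promote ``pro-finitely-generated'' to the exactness conclusions of the theorem. For the homology one feeds the clean neighborhoods of infinity $N_i$ (codimension $0$, compact boundary $\partial N_i$) into Poincar\'{e}--Lefschetz duality, twisting by the orientation character when $M^n$ is nonorientable: because $\partial N_i$ is compact, duality interchanges the inverse system $\{H_k(N_i,\partial N_i)\}$ with a direct system of finitely generated groups assembled from compactly supported cohomology, and chasing this identification through the long exact sequences of the pairs $(N_i,\partial N_i)$ forces each $\{H_k(N_i)\}$ to be pro-isomorphic to a sequence of isomorphisms. (The taming homotopies enter here as well: pulling each $N_i$ into a compactum makes the inclusion-induced maps eventually onto at the chain level, which pins down the relevant images.) With $\{H_1(N_i)\}$ known to be stable, semistability of the fundamental pro-group follows by a bootstrap---working one bond at a time in the tower of finitely presented groups, stability of the abelianizations together with a Mittag--Leffler argument on the finitely generated groups $\pi_1(N_i)$ upgrades the bonding maps, up to pro-isomorphism, to surjections. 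This passage from finite generation to exactness is the difficult and essential step: it is exactly where compactness of $\partial M^n$ is indispensable, and it is what fails for $n$-manifolds with noncompact boundary or for general finitely dominated CW ends.
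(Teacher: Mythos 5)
First, a point of context: the paper does not prove Theorem~\ref{semistable} at all --- it is recalled verbatim from \cite{GT1} (``Begin by recalling a theorem from \cite{GT1}''), so the only thing to compare your argument against is the proof in that earlier paper. Your first step (finiteness of the set of ends) is correct and is essentially the standard argument: arbitrarily many unbounded complementary components would force a clean neighborhood of infinity with arbitrarily many components, contradicting finite domination. Your second paragraph correctly isolates why finite domination alone is insufficient.

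The third paragraph, however, is where the theorem actually lives, and there both of its claims are asserted rather than proved, and one of them rests on an invalid deduction. (i) For homology stability, duality and the compactness of $\partial N_i$ are indeed the right ingredients, but the sentence ``chasing this identification \ldots forces each $\{H_k(N_i)\}$ to be pro-isomorphic to a sequence of isomorphisms'' is precisely the content of the theorem, not a proof of it; moreover the parenthetical role you assign to the taming homotopies is backwards. A taming homotopy shows that $N_j\hookrightarrow N_i$ factors, up to homotopy, through the compactum $\overline{H_1^j(N_j)}$, so it bounds the images $\operatorname{im}\bigl(H_k(N_j)\to H_k(N_i)\bigr)$ \emph{from above} by finitely generated groups; it does not make inclusion-induced maps ``eventually onto,'' which is the hard (Mittag--Leffler) direction of pushing classes \emph{toward} infinity. (ii) The claimed bootstrap from stability of $\{H_1(N_i)\}$ to semistability of $\{\pi_1(N_i)\}$ is not a valid step. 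Stable abelianizations carry no Mittag--Leffler information about $\pi_1$: a tower $G\overset{\phi}{\longleftarrow}G\overset{\phi}{\longleftarrow}\cdots$ with $G$ an infinite finitely presented perfect group and $\phi$ injective but not surjective has constant (trivial) $H_1$ yet strictly decreasing images, hence is not semistable. Nor is there a ``Mittag--Leffler argument on the finitely generated groups $\pi_1(N_i)$'' --- finitely generated groups do not satisfy the descending chain condition on subgroups, as your own example $\mathbb{Z}\overset{\times 2}{\longleftarrow}\mathbb{Z}\overset{\times 2}{\longleftarrow}\cdots$ already shows. The semistability conclusion in \cite{GT1} is obtained by a genuinely geometric argument in which the taming homotopies are applied to compact pieces such as $\partial N_j$ (compactness of the boundary is what makes their tracks compact) together with transversality on the bicollared frontiers --- the same circle of ideas that reappears in the proof of Proposition~\ref{Prop: near perfect 2} of this paper. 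That mechanism is the missing idea here, and it cannot be replaced by an algebraic deduction from $H_1$-data.
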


\begin{remark}
\emph{Note that }none\emph{ of the above conclusions is valid for Hilbert cube
manifolds, polyhedra, or manifolds with noncompact boundary. See, for example,
\cite[\S 4.5]{Gu3}.}
\end{remark}

As preparation for the proof of Theorem
\ref{Theorem: inward tame implies AP-semistability}, we look at an easier
result that follows directly from Theorem \ref{semistable}.

Let $M^{n}$ be an inward tame $n$-manifold with compact boundary. Since
$M^{n}$ is finite-ended, there is no loss of generality in assuming that
$M^{n}$ is 1-ended. By taking a product with $\mathbb{S}^{k}$ ($k\geq2$) if
necessary, we may arrange that $n\geq6$, without changing the fundamental
group at infinity. So, by the semistability conclusion of Theorem
~\ref{semistable} combined with the Generalized $1$-neighborhood Theorem
\cite[Th.4]{Gu1}, we may choose a generalized $1$-neighborhood end structure
$\left\{  N_{i}\right\}  $ for which each bonding map in the inverse sequence
\begin{equation}
\pi_{1}\left(  N_{0},p_{0}\right)  \overset{\lambda_{1}}{\twoheadleftarrow}%
\pi_{1}\left(  N_{1},p_{1}\right)  \overset{\lambda_{2}}{\twoheadleftarrow}%
\pi_{1}\left(  N_{2},p_{2}\right)  \overset{\lambda_{3}}{\twoheadleftarrow
}\cdots
\end{equation}
is surjective. Abelianization gives an inverse sequence%
\begin{equation}
H_{1}\left(  N_{0}\right)  \overset{\lambda_{1\ast}}{\twoheadleftarrow}%
H_{1}\left(  N_{1}\right)  \overset{\lambda_{2\ast}}{\twoheadleftarrow}%
H_{1}\left(  N_{2}\right)  \overset{\lambda_{3\ast}}{\twoheadleftarrow}%
\cdots\text{.} \label{Sequence: H_1 sequence}%
\end{equation}
which, by Theorem ~\ref{semistable}, is stable. It follows that all but
finitely many of the epimorphisms in (\ref{Sequence: H_1 sequence}) are
isomorphisms, so by omitting finitely many terms (then relabeling), we may
assume all bonds in (\ref{Sequence: H_1 sequence}) are isomorphisms. A
term-by-term application of Lemma ~\ref{Lemma: group homology} gives the following.

\begin{proposition}
\label{Prop: near perfect 1}Every 1-ended inward tame manifold $M^{n}$ with
compact boundary admits a generalized $1$-neighborhood end structure $\left\{
N_{i}\right\}  $ for which all bonding maps in the sequence $\left\{  \pi
_{1}\left(  N_{i},p_{i}\right)  ,\lambda_{i}\right\}  $ are surjective and
each $\ker\lambda_{i}$ is $\pi_{1}\left(  N_{i},p_{i}\right)  $-perfect; in
other words, if $\left\{  L_{i}=\pi_{1}\left(  N_{i},p_{i}\right)  \right\}  $
is the maximal augmentation, then $\left\{  \pi_{1}\left(  N_{i},p_{i}\right)
,\lambda_{i}\right\}  $ satisfies the $\left\{  L_{i}\right\}  $-perfectness property.
\end{proposition}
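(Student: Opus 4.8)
The plan is to unwind the definitions so that Proposition~\ref{Prop: near perfect 1} becomes a direct, essentially formal consequence of Theorem~\ref{semistable} together with the Generalized $1$-neighborhood Theorem of \cite{Gu1} and the homological characterization of $J$-perfectness in Lemma~\ref{Lemma: group homology}(2). Concretely, the statement to prove has two parts: (i) the existence of a generalized $1$-neighborhood end structure with surjective $\pi_1$-bonds, and (ii) the fact that, after discarding finitely many terms, each $\ker\lambda_i$ is $\pi_1(N_i,p_i)$-perfect. As the discussion preceding the statement already indicates, part (i) is exactly the content of combining semistability (Theorem~\ref{semistable}) with \cite[Th.~4]{Gu1}, after reducing to the $1$-ended case and, if necessary, crossing with $\mathbb{S}^k$ to arrange $n\geq 6$ (which changes neither the number of ends nor the pro-$\pi_1$ at infinity). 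So the real work is part (ii), and the whole proof is short.

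First I would fix the generalized $1$-neighborhood end structure $\{N_i\}$ and base ray provided by (i), giving the inverse sequence of surjections $\{\pi_1(N_i,p_i),\lambda_i\}$. Abelianizing term by term yields the inverse sequence (\ref{Sequence: H_1 sequence}) of abelian groups and epimorphisms $\lambda_{i\ast}\colon H_1(N_{i})\twoheadrightarrow H_1(N_{i-1})$; here I use Lemma~\ref{Lemma: induced surjections on H_2}(1)--(2) to identify $H_1(N_i;\mathbb{Z})$ with $H_1(\pi_1(N_i,p_i);\mathbb{Z})$ and to see that the abelianized bonds agree with the maps $\lambda_{i\ast}$ induced on first homology by inclusion. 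By the stable-homology conclusion of Theorem~\ref{semistable}, this sequence is stable; being also pro-epimorphic, stability forces all but finitely many of the $\lambda_{i\ast}$ to be isomorphisms. Passing to the tail and relabeling, I may assume every bond in (\ref{Sequence: H_1 sequence}) is an isomorphism on $H_1$.

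Now the term-by-term argument: fix $i$, set $K_i=\ker\lambda_i \trianglelefteq J_i:=\pi_1(N_i,p_i)$, and let $q\colon J_i\to J_i/K_i\cong\pi_1(N_{i-1},p_{i-1})$ be the projection (the isomorphism because $\lambda_i$ is surjective with kernel $K_i$). Under this identification $q_\ast\colon H_1(J_i;\mathbb{Z})\to H_1(J_i/K_i;\mathbb{Z})$ is precisely $\lambda_{i\ast}$, which we have arranged to be an isomorphism. By Lemma~\ref{Lemma: group homology}(2), the statement ``$q_\ast$ is an isomorphism on $H_1$'' is equivalent to ``$K_i$ is $J_i$-perfect.'' Hence each $K_i=\ker\lambda_i$ is $\pi_1(N_i,p_i)$-perfect, which is exactly the $\{L_i\}$-perfectness property for the maximal augmentation $L_i=\pi_1(N_i,p_i)$. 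This completes the proof.

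The only point that requires a little care — and the closest thing to an obstacle — is bookkeeping: after truncating to make the $H_1$-bonds isomorphisms I am working with a subsequence (really a tail) of the original end structure, so I must note that a tail of a generalized $1$-neighborhood end structure is again one and that surjectivity of the $\pi_1$-bonds is preserved under composition; both are immediate. Everything else is a direct appeal to the cited results, so no genuinely new ingredient is needed.
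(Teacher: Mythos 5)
Your proposal is correct and follows essentially the same route as the paper: reduce to the $1$-ended case, invoke Theorem~\ref{semistable} together with the Generalized $1$-neighborhood Theorem of \cite{Gu1} to get surjective $\pi_1$-bonds, use stability of $H_1(\varepsilon(M^n))$ to truncate so that the abelianized bonds are isomorphisms, and then apply Lemma~\ref{Lemma: group homology}(2) term by term. The only differences are expository (your explicit appeal to Lemma~\ref{Lemma: induced surjections on H_2} for the identification $H_1(N_i)\cong H_1(\pi_1(N_i))$, and the tail-bookkeeping remark), both of which the paper leaves implicit.
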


Theorem \ref{Theorem: inward tame implies AP-semistability} is a stronger
version of Proposition \ref{Prop: near perfect 1}. For clarity, we restate it
in a similar form.

\begin{proposition}
\label{Prop: near perfect 2}Every 1-ended inward tame manifold $M^{n}$ with
compact boundary admits a generalized $1$-neighborhood end structure $\left\{
N_{i}\right\}  $ for which all bonding maps in the sequence $\left\{  \pi
_{1}\left(  N_{i},p_{i}\right)  ,\lambda_{i}\right\}  $ are surjective and, if
we let $K_{i}=\ker\lambda_{i}$ for each $i\geq1$ (the standard augmentation),
then $K_{i}$ is $\lambda_{i}^{-1}\left(  K_{i-1}\right)  $-perfect for all
$i\geq2$. In other words, $\left\{  \pi_{1}\left(  N_{i},p_{i}\right)
,\lambda_{i}\right\}  $ satisfies the $\left\{  K_{i}\right\}  $-perfectness
property; so $M^{n}$ has $\mathcal{AP}$-semistable fundamental group at infinity.

\begin{proof}
Assume the sequence $\left\{  N_{i}\right\}  $ was chosen so that, for each
$i$, $N_{i+1}$ is sufficiently small that a taming homotopy $H^{i}$ pulls
$N_{i}$ into $A_{i}=N_{i}-\operatorname{int}N_{i+1}$, i.e., $\overline
{H_{1}^{i}\left(  N_{i}\right)  }\subseteq A_{i}$, and $N_{i+3}$ is
sufficiently small that $H^{i}\left(  \partial N_{i+2}\times\left[
0,1\right]  \right)  \cap N_{i+3}=\varnothing$. By compactness of
$\overline{H_{1}^{i}\left(  N_{i}\right)  }$ and $H^{i}\left(  \partial
N_{i+2}\times\left[  0,1\right]  \right)  $ those choices can be made.

Now let $i\geq2$ be fixed and $q_{i-2}:\widetilde{N}_{i-2}\rightarrow N_{i-2}$
be the universal covering projection. Let $\widetilde{A}_{i-2}=q_{i-2}%
^{-1}\left(  A_{i-2}\right)  $ and for $j>i-2$, $\widehat{N}_{j}=q_{i-2}%
^{-1}\left(  N_{j}\right)  $ and $\widehat{A}_{j}=p_{i-2}^{q-1}\left(
A_{j}\right)  .$ Then $\widetilde{N}_{i-2}\supset\widehat{N}_{i-1}%
\supset\widehat{N}_{i}\supset\widehat{N}_{i+1}$; and $H^{i-2}$ lifts to a
proper homotopy $\widetilde{H}^{i-2}$ that pulls $\widetilde{N}_{i-2}$ into
$\widetilde{A}_{i-2}$ and for which $\widetilde{H}^{i}\left(  \partial
\widehat{N}_{i}\times\left[  0,1\right]  \right)  $ misses $\widehat{N}_{i+1}%
$.

We may associate $\lambda_{i}^{-1}\left(  K_{i-1}\right)  $ with $\pi
_{1}\left(  \widehat{N}_{i}\right)  $ and $K_{i}$ with $\ker\left(  \pi
_{1}\left(  \widehat{N}_{i}\right)  \rightarrow\pi_{1}\left(  \widehat
{N}_{i-1}\right)  \right)  $. Thus, an arbitrary element of $K_{i}$ may be
viewed as a loop $\alpha$ in $\partial\widehat{N}_{i}$ that bounds a disk $D$
in $\widehat{A}_{i-1}$. To prove the Proposition, it suffices to show that
$\alpha$ bounds an orientable surface in $\widehat{N}_{i}$. By $\pi_{1}%
$-surjectivity and the fact that the $N_{j}$'s are generalized $1$%
-neighborhoods, $\alpha$ may be homotoped within $\widehat{A}_{i}$ to a loop
$\alpha_{0}$ in $\partial\widehat{N}_{i+1}$. Let $E$ be the cylinder in
$\widehat{A}_{i}$ between $\alpha$ and $\alpha_{0}$ traced out by that
homotopy. Then the disk $D\cup E$ may be viewed as an element $[\beta]\in
H_{2}\left(  \widehat{A}_{i}\cup\widehat{A}_{i-1},\partial\widehat{N}%
_{i+1}\right)  $. Let
\[
\widehat{f}:\partial\widehat{N}_{i}\times\lbrack0,1]\cup_{\partial\widehat
{N}_{i}\times\{0\}}\widehat{A}_{i}\rightarrow\widetilde{A}_{i-2}\cup
\widehat{A}_{i-1}\cup\widehat{A}_{i}%
\]
be the identity on $\widehat{A}_{i}$ and $\left.  \widetilde{H}^{i-2}%
\right\vert $ on $\partial\widehat{N}_{i}\times\lbrack0,1]$. By PL
transversality theory (see \cite{RS1} or \cite[\S II.4]{BRS}), we may---after
a small proper adjustment that does not alter $\widehat{f}$ on $(\partial
\widehat{N}_{i}\times\{0,1\})\cup\widehat{A}_{i}$---assume that $\widehat
{f}^{-1}\left(  \widehat{A}_{i-1}\cup\widehat{A}_{i}\right)  $ is a manifold
with boundary that is a homeomorphism over a collar neighborhood of
$\partial\widehat{N}_{i+1}$. Let $\widehat{C}$ be the component of
$\widehat{f}^{-1}\left(  \widehat{A}_{i-1}\cup\widehat{A}_{i}\right)  $
containing that neighborhood. Then, by local characterization of degree,
$\left.  \widehat{f}\right\vert :\widehat{C}\rightarrow\widehat{A}_{i-1}%
\cup\widehat{A}_{i}$ is a proper degree $1$ map, and $\left.  \widehat
{f}\right\vert ^{-1}\left(  \partial\widehat{N}_{i+1}\right)  =\partial
\widehat{N}_{i+1}$. Thus we have a surjection%
\[
\left.  \widehat{f}\right\vert _{\ast}:H_{2}\left(  \widehat{C},\partial
\widehat{N}_{i+1}\right)  \rightarrow H_{2}\left(  \widehat{A}_{i}\cup
\widehat{A}_{i-1},\partial\widehat{N}_{i+1}\right)  \text{.}%
\]
Let $\left[  \beta^{\prime}\right]  $ be a preimage of $\left[  \beta\right]
$. We may assume that $\beta^{\prime}$ is an orientable surface with boundary
in $\widehat{C}$. Since $\widehat{f}$ is the identity on $\partial\widehat
{N}_{i+1}$, $\partial\beta^{\prime}$ is homologous in $\partial\widehat
{N}_{i+1}$ to $\partial\beta=\alpha_{0}$. So, without loss of generality, we
man assume that $\partial\beta^{\prime}=\alpha_{0}$. Since $\widehat{C}$ lies
in $\partial\widehat{N}_{i}\times\lbrack0,1]\cup_{\partial\widehat{N}%
_{i}\times\{0\}}\widehat{A}_{i}$, we may push $\beta^{\prime}$, rel boundary,
into $\widehat{A}_{i}$. This provides an orientable surface in $\widehat
{A}_{i}$ with boundary $\alpha_{0}$. Gluing the cylinder $E$ to that surface
along $\alpha_{0}$ produces the bounding surface for $\alpha$ that we desire.
\end{proof}
\end{proposition}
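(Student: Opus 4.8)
The plan is to start from the end structure produced by Proposition~\ref{Prop: near perfect 1}, in which each $K_i=\ker\lambda_i$ is only known to be $\pi_1(N_i)$-perfect, and to sharpen this to ``$K_i$ is $J_i$-perfect'' for $J_i:=\lambda_i^{-1}(K_{i-1})$. The mechanism is to recast the relevant commutator condition geometrically via Lemma~\ref{Lemma: geometric perfect} and then pass to a covering space that promotes the subgroup $J_i$ to a full fundamental group. First I would, after discarding terms, arrange the $N_i$ so that for each $i$ a taming homotopy $H^{i-2}$ pulls $N_{i-2}$ into $A_{i-2}=N_{i-2}\setminus\operatorname{int}N_{i-1}$ and throws $\partial N_i\times[0,1]$ off of $N_{i+1}$; compactness of the tracks of the finitely many homotopies involved makes this possible. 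Then, fixing $i\geq2$, let $q_{i-2}\colon\widetilde N_{i-2}\to N_{i-2}$ be the universal cover and write $\widehat N_j=q_{i-2}^{-1}(N_j)$, $\widehat A_j=q_{i-2}^{-1}(A_j)$ for $j\geq i-1$ (and $\widetilde A_{i-2}=q_{i-2}^{-1}(A_{i-2})$). Because the bonds are surjective and the $N_j$ are generalized $1$-neighborhoods, each $\widehat N_j$ is connected, $\pi_1(\widehat N_i)\cong J_i$, and $K_i$ is identified with $\ker\bigl(\pi_1(\widehat N_i)\to\pi_1(\widehat N_{i-1})\bigr)$.

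With those identifications in place, Lemma~\ref{Lemma: geometric perfect}(2) reduces the proposition to a single geometric claim: \emph{every loop $\alpha$ in $\widehat N_i$ that bounds a (possibly singular) disk in $\widehat N_{i-1}$ already bounds a compact orientable surface inside $\widehat N_i$} --- once such a surface is found, every complete set of handle curves on it automatically represents elements of $\pi_1(\widehat N_i)=J_i$. Given such an $\alpha$, placed in $\partial\widehat N_i$ and bounding a disk $D\subset\widehat A_{i-1}$, I would first push $\alpha$ across the collar $\widehat A_i$ to a loop $\alpha_0\subset\partial\widehat N_{i+1}$ (possible since $\pi_1(\widehat N_{i+1})\to\pi_1(\widehat N_i)$ is onto and the $N_j$ are generalized $1$-neighborhoods), recording the cylinder $E\subset\widehat A_i$ swept out. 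Adjoining $E$ to $D$ yields a class $[\beta]\in H_2\bigl(\widehat A_{i-1}\cup\widehat A_i,\ \partial\widehat N_{i+1}\bigr)$, and the task becomes that of replacing this surface-with-boundary by one lying in $\widehat A_i\subset\widehat N_i$.

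To carry out that replacement I would invoke the lifted taming homotopy. Let $\widetilde H^{i-2}$ be the proper lift of $H^{i-2}$ pulling $\widetilde N_{i-2}$ into $\widetilde A_{i-2}$, and form
\[
\widehat f\colon\ \partial\widehat N_i\times[0,1]\ \cup_{\partial\widehat N_i\times\{0\}}\ \widehat A_i\ \longrightarrow\ \widetilde A_{i-2}\cup\widehat A_{i-1}\cup\widehat A_i,
\]
the identity on $\widehat A_i$ and $\widetilde H^{i-2}$ on the collar. By PL transversality (after an adjustment fixed on $(\partial\widehat N_i\times\{0,1\})\cup\widehat A_i$), $\widehat f^{-1}(\widehat A_{i-1}\cup\widehat A_i)$ is a codimension-$0$ submanifold-with-boundary that is a homeomorphism over a collar of $\partial\widehat N_{i+1}$; let $\widehat C$ be the component meeting that collar. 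A local-degree computation then shows $\widehat f|\colon\widehat C\to\widehat A_{i-1}\cup\widehat A_i$ is a proper degree-$1$ map with $\widehat f|^{-1}(\partial\widehat N_{i+1})=\partial\widehat N_{i+1}$, so that $H_2(\widehat C,\partial\widehat N_{i+1})\to H_2(\widehat A_{i-1}\cup\widehat A_i,\partial\widehat N_{i+1})$ is surjective. I would then pick a preimage of $[\beta]$, realize it by an orientable surface $\beta'\subset\widehat C$ with $\partial\beta'=\alpha_0$, push $\beta'$ (which lives in $\partial\widehat N_i\times[0,1]\cup\widehat A_i$) rel boundary into $\widehat A_i$, and reglue $E$ to obtain the desired orientable surface in $\widehat N_i$ bounded by $\alpha$.

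The main obstacle I anticipate is precisely this last step. The taming homotopy pushes in the ``wrong'' direction --- out of $\widehat N_{i-1}$, toward $\widehat A_{i-2}$ --- so there is no direct homotopy carrying $D$ into $\widehat N_i$; one has to manufacture the mapping-cylinder-type domain $\partial\widehat N_i\times[0,1]\cup\widehat A_i$ on which $\widetilde H^{i-2}$ reads as a proper degree-$1$ map, then combine PL transversality with the local-degree argument to get surjectivity on $H_2$ rel $\partial\widehat N_{i+1}$, all while keeping $\widehat C$ located so that the pulled-back surface can actually be slid back into $\widehat N_i$. Getting these point-set and geometric-topology ingredients to cohere is where the real work lies; the reduction to the covering space and the appeal to the geometric reformulation of perfectness (Lemma~\ref{Lemma: geometric perfect}) are comparatively routine.
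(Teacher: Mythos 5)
Your proposal is correct and follows essentially the same route as the paper's proof: the same preliminary arrangement of taming homotopies, the same passage to the cover of $N_{i-2}$ identifying $J_i$ with $\pi_1(\widehat N_i)$, the same reduction (via the geometric characterization of relative perfectness) to bounding $\alpha$ by an orientable surface in $\widehat N_i$, and the same transversality/degree-one argument on the mapping-cylinder domain to produce that surface. No substantive differences.
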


Early attempts to prove $\mathcal{P}$-semistability (hence
pseudo-collarability) with only an assumption of inward tameness, were brought
to a halt by the discovery of a key example presented in \cite{GT1}. Ideas
contained in that example play an important role here, so we provide a quick review.

An easy way to denote normal subgroups will be helpful. Let $G$ be a group and
$S\subseteq G$. The \emph{normal closure of S in G\/} is the smallest normal
subgroup of $G$ containing $S$. We denote it by $\operatorname*{ncl}(S,G)$.

\begin{example}
[Main Example from \cite{GT1}]\label{Example: 1} For all $n\geq6$, there exist
1-ended absolutely inward tame open $n$-manifolds with fundamental group
system%
\[
G_{0}\overset{\lambda_{1}}{\twoheadleftarrow}G_{1}\overset{\lambda_{2}%
}{\twoheadleftarrow}G_{2}\overset{\lambda_{3}}{\twoheadleftarrow}\cdots
\]
where
\[
G_{i}=\left\langle a_{0},a_{1},\cdots,a_{i}\mid a_{1}=[a_{1},a_{0}%
],a_{2}=\left[  a_{2},a_{1}\right]  ,\cdots,a_{i}=\left[  a_{i},a_{i-1}%
\right]  \right\rangle
\]
and $\lambda_{i}$ sends $a_{j}$ to $a_{j}$ for $0\leq j\leq i-1$ and $a_{i}$
to $1$.

By a largely algebraic argument, it was shown that these examples do not have
$\mathcal{P}$-semistable fundamental group at infinity, and thus, are not
pseudo-collarable. Notice, however, that each $K_{i}=\ker\lambda_{i}$ is the
normal closure of $a_{i}$ and $a_{i}=\left[  a_{i},a_{i-1}\right]  $ in
$G_{i}$; so $K_{i}\trianglelefteq\left[  K_{i},\lambda_{i}^{-1}\left(
K_{i-1}\right)  \right]  $. In other words, $\left\{  G_{i},\lambda
_{i}\right\}  $ satisfies the strong $\left\{  K_{i}\right\}  $-perfectness
property, and is therefore $\mathcal{SAP}$-semistable.

In addition to the above algebra, these examples have nice topological
properties. Although they do not contain small homotopy collar neighborhoods
of infinity, they do contain arbitrarily small generalized $1$-neighborhoods
of infinity $N$ for which $\partial N\hookrightarrow N$ is $%
\mathbb{Z}
$-homology equivalence. In fact, they contain a sequence $\left\{
N_{i}\right\}  $ of generalized $1$-neighborhoods of infinity with $\pi
_{1}\left(  N_{i}\right)  \cong G_{i}$ and $\partial N_{i}\hookrightarrow
N_{i}$ a $%
\mathbb{Z}
\lbrack G_{i-1}]$-homology equivalence.

These observations provide much of the motivation for the remainder of this paper.
\end{example}

\section{Generalizing one-sided h-cobordisms, homotopy collars and
pseudo-collars\label{Section: Generalizing 1-sided cobordisms, homotopy collars, and pseudocollars}%
}

We begin developing ideas for placing Example \ref{Example: 1} into a general
context. We will see that end structures like those found in that example are
possible only when kernels satisfy a strong relative perfectness condition.
Conversely, we will show that whenever such a group theoretic condition is
present, a corresponding `near pseudo-collar' structure is attainable.

We have already defined pseudo-collar structure on the end of a manifold
$M^{n}$ to be an end structure $\left\{  N_{i}\right\}  $ for which each
$N_{i}$ is a homotopy collar, i.e., each $\partial N_{i}\hookrightarrow N_{i}$
is a homotopy equivalence. The existence of such a structure allows us to
express each $N_{i}$ as a union%
\[
N_{i}=W_{i}\cup W_{i+1}\cup W_{i+2}\cup\cdots
\]
where $W_{i}=N_{i}-\operatorname*{int}N_{i+1}$, and each triple $\left(
W_{i},\partial N_{i},\partial N_{i+1}\right)  $ is a compact \emph{one-sided
h-cobordism} in the sense that $\partial N_{i}\hookrightarrow W_{i}$ is a
homotopy equivalence (and $\partial N_{i+1}\hookrightarrow W_{i}$ is probably
not). One-sided cobordisms play an important role in manifold topology in
general, and the topology of ends in particular. See \cite[\S 4]{Gu1} for
details. For later use, we review a few key properties of one-sided
h-cobordisms. See, for example, \cite[Theorem 2.5]{GT1}

\begin{theorem}
\label{one-sided}Let $(W,P,Q)$ be a compact cobordism between closed manifolds
with $P\hookrightarrow W$ a homotopy equivalence. Then

\begin{enumerate}
\item $P\hookrightarrow W$ and $Q\hookrightarrow W$ are $%
\mathbb{Z}
\left[  \pi_{1}\left(  W\right)  \right]  $-homology equivalences, i.e.,
\newline$H_{\ast}\left(  W,P;%
\mathbb{Z}
\left[  \pi_{1}\left(  W\right)  \right]  \right)  \allowbreak=\allowbreak
0\allowbreak=\allowbreak H_{\ast}\left(  W,Q;%
\mathbb{Z}
\left[  \pi_{1}\left(  W\right)  \right]  \right)  $

\item $\pi_{1}\left(  Q\right)  \rightarrow\pi_{1}\left(  W\right)  $ is
surjective, and $\allowbreak$

\item $K=\ker\left(  \pi_{1}\left(  Q\right)  \rightarrow\pi_{1}\left(
W\right)  \right)  $ is perfect \medskip
\end{enumerate}
\end{theorem}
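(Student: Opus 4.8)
The plan is to reduce everything to the standard machinery of one-sided $h$-cobordisms, exploiting the hypothesis that $P\hookrightarrow W$ is a homotopy equivalence to control the relative homology with group-ring coefficients. First I would establish (1). Since $P\hookrightarrow W$ is a homotopy equivalence, $H_*(W,P;\mathbb{Z}[\pi_1 W])=0$ is immediate by lifting to the universal cover: the pair $(\widetilde{W},\widetilde{P})$ has trivial homology because the inclusion lifts to a homotopy equivalence of universal covers. The content is the statement for $Q$. Here I would invoke Poincar\'e--Lefschetz duality for the compact cobordism $(W,P,Q)$ with local coefficients in $\mathbb{Z}[\pi_1 W]$: $H_k(W,Q;\mathbb{Z}[\pi_1 W])\cong H^{n-k}(W,P;\mathbb{Z}[\pi_1 W])$, where $n=\dim W$. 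Since the right-hand side vanishes for all $k$ (being the cohomology analogue of the already-established $H_*(W,P;\mathbb{Z}[\pi_1 W])=0$, via the universal coefficient spectral sequence over $\mathbb{Z}[\pi_1 W]$, or more simply because $\widetilde{P}\hookrightarrow\widetilde{W}$ is a homotopy equivalence so all cohomology of the pair vanishes), we conclude $H_*(W,Q;\mathbb{Z}[\pi_1 W])=0$.

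For (2), consider the exact sequence of the pair $(W,Q)$ with $\mathbb{Z}[\pi_1 W]$ coefficients, or more directly argue at the level of fundamental groups: let $q\co\pi_1(Q)\to\pi_1(W)$. Because $H_0(W,Q;\mathbb{Z}[\pi_1 W])=0$, the cover of $W$ corresponding to the image of $q$ must have $Q$-preimage nonempty and the whole cover connected in a way forcing surjectivity; alternatively, one sees surjectivity because $H_1(W,Q;\mathbb{Z}[\pi_1 W])=0$ combined with the standard fact that $\widetilde{W}$ (the universal cover of $W$) restricted over $Q$ is connected iff $q$ is onto. The cleanest route: the infinite cyclic-type argument is unnecessary—since $H_*(\widetilde{W},\widetilde{Q}')=0$ where $\widetilde{Q}'$ is the preimage of $Q$ in $\widetilde{W}$, in particular $H_0(\widetilde{W},\widetilde{Q}')=0$, which says $\widetilde{Q}'$ meets every component of $\widetilde{W}$; as $\widetilde{W}$ is connected this forces $\widetilde{Q}'$ connected, i.e. $q$ surjective.

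For (3), let $K=\ker q$. I would pass to the cover $\widehat{W}\to W$ corresponding to $\mathrm{im}(\pi_1(Q)\to\pi_1(W))=\pi_1(W)$—so $\widehat{W}=\widetilde{W}$, the universal cover—and let $\widehat{Q}$ be the preimage of $Q$; then $\pi_1(\widehat{Q})=K$ and $\widehat{Q}$ is connected by (2). The relative homology vanishing from (1) gives $H_*(\widetilde{W},\widehat{Q})=0$, so by excision/the long exact sequence $H_1(\widehat{Q})\to H_1(\widetilde{W})=0$, hence $H_1(\widehat{Q})$ injects into $H_1(\widetilde{W})=0$ modulo the image of $H_2(\widetilde{W},\widehat{Q})=0$; therefore $H_1(\widehat{Q};\mathbb{Z})=0$, which by Lemma \ref{Lemma: induced surjections on H_2}(1) and Lemma \ref{Lemma: group homology}(1) means $K=\pi_1(\widehat{Q})$ is perfect.

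The main obstacle I anticipate is bookkeeping the coefficient systems correctly in (1)—specifically, getting Poincar\'e--Lefschetz duality to convert the (easy) vanishing of $H_*(W,P;\mathbb{Z}[\pi_1 W])$ into vanishing of $H_*(W,Q;\mathbb{Z}[\pi_1 W])$, since this is where the ``two-sided'' nature of a cobordism interacts with the ``one-sided'' homotopy hypothesis. Once that duality step is in hand, parts (2) and (3) are formal consequences obtained by passing to the appropriate cover and reading off $H_0$ and $H_1$ of the relative pair. Since these are all standard facts in the literature on one-sided $h$-cobordisms, I would cite \cite[\S 4]{Gu1} or \cite[Theorem 2.5]{GT1} rather than reproduce the duality computation in full.
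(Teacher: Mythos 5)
The paper does not actually prove Theorem \ref{one-sided}; it cites \cite[Theorem 2.5]{GT1}. Your strategy is, however, essentially the one the authors use for the generalized statement, Theorem \ref{Thm: one-sided general}: convert the vanishing of $H_{\ast}\left(  W,P;\mathbb{Z}\left[  \pi_{1}W\right]  \right)$ into the vanishing of $H_{\ast}\left(  W,Q;\mathbb{Z}\left[  \pi_{1}W\right]  \right)$ by duality, then read off (2) and (3) from the homology of the pair $(\widetilde{W},p^{-1}(Q))$. Parts (2) and (3) are fine, with one slip of phrasing in (2): $H_{0}(\widetilde{W},p^{-1}(Q))=0$ says only that $p^{-1}(Q)$ is nonempty; connectedness of $p^{-1}(Q)$ (hence surjectivity of $\pi_{1}(Q)\rightarrow\pi_{1}(W)$) also requires $H_{1}(\widetilde{W},p^{-1}(Q))=0$, which you do have from (1).

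The step needing repair is your justification that $H^{n-k}\left(  W,P;\mathbb{Z}\left[  \pi_{1}W\right]  \right)  =0$. That group is $H^{\ast}\bigl(\operatorname{Hom}_{\mathbb{Z}\pi}(C_{\ast}(\widetilde{W},\widetilde{P}),\mathbb{Z}\pi)\bigr)$, i.e.\ the finite-cochain cohomology $H_{f}^{\ast}(\widetilde{W},\widetilde{P};\mathbb{Z})$ of the noncompact covers, not their ordinary cohomology. A homotopy equivalence $\widetilde{P}\hookrightarrow\widetilde{W}$ does not by itself kill finite-cochain cohomology, so the \emph{``more simply because \ldots all cohomology of the pair vanishes''} parenthetical is not a valid justification. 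Two correct repairs: (i) since $W$ is compact, $C_{\ast}(\widetilde{W},\widetilde{P})$ is a bounded acyclic complex of finitely generated free $\mathbb{Z}\pi$-modules, hence chain contractible, hence its $\operatorname{Hom}_{\mathbb{Z}\pi}$-dual is acyclic (this is the substance behind your appeal to universal coefficients over $\mathbb{Z}\pi$); or (ii) the route the paper takes in proving Theorem \ref{Thm: one-sided general}: by compactness the deformation retraction of $W$ onto $P$ is proper, lifts to a proper deformation retraction of covers, and proper homotopy equivalences do preserve $H_{f}^{\ast}$. With either repair in place, the argument is complete and matches the paper's method for the generalized version.
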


Moving forward, we require generalizations of the fundamental concepts:
homotopy equivalence, homotopy collar, one-sided h-cobordism and
pseudo-collar. They are as follows:\medskip

\begin{itemize}
\item Let $\left(  X,A\right)  $ be a CW-pair for which $i:A\hookrightarrow X$
induces a $\pi_{1}$-isomorphism, and let $L\trianglelefteq\pi_{1}\left(
A\right)  $. Call $i$ a $\left(  \operatorname{mod}L\right)  $\emph{-homotopy
equivalence} if $H_{\ast}\left(  X,A;%
\mathbb{Z}
\mathbb{[\pi}_{1}\left(  A\right)  /L]\right)  =0$ for all $\ast$. Extension
to arbitrary maps is accomplished by use of mapping cylinders.

\item A manifold $N$ with compact boundary is a $\left(  \operatorname{mod}%
L\right)  $\emph{-homotopy collar} if $L\trianglelefteq\pi_{1}\left(  \partial
N\right)  $ and $\partial N\hookrightarrow N$ is a $\left(  \operatorname{mod}%
L\right)  $-homotopy equivalence.

\item Let $\left(  W,P,Q\right)  $ be a compact cobordism between closed
manifolds and $L\trianglelefteq\pi_{1}\left(  W\right)  $. We call $\left(
W,P,Q\right)  $ a $\left(  \operatorname{mod}L\right)  $\emph{-one-sided
h-cobordism} if $i:P\hookrightarrow W$ is a $\left(  \operatorname{mod}%
L\right)  $-homotopy equivalence and $j:Q\hookrightarrow W$ induces a
surjection on fundamental groups.

\item Let $\left\{  N_{i}\right\}  $ be a generalized $1$-neighborhood end
structure on a manifold $M^{n}$, chosen so that the bonding maps in
\[
\pi_{1}\left(  N_{0}\right)  \overset{\lambda_{1}}{\twoheadleftarrow}\pi
_{1}\left(  N_{1}\right)  \overset{\lambda_{2}}{\twoheadleftarrow}\pi
_{1}\left(  N_{2}\right)  \overset{\lambda_{3}}{\twoheadleftarrow}%
\cdots\text{.}%
\]
are surjective, and let $\left\{  L_{i}\right\}  $ be an augmentation of this
sequence. Call $\left\{  N_{i}\right\}  $ a $\operatorname{mod}\left(
\left\{  L_{i}\right\}  \right)  $ \emph{pseudo-collar structure }if each
$\partial N_{i}\hookrightarrow N_{i}$ is a $\left(  \operatorname{mod}%
L_{i}\right)  $-homotopy equivalence.\medskip
\end{itemize}

\begin{remark}
\emph{i)}\textbf{ }\emph{Each of the above definitions reduces to its
traditional counterpart when the subgroup(s) involved are trivial.}

\emph{ii)} \emph{In the generalization of one-sided h-cobordism, we} require
$j_{\#}:\pi_{1}(Q)\rightarrow\pi_{1}(W)$\emph{ to be surjective---a condition
that is automatic when }$L=\left\{  1\right\}  $\emph{, but not in general.
Analogs of the other two assertions of Theorem \ref{one-sided} will be shown
to follow.}

\emph{iii)}\textbf{ }\emph{For the maximal augmentation, the generalization of
pseudo-collar requires only that} \emph{each }$\partial N_{i}\hookrightarrow
N_{i}$\emph{ be a }$%
\mathbb{Z}
$\emph{-homology equivalence; whereas, for the trivial augmentation, we have a
genuine pseudo-collar. The key dividing line between those extremes occurs
when }$\left\{  L_{i}\right\}  $\emph{ is a small augmentation (}$L_{i}%
\leq\ker\lambda_{i}$\emph{ for all }$i$\emph{). In those cases, we call
}$\left\{  N_{i}\right\}  $\emph{ a }near pseudo-collar\emph{ structure, and
say that a 1-ended }$M^{n}$\emph{ with compact boundary is }nearly
pseudo-collarable\emph{ if it admits such a structure. The geometric
significance of the `small augmentation' requirement will become clear in the
proof of Theorem \ref{NPCT technical version}. Further discussion of that
topic is contained in \S \ref{Section: Remaining Questions}.\medskip}
\end{remark}

The following lemma adds topological meaning to the definition of $\left(
\operatorname{mod}L\right)  $-homotopy equivalence.

\begin{lemma}
\label{Lemma: (modL) h.e.}Let $\left(  X,A\right)  $ be a CW-pair for which
$i:A\hookrightarrow X$ induces a $\pi_{1}$-isomorphism, $L\trianglelefteq
\pi_{1}\left(  A\right)  $, and $S\subseteq L$ for which $\operatorname*{ncl}%
(S,\pi_{1}\left(  A\right)  )=L$. Obtain $A^{\prime}$ from $A$ by attaching a
$2$-disk $D_{s}$ along each $s\in S$; let $X^{\prime}=X\cup(\bigcup_{s\in
S}D_{s})$, and $i^{\prime}:A^{\prime}\hookrightarrow X^{\prime}$. Then $i$ is
a $\left(  \operatorname{mod}L\right)  $-homotopy equivalence if and only if
$i^{\prime}$ is a homotopy equivalence.
\end{lemma}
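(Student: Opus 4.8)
The plan is to pass to the universal cover of $X'$ and compute with $\mathbb{Z}[\pi_1(A)/L]$-homology, exploiting the fact that $\pi_1(A')\cong \pi_1(A)/L$ and $i'_\#$ is a $\pi_1$-isomorphism (the latter because attaching the disks $D_s$ kills exactly $L=\operatorname{ncl}(S,\pi_1(A))$ on both $A$ and $X$, using the $\pi_1$-isomorphism $i_\#$). Since $i'$ already induces a $\pi_1$-isomorphism, $i'$ is a homotopy equivalence if and only if $H_\ast(X',A';\mathbb{Z}[\pi_1(A')])=0$, i.e. if and only if the relative homology of the pair vanishes with coefficients in the group ring of the common fundamental group $\pi_1(A)/L$. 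So the whole lemma reduces to the homological identity
\[
H_\ast\bigl(X',A';\mathbb{Z}[\pi_1(A)/L]\bigr)\;\cong\;H_\ast\bigl(X,A;\mathbb{Z}[\pi_1(A)/L]\bigr),
\]
where on the right the $\mathbb{Z}[\pi_1(A)/L]$-module structure comes from the quotient map $\pi_1(A)\twoheadrightarrow\pi_1(A)/L$.

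To prove that identity, I would work in the cover $\widehat{X}$ of $X$ corresponding to $L\trianglelefteq\pi_1(X)\cong\pi_1(A)$, with $\widehat{A}\subseteq\widehat{X}$ the induced cover of $A$ (connected, since $i_\#$ is onto); the chain complexes of these covers, as complexes of free $\mathbb{Z}[\pi_1(A)/L]$-modules, compute exactly $H_\ast(X,A;\mathbb{Z}[\pi_1(A)/L])$. Now form $X'$ and $A'$ by attaching the $2$-disks along the elements of $S\subseteq L$. Each such attaching loop lifts to a loop in $\widehat{A}$ (because it lies in $L$, the subgroup defining the cover), so in the pullback cover $\widehat{X'}$ of $X'$ one attaches, over each $D_s$, a whole $\pi_1(A)/L$-orbit of $2$-disks, each glued along a null-homotopic (in $\widehat{A}$) loop. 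Thus $\widehat{A'}$ is obtained from $\widehat{A}$ by attaching free $\mathbb{Z}[\pi_1(A)/L]$-orbits of $2$-cells along boundaries that are already null-homologous, and likewise $\widehat{X'}$ from $\widehat{X}$; the point is that the \emph{same} cells are attached to both, along the \emph{same} chains. Hence the relative cellular chain complex $C_\ast(\widehat{X'},\widehat{A'})$ is chain isomorphic to $C_\ast(\widehat{X},\widehat{A})$ — the newly attached $2$-cells, together with their boundaries, cancel in the quotient since they are added identically on the subspace and the total space. (Concretely: $C_\ast(\widehat{X'})=C_\ast(\widehat{X})\oplus F$ and $C_\ast(\widehat{A'})=C_\ast(\widehat{A})\oplus F$ with the same free summand $F$ of added $2$-cells and the same boundary maps on $F$, so the quotients agree.) This gives the displayed isomorphism, and the lemma follows.

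The main obstacle I anticipate is bookkeeping the equivariant cell structure carefully enough to justify that the attached $2$-cells contribute an identical free $\mathbb{Z}[\pi_1(A)/L]$-summand to $C_\ast(\widehat{X'})$ and $C_\ast(\widehat{A'})$ with matching boundary operators — in particular, verifying that the attaching map of each $D_s$, viewed in $\widehat{A}$ and then in $\widehat{X}$, determines the \emph{same} $1$-chain (up to the $\mathbb{Z}[\pi_1(A)/L]$-module identification $C_1(\widehat{A})\hookrightarrow C_1(\widehat{X})$), which is where one uses both $S\subseteq L$ (so the loops lift) and $i_\#$ being a $\pi_1$-isomorphism (so no extra identifications or branching occur in passing from $\widehat{A}$ to $\widehat{X}$). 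Once that is set up, everything else is formal. A cleaner packaging, which I would use if the direct chain-level argument gets unwieldy, is: excision shows $H_\ast(X',X;\mathbb{Z}[\pi_1(A)/L])\cong H_\ast(A',A;\mathbb{Z}[\pi_1(A)/L])$ since $X'/X\cong A'/A$ is a wedge of $2$-spheres (one orbit per $s$, as the attaching loops are null-homotopic in the relevant covers), and then a diagram chase on the long exact sequences of the triples $(X',X,\varnothing)$, $(X',A',A)$, $(A',A,\varnothing)$ yields the isomorphism $H_\ast(X',A';\mathbb{Z}[\pi_1(A)/L])\cong H_\ast(X,A;\mathbb{Z}[\pi_1(A)/L])$ directly.
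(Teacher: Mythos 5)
Your argument is essentially the paper's proof: pass to the cover $\widehat{X}\supseteq\widehat{A}$ of $X$ corresponding to $L$, observe that attaching the lifted $2$-cells produces the universal covers $\widetilde{X}'\supseteq\widetilde{A}'$, and combine excision with Shapiro's Lemma and the relative Hurewicz/Whitehead package. One small slip worth noting: the lifts of the loops in $S$ are closed loops in $\widehat{A}$ (because $S\subseteq L\cong\pi_{1}(\widehat{A})$) but are generally \emph{not} null-homotopic there; fortunately your chain-level/excision argument never needs that, only that the same orbits of $2$-cells are attached along the same chains in both covers.
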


\begin{proof}
Let $p:\widehat{X}\rightarrow X$ be the covering projection corresponding to
$L$. Then $\widehat{A}=p^{-1}\left(  A\right)  $ is the cover of $A$
corresponding to $L$. Viewing $S$ as a collection of loops in $A$ and
$\widehat{S}$ the set of all lifts of those loops, then attaching $2$-disks to
$\widehat{A}$ (and, simultaneously $\widehat{X}$) along $\widehat{S}$ produces
universal covers $\widetilde{A}^{\prime}$ and $\widetilde{X}^{\prime}$.

Assume now that $i:A\hookrightarrow X$ is a $(\operatorname{mod}L)$-homotopy
equivalence. Then by Shapiro's Lemma \cite[p.100]{DK}, $H_{\ast}\left(
\widehat{X},\widehat{A};%
\mathbb{Z}
\right)  =0$, so by excision $H_{\ast}\left(  \widetilde{X}^{\prime
},\widetilde{A}^{\prime};%
\mathbb{Z}
\right)  =0$. Since both spaces are simply connected, the relative Hurewicz
Theorem implies that $\pi_{\ast}\left(  \widetilde{X}^{\prime},\widetilde
{A}^{\prime}\right)  =0$; therefore $\pi_{\ast}\left(  X^{\prime},A^{\prime
}\right)  =0$. By Whitehead's Theorem $i^{\prime}$ is a homotopy equivalence.

Conversely, if $i^{\prime}$ is a homotopy equivalence, then its lift
$\widetilde{A}^{\prime}\hookrightarrow\widetilde{X}^{\prime}$ is a homotopy
equivalence. Therefore $H_{\ast}\left(  \widetilde{X}^{\prime},\widetilde
{A}^{\prime};%
\mathbb{Z}
\right)  =0$, so by excision $H_{\ast}\left(  \widehat{X},\widehat{A};%
\mathbb{Z}
\right)  =0$, and by Shapiro's Lemma $H_{\ast}\left(  X,A;%
\mathbb{Z}
\mathbb{[\pi}_{1}\left(  A\right)  /L]\right)  =0$.
\end{proof}

The following is a useful corollary.

\begin{lemma}
\label{Lemma: (modQ) h.e.} Let $\left(  X,A\right)  $ be a CW-pair for which
$i:A\hookrightarrow X$ induces a $\pi_{1}$-isomorphism. Suppose
$L\trianglelefteq\pi_{1}\left(  A\right)  $. If $H_{\ast}\left(  X,A;%
\mathbb{Z}
\lbrack\pi_{1}\left(  A\right)  /L]\right)  =0$, then $H_{\ast}\left(  X,A;%
\mathbb{Z}
\lbrack\pi_{1}\left(  A\right)  /J]\right)  =0$ for any $J$ with $L<J\unlhd
\pi_{1}\left(  A\right)  $. In particular, $H_{\ast}\left(  X,A;%
\mathbb{Z}
\right)  =0\medskip$
\end{lemma}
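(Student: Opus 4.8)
The plan is to deduce this from Lemma~\ref{Lemma: (modL) h.e.} by a covering-space comparison. The hypothesis $H_{\ast}(X,A;\mathbb{Z}[\pi_{1}(A)/L])=0$ says precisely that $i:A\hookrightarrow X$ is a $(\operatorname{mod}L)$-homotopy equivalence, and the conclusion we want for a larger normal subgroup $J$ is that $i$ is a $(\operatorname{mod}J)$-homotopy equivalence. So the whole statement is: \emph{if $i$ is a $(\operatorname{mod}L)$-homotopy equivalence and $L\le J\unlhd\pi_{1}(A)$, then $i$ is a $(\operatorname{mod}J)$-homotopy equivalence.} The case $J=\pi_{1}(A)$ then gives $H_{\ast}(X,A;\mathbb{Z})=0$ as a special instance.

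First I would pick a set $S\subseteq L$ with $\operatorname{ncl}(S,\pi_{1}(A))=L$, and separately a set $T\subseteq J$ with $\operatorname{ncl}(T,\pi_{1}(A))=J$; since $L\le J$ we may as well take $S\subseteq T$ (enlarge $T$ if necessary). Form $A'$ from $A$ by attaching $2$-disks along $S$, and $X'=X\cup(\bigcup_{s\in S}D_{s})$, as in Lemma~\ref{Lemma: (modL) h.e.}; by that lemma, $i':A'\hookrightarrow X'$ is a genuine homotopy equivalence. Now attach further $2$-disks along the elements of $T\setminus S$ to get $A''$ from $A'$ and $X''$ from $X'$; since $i'$ is a homotopy equivalence and we are attaching the \emph{same} cells (along the same loops, which make sense in $A'\simeq X'$) to both, the inclusion $i'':A''\hookrightarrow X''$ is again a homotopy equivalence. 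But $A''$ is obtained from $A$ by attaching $2$-disks along $T$, and $\operatorname{ncl}(T,\pi_{1}(A))=J$, so $A''$ is exactly the space called $A'$ in the statement of Lemma~\ref{Lemma: (modL) h.e.} applied with the subgroup $J$, and likewise $X''$ is the corresponding $X'$. Applying the `if $i'$ is a homotopy equivalence' direction of that lemma (now with $J$ in the role of $L$) yields $H_{\ast}(X,A;\mathbb{Z}[\pi_{1}(A)/J])=0$, which is the claim. Taking $J=\pi_{1}(A)$ gives the final sentence.

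I expect the only subtle point to be the middle step: verifying that attaching $2$-cells along $T\setminus S$ to a homotopy equivalence $i':A'\hookrightarrow X'$ again produces a homotopy equivalence. This is routine—attaching cells along a map $\varphi:\bigvee S^{1}\to A'$ to the domain and along $i'\circ\varphi$ to the codomain gives a map of pushouts which is a homotopy equivalence whenever $i'$ is (gluing lemma / the fact that cofibrations are preserved and the inclusion of the pushout is determined up to homotopy equivalence by the data). One must be mildly careful that the attaching loops for $T\setminus S$, originally living in $A$, represent well-defined homotopy classes in $A'$ and that their images under $i'$ agree with the attaching loops used to build $X''$; this holds because $X'=X\cup(\bigcup_{s\in S}D_{s})$ contains $X$ and the cells over $T\setminus S$ are attached along the images in $X\subseteq X'$ of the same loops. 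No new ideas beyond Lemma~\ref{Lemma: (modL) h.e.} are needed, so the proof is short.

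\begin{proof}
Choose $T\subseteq J$ with $\operatorname{ncl}(T,\pi_{1}(A))=J$, and (enlarging $T$ if necessary) a subset $S\subseteq T$ with $S\subseteq L$ and $\operatorname{ncl}(S,\pi_{1}(A))=L$. As in Lemma~\ref{Lemma: (modL) h.e.}, form $A'$ from $A$ by attaching a $2$-disk $D_{s}$ along each $s\in S$, put $X'=X\cup(\bigcup_{s\in S}D_{s})$, and let $i':A'\hookrightarrow X'$. Since $H_{\ast}(X,A;\mathbb{Z}[\pi_{1}(A)/L])=0$, Lemma~\ref{Lemma: (modL) h.e.} shows $i'$ is a homotopy equivalence. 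Now attach a $2$-disk along each $t\in T\setminus S$ to both $A'$ and $X'$, obtaining $A''$ and $X''$ and an inclusion $i'':A''\hookrightarrow X''$. Since the same $2$-cells (along the images under $i'$ of the same loops) are attached to $A'$ and to $X'$ and $i'$ is a homotopy equivalence, the gluing lemma for homotopy pushouts gives that $i''$ is a homotopy equivalence. But $A''$ is obtained from $A$ by attaching $2$-disks along $T$ with $\operatorname{ncl}(T,\pi_{1}(A))=J$, and $X''=X\cup(\bigcup_{t\in T}D_{t})$; so $A''$ and $X''$ are the spaces denoted $A'$ and $X'$ in Lemma~\ref{Lemma: (modL) h.e.} when that lemma is applied with $J$ in place of $L$. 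As $i''$ is a homotopy equivalence, that lemma yields $H_{\ast}(X,A;\mathbb{Z}[\pi_{1}(A)/J])=0$. Taking $J=\pi_{1}(A)$ gives $H_{\ast}(X,A;\mathbb{Z})=0$.
\end{proof}
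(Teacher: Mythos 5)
Your proof is correct, and it follows the route the paper evidently intends: Lemma \ref{Lemma: (modQ) h.e.} is stated without proof as a corollary of Lemma \ref{Lemma: (modL) h.e.}, and your argument---first attach $2$-cells along a normal generating set $S$ of $L$ to convert the hypothesis into a genuine homotopy equivalence $A'\hookrightarrow X'$, then attach the remaining cells over $T\setminus S$ and use that cobase change along a cofibration preserves homotopy equivalences, and finally apply the converse direction of Lemma \ref{Lemma: (modL) h.e.} with $J$ in place of $L$---is a correct and complete way to carry that out. The one step you flag as subtle (that $A''\hookrightarrow X''$ remains a homotopy equivalence) is indeed just the gluing lemma for CW pairs, as you say.
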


The next observation is a direct analog of Theorem \ref{one-sided}.

\begin{theorem}
\label{Thm: one-sided general} Let $\left(  W,P,Q\right)  $ be a compact
$(\operatorname{mod}L)$-one-sided h-cobordism between closed manifolds with
$L\trianglelefteq\pi_{1}\left(  W\right)  $. Let $j:Q\hookrightarrow W$ and
$L^{\prime}=j_{\#}^{-1}\left(  L\right)  $. Then

\begin{enumerate}
\item both $P\hookrightarrow W$ and $Q\hookrightarrow W$ are $%
\mathbb{Z}
\mathbb{[\pi}_{1}\left(  W\right)  /L]$-homology equivalences, i.e.,
\newline$H_{\ast}\left(  W,P;%
\mathbb{Z}
\mathbb{[\pi}_{1}\left(  W\right)  /L]\right)  =\allowbreak0=\allowbreak
H_{\ast}\left(  W,Q;%
\mathbb{Z}
\mathbb{[\pi}_{1}\left(  W\right)  /L]\right)  $, and

\item $K=\ker j_{\#}$ $\trianglelefteq\pi_{1}\left(  Q\right)  $ is strongly
$L^{\prime}$-perfect.
\end{enumerate}
\end{theorem}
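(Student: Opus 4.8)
The plan is to mimic the proof of Theorem~\ref{one-sided}, replacing ordinary homology with $\mathbb{Z}[\pi_1(W)/L]$-homology throughout, and using the group-theoretic characterizations from \S\ref{Subsection: Variations on the notion of a perfect group}. First I would prove part (1). The hypothesis is that $i:P\hookrightarrow W$ is a $(\operatorname{mod}L)$-homotopy equivalence, which by definition means $H_\ast(W,P;\mathbb{Z}[\pi_1(W)/L])=0$; so the content is the statement for $(W,Q)$. The standard trick for one-sided $h$-cobordisms is Poincar\'e--Lefschetz duality in the appropriate local coefficient system: since $(W,P,Q)$ is a compact oriented cobordism, duality gives $H_k(W,Q;\Lambda)\cong H^{n-k}(W,P;\Lambda)$ for $\Lambda=\mathbb{Z}[\pi_1(W)/L]$, and the latter vanishes because $H_\ast(W,P;\Lambda)=0$ and a universal-coefficient/Ext argument (over the group ring, or passing to the cover corresponding to $L$ via Shapiro's Lemma as in Lemma~\ref{Lemma: (modL) h.e.}) kills the cohomology as well. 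Working in the cover $\widehat W\to W$ corresponding to $L\trianglelefteq\pi_1(W)$ is probably the cleanest route: $(\operatorname{mod}L)$-homotopy equivalence of $P\hookrightarrow W$ says $H_\ast(\widehat W,\widehat P;\mathbb{Z})=0$, then ordinary Poincar\'e--Lefschetz duality on the (possibly noncompact, but finitely-covered-fiber) pair gives $H_\ast(\widehat W,\widehat Q;\mathbb{Z})=0$, and Shapiro's Lemma translates back.

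Next, part (2). By Theorem~\ref{one-sided}(2), applied to the underlying (non-relative) $h$-cobordism is not quite available since $P\hookrightarrow W$ is only a $\mathbb{Z}[\pi_1(W)/L]$-homology equivalence, not a homotopy equivalence; but surjectivity of $j_\#:\pi_1(Q)\to\pi_1(W)$ is given by hypothesis, so set $K=\ker j_\#$ and $L'=j_\#^{-1}(L)$, noting $K\trianglelefteq L'\trianglelefteq\pi_1(Q)$. I need $K$ strongly $L'$-perfect, i.e.\ (by Lemma~\ref{Lemma: group homology}(3)) that $K$ is $L'$-perfect and $q_\ast:H_2(L';\mathbb{Z})\to H_2(L'/K;\mathbb{Z})$ is surjective, where $L'/K\cong L$. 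For the $L'$-perfectness, I'd use Lemma~\ref{Lemma: group homology}(2): it suffices to show $H_1(L';\mathbb{Z})\to H_1(\pi_1(W);\mathbb{Z})$ is... no, rather that $i_\ast:H_1(K)\to H_1(L')$ is zero. The cleanest tool here is the geometric characterization Lemma~\ref{Lemma: geometric perfect}: a loop $\gamma$ in $Q$ representing an element of $K$ bounds a disk in $W$ (since it dies in $\pi_1(W)$), and I want to upgrade that disk, using the $\mathbb{Z}[\pi_1(W)/L]$-acyclicity of $(W,Q)$ from part (1), to a surface in $Q$ with handle curves $\alpha_i\in K$, $\beta_i\in L'$. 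This is exactly the kind of "trade a bounding disk in $W$ for a bounding surface in the boundary" argument used in Proposition~\ref{Prop: near perfect 2}, carried out in the cover of $W$ corresponding to $L$.

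Concretely, for part (2) I would pass to $p:\widehat W\to W$ the cover corresponding to $L$, with $\widehat Q=p^{-1}(Q)$; then $\pi_1(\widehat W)=L$, $\pi_1(\widehat Q)$ surjects onto $L$ with the components of $\widehat Q$ indexed by $\pi_1(W)/L$, and a component $\widehat Q_0$ has $\pi_1(\widehat Q_0)=L'$ (after suitable basepoint choices). A loop $\gamma$ representing $\omega\in K$ lifts to a loop in $\widehat Q_0$ that is null-homotopic in $\widehat W$, hence bounds a singular disk there; because $H_2(\widehat W,\widehat Q;\mathbb{Z})=0$ (part (1) plus Shapiro), the class of that disk rel $\widehat Q$ is killed, allowing the disk to be replaced — up to the boundary contribution — by a 2-cycle in $\widehat W$ together with a surface in $\widehat Q$ whose boundary is $\gamma$; the handle curves of that surface lie in $\widehat Q_0$, hence represent elements of $L'$, while the surface living in $\widehat Q_0$ and $\gamma\in K$ forces (with a little care about which handle curves can be taken in $K$) the strong $L'$-perfectness via Lemma~\ref{Lemma: geometric perfect}(3). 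Equivalently and perhaps more efficiently, I would phrase everything homologically: $L'$-perfectness of $K$ is "$q_\ast:H_1(L')\xrightarrow{\cong}H_1(L'/K)=H_1(L)$", and surjectivity of $H_2(L')\to H_2(L)$; both follow by comparing the 5-term exact sequence (Lemma~\ref{Lemma: 5 term exact}) for $K\trianglelefteq L'$ with the homology of the pair $(\widehat W,\widehat Q_0)$, using $H_1(\widehat Q_0)\cong H_1(L')$, $H_1(\widehat W)\cong H_1(L)$, Lemma~\ref{Lemma: induced surjections on H_2} to promote surjectivity on $H_2$ of spaces to $H_2$ of groups, and the vanishing $H_\ast(\widehat W,\widehat Q;\mathbb{Z})=0$ from part (1) to force $H_k(\widehat Q_0)\to H_k(\widehat W)$ to be iso for $k=1$ and surjective for $k=2$.

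The main obstacle I anticipate is part (2): getting from the coefficient-system acyclicity $H_\ast(W,Q;\mathbb{Z}[\pi_1(W)/L])=0$ to the precise group-theoretic conclusion "$K$ strongly $L'$-perfect," rather than merely "$K$ perfect" or "$K$ is $\pi_1(W)$-perfect." The subtlety is bookkeeping the three nested groups $K\trianglelefteq L'\trianglelefteq\pi_1(Q)$ against the covering-space picture and making sure the handle curves (or the relevant $H_2$ classes) genuinely land in $L'$ and not just in $\pi_1(Q)$ — this is where the hypothesis $L'=j_\#^{-1}(L)$, equivalently $\pi_1(\widehat Q_0)=L'$, does the real work, and where one must be careful that the surface produced actually sits inside the single component $\widehat Q_0$. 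Part (1) I expect to be routine duality-plus-Shapiro; once it is in hand, part (2) is an exercise in translating it through Lemmas~\ref{Lemma: group homology}, \ref{Lemma: 5 term exact}, \ref{Lemma: induced surjections on H_2}, and \ref{Lemma: geometric perfect}.
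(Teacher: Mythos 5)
Your proposal is correct and follows essentially the same route as the paper: pass to the cover $\widehat{W}_{L}\rightarrow W$ corresponding to $L$, get part (1) from duality plus Shapiro's Lemma, and get part (2) from the Stallings--Stammbach 5-term sequence (Lemma \ref{Lemma: 5 term exact}) together with Lemma \ref{Lemma: induced surjections on H_2}, exactly as in the paper's proof. Two small points: the duality step requires the vanishing of \emph{compactly supported} (finite-cochain) cohomology $H_{f}^{\ast}\left(  \widehat{W}_{L},\widehat{P};\mathbb{Z}\right)$ in the noncompact cover, which does not follow formally from $H_{\ast}\left(  \widehat{W}_{L},\widehat{P};\mathbb{Z}\right)  =0$ --- the paper proves it as a separate Claim (via attaching $2$-cells and a proper deformation retraction), though your alternative phrasing via the $\Lambda$-module dual of the acyclic finite free complex $C_{\ast}\left(  W,P;\mathbb{Z}[\pi_{1}(W)/L]\right)$ also works once you invoke contractibility of such complexes. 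Also, $\widehat{Q}=p^{-1}\left(  Q\right)$ is in fact \emph{connected} precisely because $j_{\#}$ is surjective (the components correspond to orbits of $j_{\#}(\pi_{1}Q)$ on $\pi_{1}(W)/L$), so the component bookkeeping you worry about at the end is unnecessary.
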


\begin{proof}
First note that by the surjectivity of $j_{\#}:\pi_{1}(Q)\rightarrow\pi
_{1}(W)$, there is a canonical isomorphism $\mathbb{\pi}_{1}\left(  Q\right)
/L^{\prime}\overset{\cong}{\rightarrow}\mathbb{\pi}_{1}\left(  W\right)  /L$
that is assumed throughout. Let $p:\widehat{W}_{L}\rightarrow W$ be the
covering projection corresponding to $L$, $\widehat{P}=p^{-1}\left(  P\right)
$ and $\widehat{Q}=p^{-1}\left(  Q\right)  $. Then both $\widehat{P}$ and
$\widehat{Q}$ are connected, and their projections onto $P$ and $Q$ are the
coverings corresponding to $L$ and $L^{\prime}$.

The assertion that $H_{\ast}\left(  W,P;%
\mathbb{Z}
\mathbb{[\pi}_{1}\left(  W\right)  /L]\right)  =0$ is part of the hypothesis,
and (by Shapiro's Lemma \cite[p.100]{DK}) equivalent to the assumption that
$H_{\ast}\left(  \widehat{W}_{L},\widehat{P};%
\mathbb{Z}
\right)  =0$. To show that $H_{\ast}\left(  W,Q;%
\mathbb{Z}
\mathbb{[\pi}_{1}\left(  W\right)  /L]\right)  $ vanishes in all dimensions,
it suffices to show that $H_{\ast}\left(  \widehat{W}_{L},\widehat{Q};%
\mathbb{Z}
\right)  =0$. This will follow from Poincar\'{e} duality for noncompact
manifolds if we can verify:\medskip

\noindent\emph{Claim. }$H_{f}^{\ast}\left(  \widehat{W}_{L},\widehat{P};%
\mathbb{Z}
\right)  =0$\emph{, where the `}$f$\emph{' indicates cellular cohomology based
on finite cochains. (See \cite[Ch. 12]{Ge}.)}\medskip

Applying Lemma \ref{Lemma: (modL) h.e.}, attach $2$-cells to $W$ along a
collection $S$ of loops in $P$ to kill $L$, obtaining spaces $P^{\prime}$ and
$W^{\prime}$, and a homotopy equivalence $P^{\prime}\hookrightarrow W^{\prime
}$. Since $W$ is compact, any strong deformation retraction of $W^{\prime}$
onto $P^{\prime}$ is proper, and hence, lifts to a proper strong deformation
retraction of universal covers $\widetilde{W}^{\prime}$ onto $\widetilde
{P}^{\prime}$ \cite[\S 10.1]{Ge}. It follows that $H_{f}^{\ast}\left(
\widehat{W}^{\prime},\partial\widehat{N}_{i-1}^{\prime};%
\mathbb{Z}
\right)  =0$. Both universal covers are obtained by attaching disks along the
collection $\widehat{S}$ of lifts to $\widehat{P}$ and $\widehat{W}$ of the
loops in $S$. By excising the interiors of those disks, we conclude that
$H_{f}^{\ast}\left(  \widehat{W},\partial\widehat{N};%
\mathbb{Z}
\right)  =0$.\medskip

To verify assertion (2), consider the short exact sequence
\[
1\rightarrow K\rightarrow L^{\prime}\overset{q}{\longrightarrow}L^{\prime
}/K\rightarrow1
\]
where $L^{\prime}/K$ may be identified with $L$. Lemma
\ref{Lemma: 5 term exact} provides the 5-term exact sequence
\[
H_{2}\left(  L^{\prime};%
\mathbb{Z}
\right)  \overset{q_{\ast2}}{\longrightarrow}H_{2}\left(  L^{\prime}/K;%
\mathbb{Z}
\right)  \rightarrow K/\left[  K,L^{\prime}\right]  \rightarrow H_{1}\left(
L^{\prime};%
\mathbb{Z}
\right)  \overset{q_{\ast1}}{\longrightarrow}H_{1}\left(  L^{\prime}/K;%
\mathbb{Z}
\right)  \rightarrow0\text{.}%
\]
from which the $L^{\prime}$-perfectness of $K$ can be deduced by showing that
$q_{\ast2}$ is an epimorphism and $q_{\ast1}$ an isomorphism.

Since $\widehat{Q}\hookrightarrow\widehat{W}_{L}$ induces $q:L^{\prime
}\rightarrow L$ and since $H_{2}\left(  \widehat{W}_{L},\widehat{Q};%
\mathbb{Z}
\right)  =0$, the the long exact sequence for that pair ensures that
$H_{1}\left(  L^{\prime};%
\mathbb{Z}
\right)  \overset{\cong}{\rightarrow}H_{1}\left(  L;%
\mathbb{Z}
\right)  $. In addition, the surjectivity of $H_{2}\left(  \widehat{Q};%
\mathbb{Z}
\right)  \rightarrow H_{2}\left(  \widehat{W}_{L};%
\mathbb{Z}
\right)  $ combines with Lemma \ref{Lemma: induced surjections on H_2} to
imply surjectivity of $H_{2}\left(  L^{\prime};%
\mathbb{Z}
\right)  \rightarrow H_{2}\left(  L;%
\mathbb{Z}
\right)  $. \medskip
\end{proof}

\section{The structure of inward tame ends}

With all necessary definitions in place, we are ready to prove the second main
theorem described in the introduction. We begin by stating a strong form of
the theorem, written in the style of earlier characterization theorems from
\cite{Si} and \cite{GT2}.

\begin{theorem}
[Near Pseudo-collarability Characterization]\label{NPCT technical version}A
1-ended $n$-manifold $M^{n}$ ($n\geq6$) with compact boundary is nearly
pseudo-collarable iff each of the following conditions holds:

\begin{enumerate}
\item $M^{n}$ is inward tame,

\item the fundamental group at infinity is $\mathcal{SAP}$-semistable, and

\item $\sigma_{\infty}\left(  M^{n}\right)  =0\in\widetilde{K}_{0}\left(
\pi_{1}\left(  \varepsilon(M^{n}\right)  \right)  )$.
\end{enumerate}
\end{theorem}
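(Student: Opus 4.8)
The plan is to prove the two directions of \thmref{NPCT technical version} separately, with the bulk of the work going into the ``if'' direction, since the ``only if'' direction should follow by extracting the relevant data from a near pseudo-collar structure and invoking the one-sided cobordism analysis already developed.

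\medskip

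\noindent\textbf{The ``only if'' direction.} Suppose $M^{n}$ is nearly pseudo-collarable, so it admits a generalized $1$-neighborhood end structure $\left\{N_{i}\right\}$ with surjective $\pi_1$-bonds $\lambda_i$ and a small augmentation $\left\{L_i\right\}$ ($L_i\le\ker\lambda_i$) such that each $\partial N_i\hookrightarrow N_i$ is a $\left(\operatorname{mod}L_i\right)$-homotopy equivalence. Inward tameness (1) follows because each $N_i$ is $\left(\operatorname{mod}L_i\right)$-homotopy equivalent to its compact boundary, hence finitely dominated — more precisely, cutting $N_i$ into its one-sided cobordism pieces $W_j=N_j-\operatorname{int}N_{j+1}$ and applying \thmref{Thm: one-sided general} shows each piece, and hence each $N_i$, is finitely dominated; then apply \lemref{Lemma: inward tame}. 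For (2): writing $W_i=(N_i-\operatorname{int}N_{i+1})$ and $K_i=\ker\lambda_i$, the cobordism $\left(W_i,\partial N_i,\partial N_{i+1}\right)$ is, after identifying groups via the $\pi_1$-isomorphisms, a $\left(\operatorname{mod}L_{i}\right)$-one-sided h-cobordism; \thmref{Thm: one-sided general}(2) says $K_{i+1}=\ker\left(\pi_1(\partial N_{i+1})\to\pi_1(W_i)\right)$ is strongly $L'$-perfect where $L'=\lambda_{i+1}^{-1}(L_i)$, which is precisely the strong $\left\{L_i\right\}$-perfectness property. Since the groups are finitely presentable (boundaries of compact manifolds) and the bonds surjective, $\pi_1(\varepsilon(M^n))$ is $\mathcal{SAP}$-semistable. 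For (3): each $N_i$ is a $\left(\operatorname{mod}L_i\right)$-homotopy collar over the compact $\partial N_i$, and a standard Wall-obstruction argument — the same one that gives $\sigma_\infty=0$ for genuine pseudo-collars in \cite{GT2} — shows the finiteness obstruction vanishes, because the retraction data over the boundary produces a finite domination whose obstruction dies in $\widetilde{K}_0$; so $\sigma_\infty(M^n)=0$.

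\medskip

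\noindent\textbf{The ``if'' direction.} This is the main content and the expected obstacle. Starting from conditions (1)–(3), I would first use inward tameness plus \thmref{semistable} and the Generalized $1$-neighborhood Theorem to get a generalized $1$-neighborhood end structure with surjective $\pi_1$-bonds; by (3) and the usual controlled-finiteness/Wall argument we may arrange (after passing to a subsequence) that each $N_i$ has finite homotopy type. Condition (2) lets us assume — after passing to a further subsequence and relabeling, using \remref{Prop: nearly sub} to preserve the property — that the standard augmentation $\left\{K_i=\ker\lambda_i\right\}$ satisfies the strong $\left\{K_i\right\}$-perfectness property: each $K_{i+1}$ is strongly $\lambda_{i+1}^{-1}(K_i)$-perfect. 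The heart of the proof is then a ``one-sided surgery below the middle dimension'' construction: inside each cobordism $W_i=N_i-\operatorname{int}N_{i+1}$, I want to replace $N_{i+1}$ by a new clean neighborhood $N_{i+1}'\subseteq N_{i+1}$ so that $\partial N_i\hookrightarrow N_i-\operatorname{int}N_{i+1}'$ becomes a $\left(\operatorname{mod}K_i'\right)$-homotopy equivalence, for a suitable small augmenting subgroup. Concretely, the $\mathcal{SAP}$-condition — via the geometric characterization \lemref{Lemma: geometric perfect}(3) — provides, for each generator of $K_{i+1}$ represented by a loop in $\partial N_{i+1}$, a bounding surface $S_g$ in (a suitable cover of) $W_i$ whose $\alpha$-handle curves lie in $K_{i+1}$ and whose $\beta$-handle curves lie in $\lambda_{i+1}^{-1}(K_i)$; thickening these surfaces and attaching $1$-handles/$2$-handles accordingly, one improves connectivity of the pair $(W_i,\partial N_i)$ with $\mathbb{Z}[\pi_1/K_i']$-coefficients. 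Because $n\ge6$, Whitney-trick and handle-trading arguments (as in \cite{Si}, \cite{Gu1}, \cite{GT2}) push the homological triviality up through the middle dimensions, and absolute inward tameness together with $\sigma_\infty=0$ kills the top-dimensional finiteness/$\widetilde K_0$ obstruction, yielding an actual codimension-$0$ submanifold $A_i\hookrightarrow N_i$ which is a homotopy equivalence — i.e., the defining feature of a near pseudo-collar. Iterating and interleaving over all $i$ produces the full near pseudo-collar structure.

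\medskip

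\noindent\textbf{Where the difficulty lies.} The main obstacle is the geometric realization step: converting the \emph{strong} relative perfectness of the kernels into an actual handle decomposition that trades cells correctly with twisted coefficients, while controlling what happens to $\pi_1$ (the bonds must stay surjective and the new augmenting subgroups must remain \emph{small}, i.e.\ contained in the new kernels — this is exactly the role of ``strong'' versus ordinary almost-perfectness, and is the point flagged in \remref{NPCT technical version} iii as becoming clear in the proof). Managing the bookkeeping so that the finiteness obstruction genuinely vanishes at each stage (not merely stably), and so that the resulting $\left\{A_i\right\}$ form a cofinal nested sequence, requires the careful subsequence-and-relabel discipline established in \S2 together with the controlled $K$-theory of \cite{Gu1}. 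The lower-dimensional cases ($n=4,5$) would need the topological four-dimensional techniques of \cite{FQ}, but the stated theorem restricts to $n\ge6$, so PL handle theory suffices.
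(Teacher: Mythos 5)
Your outline matches the paper's at the level of structure, but there are two genuine gaps at exactly the points where the work happens. First, in the ``if'' direction you aim at the wrong target: you conclude by producing codimension-$0$ submanifolds $A_i\hookrightarrow N_i$ that are homotopy equivalences and call this ``the defining feature of a near pseudo-collar.'' It is not. That is the \emph{wide homotopy collar} property, which is a consequence of near pseudo-collarability; whether it is equivalent is posed as an open question at the end of the paper. The definition you must verify is that each $\partial N_i\hookrightarrow N_i$ is a $\left(\operatorname{mod}L_i\right)$-homotopy equivalence for a \emph{small} augmentation $\left\{L_i\right\}$. The paper achieves this not by surgery on bounding surfaces with $1$- and $2$-handles, but by locating $s$ new $3$-handles in $R_{i-1}$ whose attaching $2$-spheres are algebraically dual over $\mathbb{Z}\left[\pi_1(R_i)/L_i\right]$ to the belt spheres of the $2$-handles generating $H_{n-2}(\widetilde R_i,\partial\widetilde N_i)$; adding them trivializes $\ker\partial$ after passing to $\mathbb{Z}\left[\pi_1(R_i)/L_i\right]$-coefficients. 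The existence of such dual spheres uses strong $J_{i+1}$-perfectness, but making them \emph{contractible} (so they bound embedded $3$-disks that thicken to handles) requires the spherical-alteration machinery of \cite[Lemma 6.1]{GT3}, a substantial external ingredient your sketch does not identify and which your ``Whitney trick and handle trading'' phrase does not replace.

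Second, in the ``only if'' direction your verification of (1) and (3) does not work as stated. A $\left(\operatorname{mod}L_i\right)$-homotopy collar is not finitely dominated merely because its boundary is compact, and \thmref{Thm: one-sided general} applied to the compact pieces $W_j$ gives homological information, not finite domination of the infinite union $N_i$. The paper's actual mechanism --- and the place where smallness of the augmentation is used geometrically --- is to choose a finite collection of loops in $\partial N_i$ normally generating $K_i=\ker\lambda_i$, observe that (precisely because $L_i\leq K_i$) these loops bound disjoint $2$-disks in $N_{i-1}-\operatorname*{int}(N_i)$, and thicken those disks to $2$-handles; Lemma \ref{Lemma: (modL) h.e.} then shows the enlarged neighborhood $N_i^{\ast}$ has \emph{finite homotopy type}, which gives absolute inward tameness and $\sigma_{\infty}=0$ in one stroke. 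Your appeal to ``a standard Wall-obstruction argument'' bypasses this and so never explains where the hypothesis $L_i\leq\ker\lambda_i$ enters. Your verification of (2) is essentially the paper's, modulo the covering-space/Shapiro's Lemma check that $\left(W_{i-1},\partial N_{i-1},\partial N_i\right)$ really is a $\left(\operatorname{mod}L_{i-1}\right)$-one-sided h-cobordism.
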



Recall that condition (2) presumes the existence of a representation of
$\pi_{1}\left(  \varepsilon(M^{n}\right)  $ of the form%
\begin{equation}
G_{0}\overset{\lambda_{1}}{\twoheadleftarrow}G_{1}\overset{\lambda_{2}%
}{\twoheadleftarrow}G_{2}\overset{\lambda_{3}}{\twoheadleftarrow}%
\cdots\label{sequence: pi1-setup for main theorem}%
\end{equation}
with a small augmentation $\left\{  L_{i}\right\}  $ ($L_{i}\trianglelefteq
K_{i}=\ker\lambda_{i}$, for all $i$) so that each $K_{i}$ is strongly $J_{i}%
$-perfect, where $J_{i}=\lambda_{i}^{-1}\left(  L_{i-1}\right)  $.

\begin{proof}
First we verify that a nearly pseudo-collarable 1-ended manifold with compact
boundary must satisfy conditions (1)-(3).

The hypothesis provides a generalized $1$-neighborhood end structure $\left\{
N_{i}\right\}  $ on $M^{n}$ with group data
\begin{equation}
G_{0}\overset{\lambda_{1}}{\twoheadleftarrow}G_{1}\overset{\lambda_{2}%
}{\twoheadleftarrow}G_{2}\overset{\lambda_{3}}{\twoheadleftarrow}\cdots.
\label{inverse sequence in Structure Theorem}%
\end{equation}
($G_{i}=\pi_{1}\left(  N_{i}\right)  $) and a small augmentation $\left\{
L_{i}\right\}  $ ($L_{i}\trianglelefteq K_{i}=\ker\lambda_{i}$) such that each
$N_{i}$ is a $\operatorname*{mod}\left(  L_{i}\right)  $-homotopy collar.

To simultaneously verify (1) and (3), it suffices to exhibit a cofinal
sequence of clean neighborhoods of infinity, each having finite homotopy type.
Lemma \ref{Lemma: (modQ) h.e.} ensures that each $N_{i}$ is a
$\operatorname{mod}\left(  K_{i}\right)  $-homotopy collar, and since each
$\lambda_{i}$ is a surjection between finitely presented groups, each $K_{i}$
is finitely generated as a normal subgroup of $G_{i}$. Let $i$ be fixed, and
$A=\left\{  \alpha_{j}\right\}  $ a finite collection of loops in $\partial
N_{i}$ that normally generates $K_{i}$ in $G_{i}$. By Lemma
\ref{Lemma: (modL) h.e.}, if we abstractly attach a $2$-disk $\Delta_{j}^{2}$
along each $\alpha_{j}$, we obtain a homotopy equivalence
\[
\partial N_{i}\bigcup\left(  \cup\Delta_{j}^{2}\right)  \hookrightarrow
N_{i}\bigcup\left(  \cup\Delta_{j}^{2}\right)  .
\]
In particular, $N_{i}\bigcup\left(  \cup\Delta_{j}^{2}\right)  $ has the
homotopy type of a finite complex. But, since each $\alpha_{j}$ represents an
element of $\ker\lambda_{i}$, we may assume that each $\Delta_{j}^{2}$ is
properly embedded in $N_{i-1}-\operatorname*{int}\left(  N_{i}\right)  $. By
thickening these $2$-disks to $2$-handles, we obtain a clean neighborhood of
infinity $N_{i}^{\ast}$ with finite homotopy type, lying in $N_{i-1}$.

This leaves only $\mathcal{SAP}$-semistability to be checked. We will show
that (\ref{inverse sequence in Structure Theorem}) satisfies the strong
$\left\{  L_{i}\right\}  $-perfectness property; in other words, each $K_{i}$
is strongly $J_{i}$-perfect, where $J_{i}=\lambda_{i}^{-1}\left(
K_{i-1}\right)  $.

For each $i>0$, let $W_{i-1}=N_{i-1}-\operatorname*{int}\left(  N_{i}\right)
$.\medskip

\noindent\emph{Claim. }$\left(  W_{i-1},\partial N_{i-1},\partial
N_{i}\right)  $\emph{ is a }$(\operatorname{mod}L_{i-1})$\emph{-one-sided
h-cobordism.\medskip}

Fix $i$ and let $p:\widehat{N}_{i-1}\rightarrow N_{i-1}$ be the covering
corresponding to $L_{i-1}\trianglelefteq G_{i-1}=\pi_{1}\left(  N_{i-1}%
\right)  \cong\pi_{1}\left(  W_{i-1}\right)  $; let $\widehat{W}_{i-1}$ denote
$p^{-1}\left(  W_{i-1}\right)  $; and let $\widehat{N}_{i}$ denote
$p^{-1}\left(  N_{i}\right)  $. Then $\widehat{W}_{i-1}$ is the cover of
$W_{i-1}$ corresponding to $J_{i-1}$, and $\widehat{N}_{i}$ is the cover of
$N_{i}$ corresponding to $J_{i}\trianglelefteq G_{i}=\pi_{1}\left(
N_{i}\right)  $. By Lemma \ref{Lemma: (modQ) h.e.} and Shapiro's Lemma%
\[
0=H_{\ast}\left(  N_{i},\partial N_{i};%
\mathbb{Z}
\lbrack G_{i}/J_{i}]\right)  \cong H_{\ast}\left(  \widehat{N}_{i}%
,\partial\widehat{N}_{i}\partial N_{i};%
\mathbb{Z}
\right)  \text{,}%
\]
and from the long exact homology sequence for the triple $\left(  \widehat
{N}_{i-1},\widehat{W}_{i-1},\partial\widehat{N}_{i-1}\right)  $, excision, and
Shapiro's Lemma
\[
H_{\ast}\left(  \widehat{W}_{i-1},\partial\widehat{N}_{i-1};%
\mathbb{Z}
\right)  \cong H_{\ast}\left(  W_{i-1},\partial N_{i-1};%
\mathbb{Z}
\lbrack G_{i-1}/L_{i-1}]\right)  =0.
\]
The claim follows.\medskip

Finally, since the bonding map $G_{i-1}\overset{\lambda_{i}}{\twoheadleftarrow
}G_{i}$ is represented by the inclusion $W_{i-1}\hookleftarrow\partial N_{i}$,
$K_{i}$ is strongly $J_{i}$-perfect by Theorem \ref{Thm: one-sided general}.


For the converse, we must show that conditions (1)-(3) imply the existence of
a near pseudo-collar structure on $M^{n}$. Though the proof is rather
complicated, it follows the same outline as \cite{Gu1}, which followed the
original proof in \cite{Si}. For a full understanding, the reader should be
familiar with \cite{Gu1}. The new argument presented here generalizes the the
final portions of that proof. A concise review of \cite{Gu1} can be found in
\cite[\S 4]{GT2}.

In \cite{Gu1} and \cite{GT2} the goal was to improve arbitrarily small
neighborhoods of infinity to homotopy collars. That is impossible with our
weaker hypotheses; instead, the goal is to improve neighborhoods of infinity
to homotopy collars modulo certain subgroups of their fundamental groups.

By condition (2) the pro-isomorphism class of $\pi_{1}\left(  \varepsilon
\left(  M^{n}\right)  \right)  $ may be represented by a sequence%
\begin{equation}
G_{0}\overset{\lambda_{1}}{\twoheadleftarrow}G_{1}\overset{\lambda_{2}%
}{\twoheadleftarrow}G_{2}\overset{\lambda_{3}}{\twoheadleftarrow}\cdots
\end{equation}
of finitely presented groups, along with a small augmentation $\left\{
L_{i}\right\}  $ ($L_{i}\trianglelefteq K_{i}=\ker\lambda_{i}$, for all $i$)
so that each $K_{i}$ is strongly $J_{i}$-perfect, where $J_{i}=\lambda
_{i}^{-1}\left(  L_{i-1}\right)  $.

By \cite[Lemma 8]{Gu1} there is a sequence $\left\{  N_{i}\right\}  $ of
generalized $1$-neighborhoods of infinity whose inverse sequence of
fundamental groups is isomorphic to a subsequence of $\left\{  G_{i}\right\}
$.
\[%
\begin{array}
[c]{ccccccccc}%
G_{i_{0}} & \overset{\lambda_{i_{0}+1,i_{1}}}{\twoheadleftarrow} & G_{i_{1}} &
\overset{\lambda_{i_{1}+1,i_{2}}}{\twoheadleftarrow} & G_{i_{2}} &
\overset{\lambda_{i_{2}+1,i_{3}}}{\twoheadleftarrow} & G_{i_{3}} &
\overset{\lambda_{i_{3}+1,i_{4}}}{\twoheadleftarrow} & \cdots\\
\updownarrow\cong &  & \updownarrow\cong &  & \updownarrow\cong &  &
\updownarrow\cong &  & \\
\pi_{1}\left(  N_{0},p_{0}\right)  & \overset{inc_{\#}}{\twoheadleftarrow} &
\pi_{1}\left(  N_{1},p_{1}\right)  & \overset{inc_{\#}}{\twoheadleftarrow} &
\pi_{1}\left(  N_{2},p_{2}\right)  & \overset{inc_{\#}}{\twoheadleftarrow} &
\pi_{1}\left(  N_{3},p_{3}\right)  & \overset{inc_{\#}}{\twoheadleftarrow} &
\cdots
\end{array}
\]
This diagram and Proposition \ref{Prop: nearly sub} ensure that, for each $j$,
$\ker\left(  \lambda_{i_{j-1}+1,i_{j}}\right)  $ is strongly\newline%
$\lambda_{i_{j-1}+1,i_{j}}^{-1}\left(  L_{i_{j-1}}\right)  $-perfect. So by
passing to this subsequence and relabeling, we may assume that sequence
(\ref{sequence: pi1-setup for main theorem}) and the corresponding subgroup
data matches the fundamental group data of $\left\{  N_{i}\right\}  $. Note
here that the $J$-groups (which are not viewed as part of the original data)
are not the same as the previous $J$-groups; they are now preimages of
\emph{compositions} of the original bonding maps.

Next we inductively improve the sequence $\{N_{j}\}$ to generalized
$k$-neighborhoods of infinity for increasing values of $k$, up to $k=n-3$. We
must frequently pass to subsequences, however, each improvement of a given
$N_{j}$ leaves its fundamental group and that of $\partial N_{i}$ intact; so
at each stage, the `new' fundamental group data will be a subsequence of the
original (\ref{sequence: pi1-setup for main theorem}), along with the
subsequence augmentation. The $J$-groups will change as per their definition,
but, by Proposition \ref{Prop: nearly sub}, we always maintain the appropriate
strong relative perfectness condition.

This neighborhood improvement process uses only the hypothesis that $M^{n}$ is
inward tame; it is identical that used in \cite[Th. 5]{Gu1} and outlined in
\cite[Theorem 3.2]{GT2}.
To save on notation we relabel the neighborhood sequences and their
corresponding groups at each stage, designating the resulting cofinal sequence
of generalized $\left(  n-3\right)  $-neighborhoods of infinity by $\left\{
N_{i}\right\}  $, with $G_{i}=\pi_{1}\left(  N_{i}\right)  $, $\lambda
_{i}:G_{i}\rightarrow G_{i-1}$ the corresponding homomorphism, $L_{i}%
\trianglelefteq K_{i}=\ker\lambda_{i}$, and $J_{i}=\lambda_{i}^{-1}\left(
L_{i-1}\right)  $.

For each $i$, let $R_{i}=N_{i}-\overset{\circ}{N}_{i+1}$ and consider the
collection of cobordisms $\{\left(  R_{i},\partial N_{i},\partial
N_{i+1}\right)  \}$. The following summary comprises the contents of Lemmas 11
and 12 of \cite{Gu1}, along with new hypotheses regarding kernels.

\begin{enumerate}
\item[i)] Each $N_{i}$ is a generalized $(n-3)$-neighborhood of infinity.

\item[ii)] Each induced bonding map $\pi_{1}\left(  N_{i}\right)
\twoheadleftarrow\pi_{1}\left(  N_{i+1}\right)  $ is surjective.

\item[iii)] Each inclusion $\partial N_{i}\hookrightarrow R_{i}\hookrightarrow
N_{i}$ induces a $\pi_{1}$-isomorphism.

\item[iv)] Each $\partial N_{i+1}\hookrightarrow R_{i}$ induces a $\pi_{1}%
$-epimorphism with kernel strongly $J_{i}$-perfect.

\item[v)] $\pi_{k}(R_{i},\partial N_{i})=0$ for all $k<n-3$ and all $i$.

\item[vi)] Each $\left(  R_{i},\partial N_{i},\partial N_{i+1}\right)  $
admits a handle decomposition based on $\partial N_{i}$ containing handles
only of index $\left(  n-3\right)  $ and $\left(  n-2\right)  $.

\item[vii)] Each $N_{i}$ admits an infinite handle decomposition with handles
only of index $\left(  n-3\right)  $ and $\left(  n-2\right)  $.

\item[viii)] Each $\left(  N_{i},\partial N_{i}\right)  $ has the homotopy
type of a relative CW pair $\left(  K_{i},\partial N_{i}\right)  $
with\newline$\dim\left(  K_{i}-\partial N_{i}\right)  \allowbreak
\leq\allowbreak n-2$.
\end{enumerate}


The obvious next goal is attempting to improve the $N_{i}$ to generalized
$\left(  n-2\right)  $-neigh\-bor\-hoods of infinity, which by item viii)
would necessarily be homotopy collars. In previous work \cite{Si}, \cite{Gu1}
and \cite{GT2}, that is the final (also the most difficult and interesting)
step. The same is true here, where the weakened hypotheses create greater
difficulties and the strategy and end goal must eventually be altered. For
now, we continue with the earlier strategies by turning attention to
$\pi_{n-2}\left(  N_{i},\partial N_{i}\right)  \cong H_{n-2}(\tilde{N}%
_{i},\partial\widetilde{N}_{i})$, which may be viewed as a $\mathbb{Z[\pi}%
_{1}N_{i}]$-module $H_{n-2}(N_{i},\partial N_{i};%
\mathbb{Z}
\mathbb{[\pi}_{1}N_{i}])$. The content of \cite[Lemma 13]{Gu1} is given by the
next two items.

\begin{enumerate}
\item[ix)] $H_{n-2}(\widetilde{N}_{i},\partial\widetilde{N}_{i})$ is a
finitely generated projective $\mathbb{Z[\pi}_{1}N_{i}]$-module.

\item[x)] As an element of $\widetilde{K}_{0}\left(  \mathbb{Z[\pi}_{1}%
N_{i}]\right)  $, $\left[  H_{n-2}(\widetilde{N}_{i},\partial\widetilde{N}%
_{i})\right]  =\left(  -1\right)  ^{n}\sigma\left(  N_{i}\right)  $, where
$\sigma\left(  N_{i}\right)  $ is the Wall finiteness obstruction for $N_{i}$.
\end{enumerate}

Taken together, these elements of $\widetilde{K}_{0}\left(  \mathbb{Z[\pi}%
_{1}N_{i}]\right)  $ determine the obstruction $\sigma_{\infty}\left(  \left(
\varepsilon(M^{n}\right)  \right)  $ found in condition (3). From now on we
assume that $\sigma_{\infty}\left(  M^{n}\right)  $ vanishes. This is
equivalent to assuming that each $\sigma\left(  N_{i}\right)  $ is the trivial
element of $\widetilde{K}_{0}\left(  \mathbb{Z[\pi}_{1}N_{i}]\right)  $, in
other words, each $H_{n-2}(\widetilde{N}_{i},\partial\widetilde{N}_{i})$ is a
stably free $\mathbb{Z[\pi}_{1}N_{i}]$-module. Therefore we have:

\begin{enumerate}
\item[xi)] By carving out finitely many trivial $\left(  n-3\right)  $-handles
from each $N_{i}$, we can arrange that $H_{n-2}(\widetilde{N}_{i}%
,\partial\widetilde{N}_{i})$ is a finitely generated free $\mathbb{Z[\pi}%
_{1}N_{i}]$-module.
\end{enumerate}

Item (xi) can be done so that these sets remain a generalized $\left(
n-3\right)  $-neighborhood of infinity, and so that their fundamental groups
and those of their boundaries are unchanged. Again, to save on notation, we
denote the improved collection by $\left\{  N_{i}\right\}  $. See \cite[Lemma
14]{Gu1} for details.

The finite generation of $H_{n-2}(\widetilde{N}_{i},\partial\widetilde{N}%
_{i})$ allows us to, after again passing to a subsequence and relabeling,
assume that

\begin{enumerate}
\item[xii)] $H_{n-2}(\widetilde{R}_{i},\partial\widetilde{N}_{i}%
)\twoheadrightarrow H_{n-2}(\widetilde{N}_{i},\partial\widetilde{N}_{i})$ is
surjective for each $i$.
\end{enumerate}

The long exact sequence for the triple $\left(  \widetilde{N}_{i}%
,\widetilde{R}_{i},\partial\widetilde{N}_{i}\right)  $ from there shows that

\begin{enumerate}
\item[xiii)] $H_{n-2}(\widetilde{R}_{i},\partial\widetilde{N}_{i}%
)\overset{\cong}{\rightarrow}H_{n-2}(\widetilde{N}_{i},\partial\widetilde
{N}_{i})$ is an isomorphism for each $i$ (hence, $H_{n-2}(\widetilde{R}%
_{i},\partial\widetilde{N}_{i})$ is a finitely generated free $\mathbb{Z[\pi
}_{1}R_{i}]$-module).
\end{enumerate}

As above, we may choose handle decompositions for the $R_{i}$ based on
$\partial N_{i}$ having handles only of index $n-3$ and $n-2$.

From now on, let $i$ be fixed. After introducing some trivial $\left(
n-3,n-2\right)  $-handle pairs, an algebraic lemma and some handle slides
allows us to obtain a handle decomposition of $R_{i}$ based on $\partial
N_{i}$ with $\left(  n-2\right)  $-handles $h_{1}^{n-2},h_{2}^{n-2}%
,\allowbreak\cdots,h_{r}^{n-2}$ and an integer $s\leq r$, such that the
subcollection $\{h_{1}^{n-2},h_{2}^{n-2},\allowbreak\cdots,h_{s}^{n-2}\}$ is a
free $\mathbb{Z}\left[  \pi_{1}R_{i}\right]  $-basis for $H_{n-2}\left(
\widetilde{R}_{i},\partial\widetilde{N}_{i}\right)  $. So we have:

\begin{enumerate}
\item[xiv)] the\emph{ }$\mathbb{Z}\left[  \pi_{1}R_{i}\right]  $-cellular
chain complex for $\left(  R_{i},\partial N_{i}\right)  $ may be expressed as%
\begin{equation}
0\rightarrow\left\langle h_{1}^{n-2},\cdots,h_{s}^{n-2}\right\rangle
\oplus\left\langle h_{s+1}^{n-2},\cdots,h_{r}^{n-2}\right\rangle
\overset{\partial}{\longrightarrow}\left\langle h_{1}^{n-3},\cdots,h_{t}%
^{n-3}\right\rangle \rightarrow0 \label{Chain complex for R_i}%
\end{equation}

\end{enumerate}

where

\begin{itemize}
\item $\left\langle h_{1}^{n-2},\cdots,h_{s}^{n-2}\right\rangle $ and
$\left\langle h_{s+1}^{n-2},\cdots,h_{r}^{n-2}\right\rangle $ represent free
$\mathbb{Z}\left[  \pi_{1}R_{i}\right]  $-submodules of $\widetilde{C}_{n-2}$
generated by the corresponding handles;\smallskip

\item $\left\langle h_{1}^{n-3},\cdots,h_{t}^{n-3}\right\rangle =\widetilde
{C}_{n-3}$\ is the free $\mathbb{Z}\left[  \pi_{1}R_{i}\right]  $-module
generated by the $\left(  n-3\right)  $-handles in $R_{i}$;\smallskip

\item $H_{n-2}(\widetilde{R}_{i},\partial\widetilde{N}_{i})=\ker
\partial=\left\langle h_{1}^{n-2},\cdots,h_{s}^{n-2}\right\rangle
\oplus\left\{  0\right\}  $; and\smallskip

\item $\partial$ takes $\left\{  0\right\}  \oplus\left\langle h_{s+1}%
^{n-2},\cdots,h_{r}^{n-2}\right\rangle $ injectively into $\left\langle
h_{1}^{n-3},\cdots,h_{t}^{n-3}\right\rangle $.\smallskip
\end{itemize}

\noindent Item xiv) and the preceding paragraph are the content \cite[Lemma
15]{Gu1}.\smallskip

To this point, we have only used the hypotheses of inward tameness and
triviality of the Wall obstruction to build the structure described by items
(i)-(xiv). All arguments used thus far appear in \cite{Gu1} and \cite{GT2},
with simpler analogs in \cite{Si}.

Under the $\pi_{1}$-stability hypothesis of \cite{Si}, $H_{n-2}(\widetilde
{R}_{i},\partial\widetilde{N}_{i})$ can now be killed by sliding the offending
$\left(  n-2\right)  $-handles $\left\{  h_{1}^{n-2},\cdots,h_{s}%
^{n-2}\right\}  $ off the $\left(  n-3\right)  $-handles and carving out their
interiors. Under the weaker $\mathcal{P}$-semistability hypothesis of
\cite{GT2}, a similar strategy works, but only after a significant preparatory
step, made possible by perfect kernels. In \cite{Gu1} an alternate strategy
was employed. Instead of killing $H_{n-2}(\widetilde{R}_{i},\partial
\widetilde{N}_{i})=\ker\partial$ by removing its generating handles $\left\{
h_{1}^{n-2},\cdots,h_{s}^{n-2}\right\}  $, the task was accomplished by
introducing new $\left(  n-3\right)  $-handles, which became images of the
$\left\{  h_{1}^{n-2},\cdots,h_{s}^{n-2}\right\}  $ under the resulting
boundary map, thereby trivializing the kernel. Complete discussions of these
approaches can be found in \cite[\S 3]{GT2} and \cite[\S 8]{Gu1}; the strategy
employed here is based on the latter.


It is helpful to change our perspective by switching to the dual handle
decomposition of $R_{i}$. Let $S_{i}$ be a closed collar neighborhood of
$\partial N_{i+1}$ in $R_{i}$, and for each $\left(  n-2\right)  $-handle
$h_{k}^{n-2}$ identified earlier, let $\overline{h}_{k}^{2}$ be its dual,
attached to $S_{i}$. Similarly, for each $\left(  n-3\right)  $-handle
$h_{k}^{n-3}$, let $\overline{h}_{k}^{3}$ be its dual. As is standard, the
attaching and belt spheres of a given handle switch roles in its dual.

Let $T_{i}=S_{i}\bigcup\left(  \overline{h}_{1}^{2}\cup\cdots\cup\overline
{h}_{s}^{2}\cup\overline{h}_{s+1}^{2}\cup\cdots\cup\overline{h}_{r}%
^{2}\right)  $, $\partial_{-}T_{i}=\partial T_{i}-\partial N_{i+1}$, and
$U_{i}$ be a closed collar on $\partial_{-}T_{i}$ in $T_{i}$. Observe that
$R_{i}=T_{i}\bigcup\left(  \overline{h}_{1}^{3}\cup\cdots\cup\overline{h}%
_{t}^{3}\right)  $. See Figure \ref{Fig2-Ri}.%
\begin{figure}
[ptb]
\begin{center}
\includegraphics[
height=2.5771in,
width=4.3344in
]%
{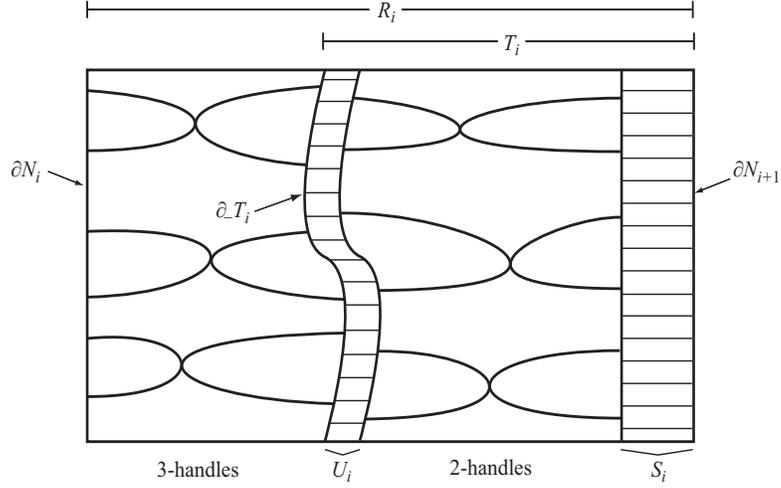}%
\caption{Schematic of $R_{i}$}%
\label{Fig2-Ri}%
\end{center}
\end{figure}

A simplified view of the next step is that we will find a collection of
$3$-handles $\left\{  \overline{k}_{1}^{3},\cdots,\overline{k}_{s}%
^{3}\right\}  $ attached to the left hand boundary of $R_{i}$ and lying in
$R_{i-1}$ so that the collection $\left\{  \Gamma_{j}^{2}\right\}  _{j=1}^{s}$
of attaching spheres of those $3$-handles is algebraically dual to the belt
spheres of $\left\{  \overline{h}_{1}^{2},\cdots,\overline{h}_{s}^{2}\right\}
$ and has trivial algebraic intersection with the belt spheres of $\left\{
\overline{h}_{s+1}^{2},\cdots,\overline{h}_{r}^{2}\right\}  $. Adding those
$3$-handles to the mix, then inverting the handle decomposition again, results
in a cobordism with chain complex
\begin{equation}
0\rightarrow\left\langle h_{1}^{n-2},\cdots,h_{s}^{n-2}\right\rangle
\oplus\left\langle h_{s+1}^{n-2},\cdots,h_{r}^{n-2}\right\rangle
\overset{\partial}{\rightarrow}\left\langle k_{1}^{n-3},\cdots,h_{s}%
^{n-3}\right\rangle \oplus\left\langle h_{1}^{n-3},\cdots,h_{t}^{n-3}%
\right\rangle \rightarrow0 \label{Chain complex-enhanced}%
\end{equation}
in which $\ker\partial=0$ as desired---but with a caveat. Although addition of
the $3$-handles does not change the fundamental group of the cobordism, the
arranged algebraic intersections between the attaching spheres of $\left\{
\overline{k}_{1}^{3},\cdots,\overline{k}_{s}^{3}\right\}  $ and the belt
spheres of the existing $2$-handles are $%
\mathbb{Z}
\left[  \pi_{1}\left(  R_{i}\right)  /L_{i}\right]  $-intersection numbers;
this is the best the hypotheses will allow. Then, to arrive at the desired
conclusion---that we have effectively killed the relative second homology, it
is necessary to switch the coefficient ring to $%
\mathbb{Z}
\left[  \pi_{1}\left(  R_{i}\right)  /L_{i}\right]  $ (in other words, mod out
by $L_{i}$), and reinterpret (\ref{Chain complex-enhanced}) as a $%
\mathbb{Z}
\left[  \pi_{1}\left(  R_{i}\right)  /L_{i}\right]  $-complex. Then, letting
$V_{i}=N_{i}\bigcup\left(  \overline{k}_{1}^{3}\cup\cdots\cup\overline{k}%
_{s}^{3}\right)  $, it follows that: $\pi_{1}\left(  V_{i}\right)  \cong%
\pi_{1}\left(  R_{i}\right)  \cong\pi_{1}\left(  N_{i}\right)  $; $\partial
V_{i}\hookrightarrow V_{i}$ induces a $\pi_{1}$-isomorphism; and $H_{\ast
}(V_{i},\partial V_{i};%
\mathbb{Z}
\left[  \pi_{1}\left(  R_{i}\right)  /L_{i}\right]  )=0$. In other words,
$V_{i}$ is a $\operatorname{mod}\left(  L_{i}\right)  $-homotopy collar.

In order to carry out the above program, we first identify a collection
$\left\{  \Gamma_{j}^{2}\right\}  _{j=1}^{s}$ of pairwise disjoint $2$-spheres
in $\partial_{-}T_{i}$ algebraically dual over $%
\mathbb{Z}
\left[  \pi_{1}\left(  R_{i}\right)  /L_{i}\right]  $ to the collection
$\left\{  \beta_{j}^{n-3}\right\}  _{j=1}^{s}$ of belt spheres of the
$2$-handles $\left\{  \overline{h}_{1}^{2},\cdots,\overline{h}_{s}%
^{2}\right\}  $ and having trivial $%
\mathbb{Z}
\left[  \pi_{1}\left(  R_{i}\right)  /L_{i}\right]  $-intersections with the
belt spheres $\left\{  \beta_{j}^{n-3}\right\}  _{j=s+1}^{r}$of the remaining
$2$-handles $\left\{  \overline{h}_{s+1}^{2},\cdots,\overline{h}_{r}%
^{2}\right\}  $. Keeping in mind that $\pi_{1}\left(  R_{i}\right)  /L_{i}$ is
canonically isomorphic to $\pi_{1}\left(  R_{i+1}\right)  /J_{i+1}$, and using
the hypothesis that $K_{i+1}$ is strongly $J_{i+1}$-perfect, such a collection
$\left\{  \Gamma_{j}^{2}\right\}  _{j=1}^{s}$ exists, as is shown in
\cite[\S 5]{GT3}. By general position, the collection can be made disjoint
from the attaching tubes of the $3$-handles $\left\{  \overline{h}_{1}%
^{3},\cdots,\overline{h}_{t}^{3}\right\}  $, so they may be viewed as lying in
$\partial N_{i}$. If the collection $\left\{  \Gamma_{j}^{2}\right\}
_{j=1}^{s}$ bounds a pairwise disjoint collection of embedded $3$-disks in
$R_{i-1}$, regular neighborhoods of those disks would provide the desired
$3$-handles, and the proof is complete. (The argument from \cite[\S 8]{Gu1}
provides details.)

For $n\geq7$, the issue is just whether the $2$-spheres $\left\{  \Gamma
_{j}^{2}\right\}  _{j=1}^{s}$ contract in $R_{i-1}$. (In dimension $6$, a
special argument is needed to get pairwise disjoint embeddings.)
Contractibility is not guaranteed; but with additional work it can be
arranged. The \textquotedblleft additional work\textquotedblright\ involves
the \emph{spherical alteration}\ of $2$-handles developed in \cite{GT3}. The
idea is to alter the $2$-handles $\left\{  \overline{h}_{1}^{2},\cdots
,\overline{h}_{s}^{2}\right\}  $ in a preplanned manner so that the
correspondingly altered $\left\{  \Gamma_{j}^{2}\right\}  _{j=1}^{s}$ contract
in the new $R_{i-1}$. Along the way it will be necessary to reconstruct the
$3$-handles $\left\{  \overline{h}_{1}^{3},\cdots,\overline{h}_{t}%
^{3}\right\}  $ as well; for later use, let $\left\{  \Theta_{j}^{2}\right\}
_{j=1}^{t}$ denote the attaching spheres of those handles.

All details were carefully laid out in \cite{GT3}, with this application in
mind. The tailor-made lemma, stated in the final section of that paper, is
repeated here.


\begin{lemma}
[{\cite[Lemma 6.1]{GT3}}]\label{Lemma from Spherical Alterations paper}Let
$R^{\prime}\subseteq R$ be a pair of n-manifolds $(n\geq6)$ with a common
boundary component $B$, and suppose there is a subgroup $L^{\prime}$ of
$\ker(\pi_{1}\left(  B\right)  \rightarrow\pi_{1}\left(  R\right)  )$ for
which $K=\ker(\pi_{1}\left(  B\right)  \rightarrow\pi_{1}\left(  R^{\prime
}\right)  )$ is strongly $L^{\prime}$-perfect. Suppose further that there is a
clean submanifold $T\subseteq R^{\prime}$ consisting of a finite collection
$\mathcal{H}^{2}$ of 2-handles in $R^{\prime}$ attached to a collar
neighborhood $S$ of $B$ with $T\hookrightarrow R^{\prime}$ inducing a $\pi
_{1}$-isomorphism (the 2-handles precisely kill the group $K$) and a finite
collection $\left\{  \Theta_{t}^{2}\right\}  $ of pairwise disjoint embedded
2-spheres in $\partial T-B$, each of which contracts in $R^{\prime}$. Then on
any subcollection $\left\{  h_{j}^{2}\right\}  _{j=1}^{k}\subseteq
\mathcal{H}^{2}$ , one may perform spherical alterations to obtain 2-handles
$\left\{  \dot{h}_{j}^{2}\right\}  _{j=1}^{k}$ in $R^{\prime}$ so that in
$\partial\dot{T}-B$ (where $\dot{T}$ is the correspondingly altered version of
$T$) there is a collection of 2-spheres $\left\{  \dot{\Gamma}_{j}%
^{2}\right\}  _{j=1}^{k}$ algebraically dual over $\mathbb{Z}\left[  \pi
_{1}\left(  B\right)  /L^{\prime}\right]  $ to the belt spheres $\left\{
\beta_{j}^{n-3}\right\}  _{j=1}^{k}$ common to $\left\{  h_{j}^{2}\right\}
_{j=1}^{k}$ and $\left\{  \dot{h}_{j}^{2}\right\}  _{j=1}^{k}$ with the
property that each $\dot{\Gamma}_{j}^{2}$ contracts in $R$. Furthermore, each
correspondingly altered 2-sphere $\dot{\Theta}_{t}^{2}$ (now lying in
$\partial\dot{T}-B$) has the same $\mathbb{Z}\left[  \pi_{1}\left(  B\right)
/L^{\prime}\right]  $-intersection number with those belt spheres and with any
other oriented $(n-3)$-manifold lying in both $\partial T-B$ and $\partial
\dot{T}-B$ as did $\Theta_{t}^{2}$. Whereas the 2-spheres $\left\{  {\Theta
}_{t}^{2}\right\}  $ each contracted in $R^{\prime}$, the $\dot{\Theta}%
_{t}^{2}$ each contract in $R$.
\end{lemma}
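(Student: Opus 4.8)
The plan is to read this lemma as the geometric incarnation of strong relative perfectness, so the engine is Lemma~\ref{Lemma: geometric perfect}(3): since $K=\ker(\pi_{1}(B)\to\pi_{1}(R'))$ is strongly $L'$-perfect, every loop in $B$ representing an element of $K$ bounds a compact orientable surface $S_{g}$ in $B$ carrying a complete set of handle curves $\{\alpha_{1},\beta_{1},\dots,\alpha_{g},\beta_{g}\}$ with each $\alpha_{m}\in K$ and each $\beta_{m}\in L'$. The $2$-handles of $\mathcal{H}^{2}$ precisely kill $K$, so every attaching circle represents an element of $K$; for a handle $h_{j}^{2}$ in the chosen subcollection, a meridian disk $\delta_{j}$ meets the belt sphere $\beta_{j}^{n-3}$ once, and after general position misses the cores of the other handles, while $\partial\delta_{j}$ is isotopic in $B$ to the attaching circle $\gamma_{j}$. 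Capping $\delta_{j}$ by a surface $F_{j}\subseteq B$ bounded by $\gamma_{j}$ produces a closed orientable ``naive dual'' surface $\Gamma_{j}^{0}=\delta_{j}\cup F_{j}$, which one checks meets $\beta_{j}^{n-3}$ once and the other belt spheres not at all; by the Lemma we may take $F_{j}$, hence $\Gamma_{j}^{0}$, to carry handle curves $\alpha_{m}^{(j)}\in K$ and $\beta_{m}^{(j)}\in L'$. What remains is to reduce the genus of $\Gamma_{j}^{0}$ to zero without disturbing this intersection pattern.

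Genus reduction splits along the handle pairs into a $\beta$-half and an $\alpha$-half. For the $\beta$-half: each $\beta_{m}^{(j)}$ lies in $L'\le\ker(\pi_{1}(B)\to\pi_{1}(R))$, hence bounds a singular disk in $R$ which, since $n\ge 6$, a general-position and framing argument upgrades to an embedded surgery disk meeting $\Gamma_{j}^{0}$ only along $\beta_{m}^{(j)}$; surgering lowers the genus and makes the outcome contract \emph{in $R$} --- this is exactly why the conclusion reads ``contracts in $R$'' and not ``in $R'$''. For the $\alpha$-half: each $\alpha_{m}^{(j)}$ lies in $K$, which the $2$-handles kill, so it bounds a disk assembled from meridian disks of the handles; surgering $\Gamma_{j}^{0}$ along such a disk would finish the genus reduction but would drag the surface over handles and corrupt its belt-sphere intersection numbers. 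The resolution --- the technical heart --- is not to surger $\Gamma_{j}^{0}$ but to perform the \emph{spherical alteration} of the handles $\{h_{k}^{2}\}$ developed in \cite{GT3}: tube their cores to the spheres $\{\Theta_{t}^{2}\}$ (which contract in $R'$), producing $\{\dot h_{k}^{2}\}$ with the same attaching circles and the same belt spheres. Because this modification is local --- supported in a neighborhood of the altered handles together with the connecting tubes --- one gets $\pi_{1}(\dot T)\cong\pi_{1}(T)$, the altered handles still kill $K$, any oriented $(n-3)$-manifold lying in both $\partial T-B$ and $\partial\dot T-B$ keeps its $\mathbb{Z}[\pi_{1}(B)/L']$-intersection numbers, and the alteration absorbs the $\alpha$-surgeries so that in $\partial\dot T-B$ one reads off honest $2$-spheres $\{\dot\Gamma_{j}^{2}\}$ dual over $\mathbb{Z}[\pi_{1}(B)/L']$ to $\{\beta_{j}^{n-3}\}_{j=1}^{k}$; the same mechanism promotes each $\Theta_{t}^{2}$ from ``contracts in $R'$'' to the statement that $\dot\Theta_{t}^{2}$ contracts in $R$.

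The reason the coefficient ring cannot be improved to $\mathbb{Z}[\pi_{1}(B)]$ is that the curves $\beta_{m}^{(j)}$ used to remove half the genus lie in $L'$: in the cover of $B$ determined by $L'$ those curves and all their translates become inessential, so their contributions disappear at the $\mathbb{Z}[\pi_{1}(B)/L']$-coefficient level but genuinely do not over $\mathbb{Z}[\pi_{1}(B)]$. This is the concrete manifestation of the slogan following Lemma~\ref{basic implications}: the smaller $L'$, the closer the conclusion comes to honest duality over $\mathbb{Z}[\pi_{1}(B)]$ and to honest spheres.

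The step I expect to be the main obstacle is carrying out the spherical alteration consistently and simultaneously for the whole subcollection while keeping $\{\dot\Gamma_{j}^{2}\}$ pairwise disjoint and \emph{embedded}: one must track how altering a single handle $h_{k}^{2}$ propagates to every $\Gamma_{j}^{0}$ and every $\Theta_{t}^{2}$, verify that all the claimed $\mathbb{Z}[\pi_{1}(B)/L']$-intersection numbers are invariant under that propagation, and choose tubes and framings creating no new self- or mutual intersections. In dimension $6$ the Whitney trick is unavailable, so embeddedness and pairwise disjointness there require a separate low-dimensional argument (as the surrounding text already flags). All of this is executed in \cite{GT3}; the statement above is precisely its Lemma~6.1, tailored to the present application, so in the body of this paper I would simply invoke it --- the route just sketched being how I would reconstruct its proof.
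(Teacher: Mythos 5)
The paper offers no proof of this lemma at all: it is quoted verbatim from \cite[Lemma 6.1]{GT3} and simply invoked, exactly as you conclude you would do. Your reconstruction sketch (geometric strong $L^{\prime}$-perfectness via Lemma \ref{Lemma: geometric perfect}, capped meridian disks as naive duals, surgery on the $\beta$-curves in $R$, and spherical alteration to absorb the $\alpha$-curves) is consistent with the machinery the paper describes around the citation, but since the actual proof lives in \cite{GT3} there is nothing in this paper to check it against.
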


Apply Lemma \ref{Lemma from Spherical Alterations paper} to the current setup,
with the following substitutions:
\[%
\begin{array}
[c]{llll}%
\underline{\mathbf{Lemma}\text{ \ref{Lemma from Spherical Alterations paper}}}
&  & \hspace*{0.05in} & \underline{\mathbf{Current\ situation}\text{ }}\\
R^{\prime} & \leftrightarrow &  & R_{i}\\
R & \leftrightarrow &  & R_{i}\bigcup R_{i-1}\\
B & \leftrightarrow &  & \partial N_{i+1}\\
\mathcal{H}^{2} & \leftrightarrow &  & \left\{  \overline{h}_{1}^{2}%
,\cdots,\overline{h}_{s}^{2},\overline{h}_{s+1}^{2},\cdots,\overline{h}%
_{r}^{2}\right\} \\
L^{\prime} & \leftrightarrow &  & J_{i+1}=\lambda_{i+1}^{-1}\left(
L_{i}\right) \\
T & \leftrightarrow &  & T_{i}=S_{i}\bigcup\left(  \overline{h}_{1}^{2}%
\cup\cdots\cup\overline{h}_{s}^{2}\cup\overline{h}_{s+1}^{2}\cup\cdots
\cup\overline{h}_{r}^{2}\right) \\
k\in%
\mathbb{Z}%
& \leftrightarrow &  & s\in%
\mathbb{Z}%
\\
\left\{  h_{j}^{2}\right\}  _{j=1}^{k} & \leftrightarrow &  & \left\{
\overline{h}_{j}^{2}\right\}  _{j=1}^{s}\\
\left\{  \Gamma_{j}^{2}\right\}  _{j=1}^{k} & \leftrightarrow &  & \left\{
\Gamma_{j}^{2}\right\}  _{j=1}^{s}\\
\left\{  \Theta_{t}^{2}\right\}  & \leftrightarrow &  & \left\{  \Theta
_{j}^{2}\right\}  _{j=1}^{t}%
\end{array}
\]

After applying this lemma, the collection $\left\{  \overline{h}_{j}%
^{2}\right\}  _{j=1}^{s}$ is replaced by altered versions $\left\{
\overset{\cdot}{\bar{h}_{j}^{2}}\right\}  _{j=1}^{s}$ and the original
collection $\left\{  \overline{h}_{j}^{2}\right\}  _{j=s+1}^{r}$ is retained.
Let
\[
\dot{T}_{i}=S_{i}\bigcup\left(  \overset{\cdot}{\bar{h}_{1}^{2}}\cup\cdots
\cup\overset{\cdot}{\bar{h}_{s}^{2}}\cup\overline{h}_{s+1}^{2}\cup\cdots
\cup\overline{h}_{r}^{2}\right)
\]
and $\partial_{-}\dot{T}_{i}=\partial\dot{T}_{i}-\partial N_{i+1}$. The
collections $\left\{  \Gamma_{j}^{2}\right\}  _{j=1}^{s}$ and $\left\{
\Theta_{j}^{2}\right\}  _{j=1}^{t}$ are replaced by altered versions $\left\{
\dot{\Gamma}_{j}^{2}\right\}  _{j=1}^{s}$ and $\left\{  \dot{\Theta}_{j}%
^{2}\right\}  _{j=1}^{t}$ which lie in $\partial_{-}\dot{T}_{i}$ and contract
in $\overline{R_{i}\cup R_{i-1}-\dot{T}}$. The original $3$-handles $\left\{
\overline{h}_{j}^{3}\right\}  _{j=1}^{t}$ must be discarded since their
attaching tubes have been disrupted; replacements will be constructed shortly.
When $n\geq7$, use general position to choose pairwise disjoint collection of
properly embedded $3$-disks in $\overline{R_{i}\cup R_{i-1}-\dot{T}}$ with
boundaries corresponding to the $2$-spheres $\left\{  \dot{\Gamma}_{j}%
^{2}\right\}  _{j=1}^{k}\cup\left\{  \dot{\Theta}_{t}^{2}\right\}  $. Those
$3$-disks may be thickened to $3$-handles by taking regular neighborhoods.
With all of these handles finally in place, the argument described earlier
completes the proof. When $n=6$, the same is true, but the $\pi$-$\pi$
argument used in \cite[Thms. 4.2 \& 5.3]{GT3} is needed in order to find
pairwise disjoint embedded $3$-disks.




\end{proof}

\begin{remark}
\emph{In reality, we have shown a stronger result than what is stated in
Theorem \ref{NPCT technical version}. Specifically, the near pseudo-collar
structures obtained are as close to actual pseudo-collars as the augmentation
is to the trivial augmentation. For example, if }$\left\{  L_{i}\right\}
$\emph{ is the trivial augmentation, the above argument contains an
alternative proof of the main result of \cite{GT2} (stated here as Theorem
\ref{PCT technical version}). More generally, if }$\left\{  L_{i}\right\}
$\emph{ lies somewhere between the trivial augmentation and the standard
augmentation, then a near pseudo-collar structure on }$M^{n}$\emph{ can be
chosen to reflect that augmentation.}
\end{remark}



\section{The Examples: Proof of Theorem
\ref{Theorem: Existence of counterexamples}}

\subsection{Introduction to the examples}

The main examples of \cite{GT1}, described here in Example \ref{Example: 1},
proved the existence of (absolutely) inward tame open manifolds that are not
pseudo-collarable. In this section we construct open manifolds that are
absolutely inward tame but not nearly pseudo-collarable. Since the examples
from \cite{GT1} are nearly pseudo-collarable, the new examples fill a gap in
the spectrum of known end structures.

The examples of \cite{GT1} began with algebra. The main theorems of that paper
showed that all inward tame open manifolds have pro-finitely generated,
semistable fundamental group, and stable $%
\mathbb{Z}
$-homology, at infinity. The missing ingredient for detecting a pseudo-collar
structure was $\mathcal{P}$-semistability. With that knowledge, an inverse
sequence of groups satisfying the necessary properties, but failing
$\mathcal{P}$-semistability, became the blueprint for an example. A nontrivial
handle-theoretic strategy was needed to realize the examples, but the heart of
the matter was the group theory.

A similar story plays out here. We will begin with an inverse sequence of
finitely presented groups with surjective bonding maps that become
isomorphisms upon abelianization; but this time, in light of Theorems
\ref{Theorem: inward tame implies AP-semistability} and
\ref{Theorem: NPC Characterization--simple version}, we want an $\mathcal{AP}%
$-semistable sequence that is not $\mathcal{SAP}$-semistable. The first step
is to identify such a sequence.

Let $\mathbb{F}_{3}=\left\langle a_{1},a_{2},a_{3}\mid\ \right\rangle $, the
free group on the three generators; $r_{1,1}=\left[  a_{2},a_{3}\right]  $,
$r_{1,2}=\left[  a_{1},a_{3}\right]  $, and $r_{1,3}=\left[  a_{1}%
,a_{2}\right]  $; $\mathbb{A}_{1}=\operatorname*{ncl}\left(  \{r_{1,1}%
,r_{1,2},r_{1,3}\},\mathbb{F}_{3}\right)  $; and $G_{1}=\mathbb{F}%
_{3}/\mathbb{A}_{1}$. Notice that $\mathbb{A}_{1}$ is precisely the commutator
subgroup $\left[  \mathbb{F}_{3},\mathbb{F}_{3}\right]  $, so $G_{1}\cong%
\mathbb{Z}
\oplus%
\mathbb{Z}
\oplus%
\mathbb{Z}
$.

Let $r_{2,1}=\left[  r_{1,2,},r_{1,3}\right]  $, $r_{2,2}=\left[
r_{1,1},r_{1,3}\right]  $, and $r_{1,3}=\left[  r_{1,1},r_{1,2}\right]  $;
$\mathbb{A}_{2}=\operatorname*{ncl}\left(  \{r_{2,1},r_{2,2},r_{2,3}%
\},\mathbb{F}_{3}\right)  $; and $G_{2}=\mathbb{F}_{3}/\mathbb{A}_{2}$. Since
$\mathbb{A}_{2}\leq\mathbb{A}_{1}$, there is an induced epimorphism
$G_{1}\overset{\lambda_{2}}{\longleftarrow}G_{2}$ which abelianizes to the
identity map on $%
\mathbb{Z}
\oplus%
\mathbb{Z}
\oplus%
\mathbb{Z}
$.

Continue inductively, letting $r_{i+1,1}=\left[  r_{i,2},r_{i,3}\right]  $,
$r_{i+1,2}=\left[  r_{i,1},r_{i,3}\right]  $, and $r_{i+1,3}=\left[
r_{i,1},r_{i,2}\right]  $; $\mathbb{A}_{i+1}\allowbreak=\allowbreak
\operatorname*{ncl}\left(  \{r_{i+1,1},r_{i+1,2},r_{i+1,3}\},\mathbb{F}%
_{3}\right)  $; and $G_{i+1}=\mathbb{F}_{3}/\mathbb{A}_{i+1}$. The result is a
nested sequence of normal subgroups of $\mathbb{F}_{3}$, $\mathbb{A}_{1}%
\geq\mathbb{A}_{2}\geq\mathbb{A}_{3}\geq\cdots$, and a corresponding inverse
sequence of quotient groups
\begin{equation}
G_{1}\overset{\lambda_{2}}{\twoheadleftarrow}G_{2}\overset{\lambda_{3}%
}{\twoheadleftarrow}G_{3}\overset{\lambda_{4}}{\twoheadleftarrow}%
\cdots\label{Chosen inverse sequence}%
\end{equation}
which abelianizes to the constant inverse sequence
\[%
\mathbb{Z}
^{3}\overset{\operatorname*{id}}{\longleftarrow}%
\mathbb{Z}
^{3}\overset{\operatorname*{id}}{\longleftarrow}%
\mathbb{Z}
^{3}\overset{\operatorname*{id}}{\longleftarrow}\cdots.
\]
A more delicate motivation for our choices is the following: For each $i>1$,
$\ker\lambda_{i}\allowbreak=\allowbreak\operatorname*{ncl}\left(
\{r_{i-1,1},r_{i-1,2},r_{i-1,3}\},G_{i}\right)  $; similarly, for each $i>2$,
\[
\ker(\lambda_{i-1}\lambda_{i})=\operatorname*{ncl}\left(  \{r_{i-2,1}%
,r_{i-2,2},r_{i-2,3}\},G_{i}\right)  \text{.}%
\]
Moreover, since the elements of $\{r_{i-1,1},r_{i-1,2},r_{i-1,3}\}$ are
precisely the commutators of the elements of $\{r_{i-2,1},r_{i-2,2}%
,r_{i-2,3}\}$,
\[
\ker\left(  \lambda_{i}\right)  \leq\left[  \ker\left(  \lambda_{i-1}%
\lambda_{i}\right)  ,\ker\left(  \lambda_{i-1}\lambda_{i}\right)  \right]  .
\]
So, for the standard augmentation, $L_{i}=\ker\lambda_{i}$,
(\ref{Chosen inverse sequence}) is $\left\{  L_{i}\right\}  $-perfect, hence,
$\mathcal{AP}$-semistable.

Two tasks remain:

\begin{itemize}
\item Prove that (\ref{Chosen inverse sequence}) is not $\mathcal{SAP}%
$-semistable, and

\item Construct 1-ended absolutely inward tame open manifolds with fundamental
groups at infinity representable by (\ref{Chosen inverse sequence}).
\end{itemize}

\noindent Since these tasks are independent, the ordering of the following two
\ subsections is arbitrary.

\subsection{ The Sequence \ref{Chosen inverse sequence} is not $\mathcal{SAP}%
$-semistable}

Let $\mathbb{F}_{n}=\left\langle a_{1},\cdots,a_{n}\mid\ \right\rangle $, the
free group on $n$ generators. We will exploit two standard constructions from
group theory. The \emph{derived series of } $\mathbb{F}_{n}$ is defined by
$\mathbb{F}_{n}^{(0)}=\mathbb{F}_{n}$ and $\mathbb{F}_{n}^{(k+1)}=\left[
\mathbb{F}_{n}^{\left(  k\right)  },\mathbb{F}_{n}^{\left(  k\right)
}\right]  $ for $k\geq0$. The \emph{lower central series of} $\mathbb{F}$ is
given by $\left(  \mathbb{F}_{n}\right)  _{1}=\mathbb{F}_{n}$ and then
$\left(  \mathbb{F}_{n}\right)  _{k+1}=\left[  \left(  \mathbb{F}_{n}\right)
_{k},\mathbb{F}_{n}\right]  $ for $k\geq0$. By inspection $\mathbb{F}%
_{n}^{(k+1)}\leq\mathbb{F}_{n}^{\left(  k\right)  }$, $\left(  \mathbb{F}%
_{n}\right)  _{k+1}\leq\left(  \mathbb{F}_{n}\right)  _{k}$, and
$\mathbb{F}_{n}^{(k)}\leq\left(  \mathbb{F}_{n}\right)  _{k+1}$ for all $k$. A
well-known fact, similar in spirit to our goal in this subsection, is that
$\cap_{k=0}^{\infty}\mathbb{F}_{n}^{\left(  k\right)  }=\{1\}=\cap
_{k=1}^{\infty}\left(  \mathbb{F}_{n}\right)  _{k}$.

The following representation of $\mathbb{F}_{n}$ was discovered by Magnus; our
general reference is \cite{LS}.

\begin{proposition}
\label{Prop: representation}~\cite[Proposition 10.1]{LS} Let $\mathcal{P}_{n}$
be the non-commuting power series ring in indeterminates $\left\{  x_{1}%
,x_{2},\cdots,x_{n}\right\}  $ with $x_{j}^{2}=0$ for $j=1,2,\cdots,n$. Then
the function $\beta\left(  a_{j}\right)  =1+x_{j}$ $\left(  j=1,2,\cdots
,n\right)  $ induces a faithful representation of $\mathbb{F}_{n}$ into
$\mathcal{P}_{n}^{\ast}$, the multiplicative group of units of $\mathcal{P}%
_{n}$.
\end{proposition}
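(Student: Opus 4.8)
The plan is to verify directly that $\beta$ defines a homomorphism into units and then to establish injectivity by a short ``leading monomial'' argument in which the relations $x_j^2=0$ do all the work. For the first point, observe that in $\mathcal{P}_n$ any element of the form $1+u$ with $u$ in the augmentation ideal (the positive-degree part) is a unit, with inverse $\sum_{i\geq 0}(-u)^i$; in particular $1+x_j$ is a unit and, because $x_j^2=0$, its inverse is simply $1-x_j$. Hence, by the universal property of the free group, the assignment $a_j\mapsto 1+x_j$ extends to a group homomorphism $\beta:\mathbb{F}_n\to\mathcal{P}_n^{\ast}$ with $\beta(a_j^{-1})=1-x_j$. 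All the content is in showing $\beta$ is faithful.

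The key observation is that $x_j^2=0$ collapses each syllable to a linear term: by the binomial theorem, $\beta(a_j^{m})=(1+x_j)^{m}=1+m\,x_j$ for every $m\in\mathbb{Z}$. Now take a nontrivial $w\in\mathbb{F}_n$ and write it in reduced syllable form $w=a_{j_1}^{m_1}a_{j_2}^{m_2}\cdots a_{j_k}^{m_k}$ with $k\geq 1$, each $m_\ell\neq 0$, and $j_\ell\neq j_{\ell+1}$ for $1\leq \ell<k$. Then
\[
\beta(w)=(1+m_1 x_{j_1})(1+m_2 x_{j_2})\cdots(1+m_k x_{j_k}),
\]
and, prior to applying the relations $x_j^2=0$, the expansion of this product is the sum over subsets $S=\{\ell_1<\cdots<\ell_r\}\subseteq\{1,\dots,k\}$ of the terms $(m_{\ell_1}\cdots m_{\ell_r})\,x_{j_{\ell_1}}\cdots x_{j_{\ell_r}}$. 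I would single out the full-length monomial $x_{j_1}x_{j_2}\cdots x_{j_k}$, coming from $S=\{1,\dots,k\}$: since $w$ is reduced we have $j_\ell\neq j_{\ell+1}$, so this word contains no factor $x_j^2$ and is therefore a nonzero basis element of $\mathcal{P}_n$, while every other $S$ contributes a word of length $<k$. Consequently no cancellation among the shorter terms, and no collapse caused by the relations $x_j^2=0$, can affect the coefficient of $x_{j_1}\cdots x_{j_k}$ in $\beta(w)$, which is therefore exactly $m_1 m_2\cdots m_k\neq 0$. As that coefficient is $0$ in the unit $1$, we get $\beta(w)\neq 1$, proving injectivity.

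There is essentially no serious obstacle; the only step needing care is the bookkeeping in the last paragraph—precisely, confirming that $x_{j_1}\cdots x_{j_k}$ is a genuine (nonzero) basis monomial of $\mathcal{P}_n$, which is exactly the place where reducedness of $w$ (hence $j_\ell\neq j_{\ell+1}$) is used, and that the truncation $x_j^2=0$ sends any monomial with a consecutive repeat to $0$ rather than to another basis monomial. One could instead simply cite \cite{LS} for this, but the argument above is short enough to record in full, and it isolates the role of the relations $x_j^2=0$, which is what makes this truncated form of the Magnus embedding so clean.
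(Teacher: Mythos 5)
Your argument is correct and complete. Note that the paper itself offers no proof of this proposition\,---\,it is simply quoted from \cite{LS}\,---\,so there is nothing to compare against line by line; what you have written is essentially the classical Magnus leading-coefficient argument, streamlined by the truncation $x_j^2=0$. The two points that genuinely need care are exactly the ones you flag: (i) $\beta(a_j^m)=1+mx_j$ for \emph{all} $m\in\mathbb{Z}$, including negative $m$ (where one uses $(1+x_j)^{-1}=1-x_j$ and then $(1-x_j)^{|m|}=1-|m|x_j$), and (ii) the monomials of $\mathcal{P}_n$ with no consecutive repeated letter form a basis of the quotient, while any monomial containing a factor $x_jx_j$ is zero; reducedness of the syllable decomposition ($j_\ell\neq j_{\ell+1}$) guarantees the full-length monomial $x_{j_1}\cdots x_{j_k}$ is of the first kind, and every proper subset $S$ contributes a strictly shorter monomial, so nothing can cancel the coefficient $m_1\cdots m_k\neq 0$. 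This self-contained proof is a reasonable thing to record, though for the purposes of the paper the citation suffices.
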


In $\mathcal{P}_{n}$, the \emph{fundamental ideal} $\Delta$ is the kernel of
the homomorphism $\rho:\mathcal{P}_{n}\rightarrow{\mathbb{Z}}$ that takes each
$x_{j}$ to 0. The elements of $\Delta$ are all sums of the form $\sum_{\nu
=1}^{\infty}\pi_{\nu}$ where each $\pi_{\nu}$ is a homogeneous polynomial of
degree at least one. Consequently, for any positive integer $k$ the ideal
$\Delta^{k}$ is made of all sums of the form $\sum_{\nu=1}^{\infty}\pi_{\nu}$
where each $\pi_{\nu}$ is a homogeneous polynomial of degree at least $k$.

The next proposition and lemma are useful for monitoring the location of
commutators in a group.

\begin{proposition}
\label{Prop: commutator}~\cite[Proposition 10.2]{LS} Let $\beta:\mathbb{F}%
_{n}\rightarrow\mathcal{P}^{\ast}$ be the representation given above. If
$w_{1},w_{2}\in\mathbb{F}_{n}$ such that $\beta\left(  w_{1}\right)
-1\in\Delta^{r}$ and $\beta\left(  w_{2}\right)  -1\in\Delta^{s}$, then
$\beta\left(  \left[  w_{1},w_{2}\right]  \right)  -1\in\Delta^{r+s}$.
\end{proposition}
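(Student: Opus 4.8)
The plan is to do the entire computation inside the ring $\mathcal{P}_n$, using only two elementary properties of the fundamental-ideal filtration $\mathcal{P}_n\supseteq\Delta\supseteq\Delta^2\supseteq\cdots$ recorded just above: each $\Delta^k$ is a two-sided ideal, and $\Delta^r\cdot\Delta^s\subseteq\Delta^{r+s}$. Both are immediate from the stated description of $\Delta^k$ as the set of series all of whose homogeneous components have degree $\geq k$ (a product of a term of degree $\geq r$ and a term of degree $\geq s$ is either $0$ or has degree $\geq r+s$).

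Write $\beta(w_1)=1+u$ with $u\in\Delta^r$ and $\beta(w_2)=1+v$ with $v\in\Delta^s$. The heart of the argument is the identity
\[
\beta(w_1)\beta(w_2)-\beta(w_2)\beta(w_1)=(1+u)(1+v)-(1+v)(1+u)=uv-vu,
\]
whose right-hand side lies in $\Delta^r\cdot\Delta^s\subseteq\Delta^{r+s}$. Since $\beta$ is a homomorphism into $\mathcal{P}_n^{\ast}$ and $[w_1,w_2]=w_1^{-1}w_2^{-1}w_1w_2$, we have $\beta(w_2)\beta(w_1)\,\beta([w_1,w_2])=\beta(w_1)\beta(w_2)$, so
\[
\beta(w_2)\beta(w_1)\bigl(\beta([w_1,w_2])-1\bigr)=\beta(w_1)\beta(w_2)-\beta(w_2)\beta(w_1)\in\Delta^{r+s}.
\]
Finally $\beta(w_2)\beta(w_1)=\beta(w_2w_1)$ is a unit of $\mathcal{P}_n$ by Proposition \ref{Prop: representation}; multiplying on the left by its inverse and using that $\Delta^{r+s}$ is an ideal gives $\beta([w_1,w_2])-1\in\Delta^{r+s}$, as claimed.

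I expect no genuine obstacle here: the proof is essentially the displayed two-line computation, and its only inputs are the ideal bookkeeping in the first paragraph and the fact, already supplied by Proposition \ref{Prop: representation}, that $\beta$ lands in the unit group (which is what licenses cancelling the factor $\beta(w_2w_1)$, and also what guarantees the commutator relation is transported faithfully through $\beta$). The one point worth a sentence in the write-up is that $\Delta^{r+s}$ is a \emph{two-sided} ideal, so left multiplication by $\beta(w_2w_1)^{-1}$ indeed keeps us inside it; everything else is routine.
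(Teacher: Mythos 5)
Your proof is correct. The paper itself gives no argument for this proposition---it is quoted verbatim from \cite[Proposition 10.2]{LS}---and your computation is essentially the standard one found there: writing $\beta(w_1)=1+u$, $\beta(w_2)=1+v$, reducing the claim to $\beta(w_1)\beta(w_2)-\beta(w_2)\beta(w_1)=uv-vu\in\Delta^{r+s}$, and then cancelling the unit $\beta(w_2w_1)$ using that $\Delta^{r+s}$ is a two-sided ideal. All the ingredients you invoke (the ideal property of $\Delta^k$, the identity $w_2w_1[w_1,w_2]=w_1w_2$ under the paper's convention $[a,b]=a^{-1}b^{-1}ab$, and the invertibility of $\beta(w_2w_1)$ in $\mathcal{P}_n$) check out, so nothing further is needed.
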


By applying Proposition \ref{Prop: commutator} inductively, we obtain the
following useful facts.

\begin{lemma}
\label{Lemma: commutator}For all integers $n,i\geq1$,

\begin{enumerate}
\item $\left\{  \beta(w)-1\mid w\in\mathbb{F}_{n}^{\left(  i\right)
}\right\}  \subseteq\Delta^{2^{i}},$

\item $\left\{  \beta(w)-1\mid w\in\left(  \mathbb{F}_{n}\right)
_{i}\right\}  \subseteq\Delta^{i},$

\item $\bigcap_{k=1}^{\infty}\Delta^{k}=0$, and

\item $\bigcap_{k=1}^{\infty}\mathbb{F}_{n}^{\left(  k\right)  }=\left\{
1\right\}  =\bigcap_{k=1}^{\infty}(\mathbb{F}_{n})_{k}$.
\end{enumerate}
\end{lemma}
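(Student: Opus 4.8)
The plan is to derive all four claims from Proposition~\ref{Prop: commutator} together with one bookkeeping observation. For each integer $r\geq 0$ put
\[
S_{r}=\{w\in\mathbb{F}_{n}\mid \beta(w)-1\in\Delta^{r}\}.
\]
First I would check that each $S_{r}$ is a subgroup of $\mathbb{F}_{n}$: writing $\beta(w_{1})-1=u\in\Delta^{r}$ and $\beta(w_{2})-1=v\in\Delta^{r}$, one has $\beta(w_{1}w_{2})-1=u+v+uv\in\Delta^{r}$ (using $\Delta^{2r}\subseteq\Delta^{r}$) and $\beta(w^{-1})-1=(1+u)^{-1}-1=-u+u^{2}-u^{3}+\cdots\in\Delta^{r}$; moreover $S_{0}=S_{1}=\mathbb{F}_{n}$, since $\rho(\beta(w))=1$ for every $w\in\mathbb{F}_{n}$ (being a product of the units $1+x_{j}$ and their inverses), so $\beta(w)-1\in\ker\rho=\Delta$. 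With $S_{r}$ known to be a subgroup, Proposition~\ref{Prop: commutator} says exactly that every commutator $[w_{1},w_{2}]$ with $w_{1}\in S_{r}$, $w_{2}\in S_{s}$ lies in $S_{r+s}$, hence $[S_{r},S_{s}]\subseteq S_{r+s}$.

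Claims (1) and (2) are then immediate inductions on $i$. For (1): $\mathbb{F}_{n}^{(1)}=[\mathbb{F}_{n},\mathbb{F}_{n}]=[S_{1},S_{1}]\subseteq S_{2}$, and if $\mathbb{F}_{n}^{(i)}\subseteq S_{2^{i}}$ then $\mathbb{F}_{n}^{(i+1)}=[\mathbb{F}_{n}^{(i)},\mathbb{F}_{n}^{(i)}]\subseteq[S_{2^{i}},S_{2^{i}}]\subseteq S_{2^{i+1}}$. For (2): $(\mathbb{F}_{n})_{1}=\mathbb{F}_{n}=S_{1}$, and if $(\mathbb{F}_{n})_{i}\subseteq S_{i}$ then $(\mathbb{F}_{n})_{i+1}=[(\mathbb{F}_{n})_{i},\mathbb{F}_{n}]\subseteq[S_{i},S_{1}]\subseteq S_{i+1}$.

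For (3): by the description of $\Delta^{k}$ recalled just before the lemma, an element of $\bigcap_{k}\Delta^{k}$ is a power series whose homogeneous component of every degree vanishes, hence is $0$; so $\bigcap_{k=1}^{\infty}\Delta^{k}=0$. Finally (4) follows by combining the previous parts with the faithfulness of $\beta$ (Proposition~\ref{Prop: representation}): from (1) and (3),
\[
\bigcap_{k=1}^{\infty}\mathbb{F}_{n}^{(k)}\subseteq\bigcap_{k=1}^{\infty}S_{2^{k}}=\{w\mid\beta(w)-1\in\textstyle\bigcap_{k}\Delta^{2^{k}}\}=\{w\mid\beta(w)=1\}=\{1\},
\]
and likewise $\bigcap_{k}(\mathbb{F}_{n})_{k}\subseteq\bigcap_{k}S_{k}=\{1\}$ from (2) and (3); the reverse inclusions are trivial.

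The only point that needs any care — and hence the main obstacle, such as it is — is the passage from Proposition~\ref{Prop: commutator} (a statement about a single commutator) to the containment $[S_{r},S_{s}]\subseteq S_{r+s}$ for the full commutator subgroups; this is precisely why one must first establish that $S_{r}$ is a genuine subgroup, so that an arbitrary product of such commutators again lands in $S_{r+s}$. Everything else is routine.
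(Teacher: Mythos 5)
Your argument is correct and is exactly the inductive application of Proposition~\ref{Prop: commutator} that the paper invokes (the paper gives no further details), with the one genuinely needed bookkeeping step — that $S_{r}$ is a subgroup, so products of commutators stay in $\Delta^{r+s}$ — supplied correctly. Parts (3) and (4) also match the intended route via the graded description of $\Delta^{k}$ and the faithfulness of $\beta$.
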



We now focus our attention on $\mathbb{F}_{3}$ and its subgroups
$\mathbb{A}_{i}=\operatorname*{ncl}\left(  \left\{  r_{i,1},r_{i,2}%
,r_{i,3}\right\}  ,\mathbb{F}_{3}\right)  $, as defined earlier.

\begin{lemma}
\label{Lemma: topology prelim}For each $k\geq1$ and $j\in\left\{
1,2,3\right\}  $,

\begin{enumerate}
\item $r_{k,j}$ is a member of at least one free basis for $\mathbb{F}%
_{3}^{\left(  k\right)  }$, and \ 

\item $r_{k,j}\in\mathbb{F}_{3}^{\left(  k\right)  }-\mathbb{F}_{3}^{\left(
k+1\right)  }$.
\end{enumerate}
\end{lemma}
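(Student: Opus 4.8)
The plan is to establish statement (1) by induction on $k$, in the sharper form that $\langle r_{k,1},r_{k,2},r_{k,3}\rangle$ is a free factor of $\mathbb{F}_{3}^{(k)}$ that is \emph{freely} generated by those three elements; statement (2) is then a one-line consequence. (Recall that $\mathbb{F}_{3}^{(k)}$ is free by the Nielsen--Schreier theorem, that an element of a free group lies in some free basis exactly when it is primitive, and that a finite subset of a free group extends to a free basis precisely when it is a free basis of a free factor. Recall also the paper's convention $[a,b]=a^{-1}b^{-1}ab$.) The inductive step reduces, via the Kurosh subgroup theorem, to a single computation about $[P,P]$ for $P$ free of rank $3$.

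The technical heart is the following lemma: \emph{if $P$ is a free group with a free basis containing the distinct elements $s_{1},s_{2},s_{3}$, then $\{[s_{1},s_{2}],[s_{1},s_{3}],[s_{2},s_{3}]\}$ extends to a free basis of $[P,P]$.} I would prove it by the Reidemeister--Schreier method, using the transversal $T=\{s_{1}^{p}s_{2}^{q}s_{3}^{r}w : p,q,r\in\mathbb{Z},\ w\text{ in the rest of the basis}\}$ (more precisely, the ``abelian normal form'' transversal) for $[P,P]\trianglelefteq P$; the nontrivial Schreier generators $\gamma(t,s)=ts\cdot\overline{ts}^{\,-1}$ then form a free basis of $[P,P]$. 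A direct check of the relevant cases shows that the three elements $s_{2}^{-1}s_{1}s_{2}s_{1}^{-1}=\gamma(s_{2}^{-1},s_{1})$,\ $s_{3}^{-1}s_{1}s_{3}s_{1}^{-1}=\gamma(s_{3}^{-1},s_{1})$, and $s_{1}s_{2}^{-1}s_{3}^{-1}s_{2}s_{3}s_{1}^{-1}=\gamma(s_{1}s_{2}^{-1}s_{3}^{-1},s_{2})$ are three \emph{distinct} nontrivial members of this Schreier basis. Since each equals $s_{1}[s_{i},s_{j}]s_{1}^{-1}$ for the corresponding pair, applying the automorphism $x\mapsto s_{1}^{-1}xs_{1}$ of $[P,P]$ (conjugation by $s_{1}\in P$, which normalizes $[P,P]$) carries the Schreier basis to another free basis of $[P,P]$ that contains $[s_{1},s_{2}]$, $[s_{1},s_{3}]$, and $[s_{2},s_{3}]$. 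Hence these three commutators form a partial free basis of $[P,P]$, and in particular generate a rank-$3$ free factor. The conjugation-by-$s_{1}$ device is exactly what lets all three commutators be accommodated in a single basis simultaneously.

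With the lemma in hand the induction is routine. For $k=1$: since $\mathbb{F}_{3}^{(1)}=[\mathbb{F}_{3},\mathbb{F}_{3}]$ and $\{r_{1,1},r_{1,2},r_{1,3}\}=\{[a_{2},a_{3}],[a_{1},a_{3}],[a_{1},a_{2}]\}$, the lemma applied with $P=\mathbb{F}_{3}$, $s_{j}=a_{j}$ gives (1) together with the ``free factor'' strengthening. For the step, assume $\mathbb{F}_{3}^{(k)}=P_{k}\ast Q_{k}$ with $P_{k}$ free of rank $3$ on $\{r_{k,1},r_{k,2},r_{k,3}\}$. Then $\mathbb{F}_{3}^{(k+1)}=[\mathbb{F}_{3}^{(k)},\mathbb{F}_{3}^{(k)}]$ is the kernel of the abelianization of $P_{k}\ast Q_{k}$, so $[\mathbb{F}_{3}^{(k)},\mathbb{F}_{3}^{(k)}]\cap P_{k}=[P_{k},P_{k}]$, and the Kurosh subgroup theorem (taking the trivial double-coset representative) exhibits $[P_{k},P_{k}]$ as a free factor of $\mathbb{F}_{3}^{(k+1)}$. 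Applying the lemma to $P_{k}$ (with $s_{j}=r_{k,j}$), the set $\{[r_{k,2},r_{k,3}],[r_{k,1},r_{k,3}],[r_{k,1},r_{k,2}]\}=\{r_{k+1,1},r_{k+1,2},r_{k+1,3}\}$ is a free basis of a rank-$3$ free factor of $[P_{k},P_{k}]$, hence, by transitivity of ``free factor'', of $\mathbb{F}_{3}^{(k+1)}$ — this is the inductive statement at level $k+1$, proving (1). For (2): by (1), $\mathbb{F}_{3}^{(k)}$ is free with a basis containing $r_{k,j}$, so in the free abelian group $\mathbb{F}_{3}^{(k)}/[\mathbb{F}_{3}^{(k)},\mathbb{F}_{3}^{(k)}]=\mathbb{F}_{3}^{(k)}/\mathbb{F}_{3}^{(k+1)}$ the class of $r_{k,j}$ is a nonzero basis vector; thus $r_{k,j}\notin\mathbb{F}_{3}^{(k+1)}$, while $r_{k,j}\in\mathbb{F}_{3}^{(k)}$ by an immediate induction on the defining formulas $r_{i+1,\bullet}=[r_{i,\bullet},r_{i,\bullet}]$.

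The main obstacle is the technical lemma: carrying out the Reidemeister--Schreier bookkeeping cleanly (verifying that $T$ is a Schreier transversal, that the three displayed pairs $(t,s)$ are distinct and yield nontrivial generators, and computing the coset representatives $\overline{ts}$) and spotting the conjugated basic commutators among the Schreier generators. The Kurosh step and the deduction of (2) are then formal. As an aside, (2) alone also admits a proof through the Magnus representation $\beta$ of Proposition~\ref{Prop: representation}: a leading-term refinement of Proposition~\ref{Prop: commutator} shows that $\beta(r_{k,j})-1$ has a nonzero homogeneous component of degree $2^{k}$, whereas Lemma~\ref{Lemma: commutator}(1) forces every element of $\mathbb{F}_{3}^{(k+1)}$ into $\Delta^{2^{k+1}}$; but the abelianization argument above is shorter and needs no new input.
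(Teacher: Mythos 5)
Your proof is correct and follows essentially the same route as the paper's: an inductive Schreier-system (Reidemeister--Schreier) argument for assertion (1) --- the paper simply cites Example 8.1 of \cite{Ma} as the model --- and the observation that $\mathbb{F}_{3}^{(k)}\rightarrow\mathbb{F}_{3}^{(k)}/\mathbb{F}_{3}^{(k+1)}$ is the abelianization of a free group for assertion (2). Your write-up merely supplies the details (the explicit abelian-normal-form transversal, the conjugation-by-$s_{1}$ device, and the Kurosh step) that the paper delegates to the reference.
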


\begin{proof}
Assertion (1) can be obtained from an inductive argument using Schreier
systems. A model argument can be found in \cite[Example 8.1]{Ma}.

Assertion (2) follows from (1), since the quotient map $\mathbb{F}_{3}%
^{k}\rightarrow\mathbb{F}_{3}^{k}/\mathbb{F}_{3}^{k+1}$ is the abelianization
of $\mathbb{F}_{3}^{k}$.
\end{proof}

Since $\mathbb{A}_{i}\leq\mathbb{F}_{3}^{\left(  i\right)  }$, the following
is an easy consequence of Lemmas \ref{Lemma: commutator} and
\ref{Lemma: topology prelim}.

\begin{lemma}
\label{Lemma: specialized commutators}For each $i\geq1$ and $j\in\{1,2,3\}$,

\begin{enumerate}
\item $\beta(r_{i,j})-1\neq0$, and

\item $\left\{  \beta(h)-1\mid h\in\mathbb{A}_{i}\right\}  \subseteq
\Delta^{2^{i}}$.
\end{enumerate}
\end{lemma}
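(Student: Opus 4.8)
The proof is a direct assembly of the preceding results, so the plan is short. For part (2), the essential input is the inclusion $\mathbb{A}_{i}\leq\mathbb{F}_{3}^{\left(  i\right)  }$ recorded just above the lemma, which I would justify by induction on $i$: in the base case $\mathbb{A}_{1}=\operatorname*{ncl}\left(  \{r_{1,1},r_{1,2},r_{1,3}\},\mathbb{F}_{3}\right)  =\left[  \mathbb{F}_{3},\mathbb{F}_{3}\right]  =\mathbb{F}_{3}^{\left(  1\right)  }$, and for the inductive step each generator $r_{i+1,j}$ of $\mathbb{A}_{i+1}$ is, by construction, a commutator of two elements of $\mathbb{F}_{3}^{\left(  i\right)  }$ and hence lies in $\mathbb{F}_{3}^{\left(  i+1\right)  }$; since $\mathbb{F}_{3}^{\left(  i+1\right)  }\trianglelefteq\mathbb{F}_{3}$, the normal closure $\mathbb{A}_{i+1}$ of these generators is contained in it. Then I would invoke Lemma~\ref{Lemma: commutator}(1) with $n=3$: every $w\in\mathbb{F}_{3}^{\left(  i\right)  }$ satisfies $\beta(w)-1\in\Delta^{2^{i}}$, and restricting to $w\in\mathbb{A}_{i}\leq\mathbb{F}_{3}^{\left(  i\right)  }$ gives (2).

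For part (1), the only thing needed is that $r_{i,j}$ is a nontrivial element of $\mathbb{F}_{3}$. This is read off Lemma~\ref{Lemma: topology prelim}(2): one has $r_{i,j}\in\mathbb{F}_{3}^{\left(  i\right)  }\setminus\mathbb{F}_{3}^{\left(  i+1\right)  }$ while $1\in\mathbb{F}_{3}^{\left(  i+1\right)  }$, so $r_{i,j}\neq1$ (alternatively, by Lemma~\ref{Lemma: topology prelim}(1), $r_{i,j}$ belongs to a free basis of a free group and is therefore nontrivial). Since the Magnus representation $\beta$ of Proposition~\ref{Prop: representation} is faithful, hence injective, it follows that $\beta(r_{i,j})\neq\beta(1)=1$, i.e. $\beta(r_{i,j})-1\neq0$.

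I do not expect a genuine obstacle here; the substance is carried entirely by the already-established facts---faithfulness of $\beta$, the appearance of the $r_{k,j}$ in free bases of the successive derived subgroups $\mathbb{F}_{3}^{\left(  k\right)  }$, and the degree estimate of Lemma~\ref{Lemma: commutator}(1). The one point worth checking carefully is the index in $\mathbb{A}_{i}\leq\mathbb{F}_{3}^{\left(  i\right)  }$, since an off-by-one would weaken the exponent $2^{i}$ in (2); but that is exactly what the inductive definition of the $r_{i,j}$ as commutators of the $r_{i-1,j}$ provides. If a sharper form of (1) were ever needed---say $\beta(r_{i,j})-1\in\Delta^{2^{i}}\setminus\Delta^{2^{i}+1}$ with an explicit leading homogeneous term---one would instead propagate leading terms through Proposition~\ref{Prop: commutator}; but for the stated nonvanishing, injectivity of $\beta$ suffices.
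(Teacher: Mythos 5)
Your proposal is correct and matches the paper's intent exactly: the paper gives no explicit proof, merely noting that the lemma is an easy consequence of Lemma~\ref{Lemma: commutator} and Lemma~\ref{Lemma: topology prelim} once one knows $\mathbb{A}_{i}\leq\mathbb{F}_{3}^{\left(i\right)}$, which is precisely the route you take (and you usefully fill in the short induction establishing that inclusion). Nothing further is needed.
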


The definitions of derived and lower central series are clearly applicable to
arbitrary groups. To expand those notions further, the following definition is
useful. For $H\trianglelefteq G$, let $\Omega_{1}\left(  H,G\right)  =H$ and
$\Omega_{k}\left(  H,G\right)  =\left[  \Omega_{k-1}\left(  H,G\right)
,G\right]  $ for $k>1$. By normality, $H=\Omega_{1}\left(  H,G\right)
\geq\Omega_{2}\left(  H,G\right)  \geq\Omega_{3}\left(  H,G\right)  \geq
\cdots$. When $H$ is strongly $G$-perfect, $\Omega_{k}\left(  H,G\right)  =H$
for all $k$.

\begin{proposition}
\label{Prop: omega groups get small}For each $i\geq1$, there exists $p_{i}>0$
and $q_{i}\geq p_{i}$ such that

\begin{enumerate}
\item for each $j\in\left\{  1,2,3\right\}  $, $\beta(r_{i,j})-1\notin
\Delta^{2^{i}+p_{i}}$, and

\item $\{\beta\left(  w\right)  -1\mid w\in\Omega_{q_{i}}\left(
\mathbb{A}_{i},\mathbb{F}_{3}\right)  \}\subseteq\Delta^{2^{i}+p_{i}}$.
\end{enumerate}
\end{proposition}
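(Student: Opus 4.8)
The plan is to prove the two assertions independently: (1) will be an immediate consequence of the lemmas already established, while (2) requires a short induction down the $\Omega$-filtration, built on Proposition~\ref{Prop: commutator}.

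For (1), fix $i$. For each $j\in\{1,2,3\}$, Lemma~\ref{Lemma: specialized commutators} tells us that $\beta(r_{i,j})-1$ is a \emph{nonzero} element of $\Delta^{2^{i}}$; since $\bigcap_{k}\Delta^{k}=0$ by Lemma~\ref{Lemma: commutator}(3), there is a largest integer $m_{i,j}\geq 2^{i}$ with $\beta(r_{i,j})-1\in\Delta^{m_{i,j}}$. Put $M_{i}=\max\{m_{i,1},m_{i,2},m_{i,3}\}$ and $p_{i}=M_{i}+1-2^{i}$; this is a positive integer, since each $m_{i,j}\geq 2^{i}$. Because $2^{i}+p_{i}=M_{i}+1\geq m_{i,j}+1$, maximality of $m_{i,j}$ forces $\beta(r_{i,j})-1\notin\Delta^{2^{i}+p_{i}}$ for every $j$, which is assertion (1).

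For (2), the key step will be to show, by induction on $k\geq 1$, that
\[
\{\beta(w)-1\mid w\in\Omega_{k}(\mathbb{A}_{i},\mathbb{F}_{3})\}\subseteq\Delta^{2^{i}+k-1}.
\]
The base case $k=1$ is exactly Lemma~\ref{Lemma: specialized commutators}(2), since $\Omega_{1}(\mathbb{A}_{i},\mathbb{F}_{3})=\mathbb{A}_{i}$. Before the inductive step I would record two routine facts about the fundamental ideal. First, $\beta(g)-1\in\Delta$ for every $g\in\mathbb{F}_{3}$, because $\rho\circ\beta$ sends each generator $a_{j}$ to $1$ and is therefore the trivial homomorphism into $\{\pm1\}$. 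Second, since $\Delta=\ker\rho$ is a two-sided ideal, so is $\Delta^{m}$ for each $m$, and then the identities $\beta(ab)-1=(\beta(a)-1)(\beta(b)-1)+(\beta(a)-1)+(\beta(b)-1)$ and $\beta(a^{-1})-1=-\beta(a)^{-1}(\beta(a)-1)$ show that $H_{m}:=\{w\in\mathbb{F}_{3}\mid\beta(w)-1\in\Delta^{m}\}$ is a subgroup of $\mathbb{F}_{3}$. Now $\Omega_{k}(\mathbb{A}_{i},\mathbb{F}_{3})=[\Omega_{k-1}(\mathbb{A}_{i},\mathbb{F}_{3}),\mathbb{F}_{3}]$ is generated by commutators $[u,g]$ with $u\in\Omega_{k-1}(\mathbb{A}_{i},\mathbb{F}_{3})$ and $g\in\mathbb{F}_{3}$; by the inductive hypothesis $\beta(u)-1\in\Delta^{2^{i}+k-2}$ and $\beta(g)-1\in\Delta^{1}$, so Proposition~\ref{Prop: commutator} yields $\beta([u,g])-1\in\Delta^{2^{i}+k-1}$. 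Hence every generator of $\Omega_{k}(\mathbb{A}_{i},\mathbb{F}_{3})$ lies in $H_{2^{i}+k-1}$, and since $H_{2^{i}+k-1}$ is a subgroup, $\Omega_{k}(\mathbb{A}_{i},\mathbb{F}_{3})\subseteq H_{2^{i}+k-1}$, completing the induction.

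Finally, set $q_{i}=p_{i}+1$, so that $q_{i}\geq p_{i}$; applying the displayed claim with $k=q_{i}$ gives $\{\beta(w)-1\mid w\in\Omega_{q_{i}}(\mathbb{A}_{i},\mathbb{F}_{3})\}\subseteq\Delta^{2^{i}+q_{i}-1}=\Delta^{2^{i}+p_{i}}$, which is assertion (2). I do not expect a genuine obstacle here; the only points that need care are checking that $H_{m}$ really is a subgroup (so that it suffices to control the generators of $\Omega_{k}$), and observing that the $\Delta$-valuation of each $r_{i,j}$ is finite — the latter being exactly where $\beta(r_{i,j})-1\neq 0$ (Lemma~\ref{Lemma: specialized commutators}(1)) and $\bigcap_{k}\Delta^{k}=0$ (Lemma~\ref{Lemma: commutator}(3)) enter.
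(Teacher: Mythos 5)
Your argument is correct and is precisely the fleshed-out version of the paper's (very terse) proof: part (1) extracts $p_i$ from $\beta(r_{i,j})-1\neq 0$ together with $\bigcap_k\Delta^k=0$, and part (2) is the "inductive application of Proposition \ref{Prop: commutator}" down the $\Omega$-filtration that the paper invokes, with the useful extra care of verifying that $\{w\mid \beta(w)-1\in\Delta^m\}$ is a subgroup so that checking generators of $\Omega_k$ suffices.
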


\begin{proof}
Let $i$ be fixed. Existence of $p_{i}$ follows from item (3) of Lemma
\ref{Lemma: commutator}. Existence of $q_{i}$ may be obtained from an
inductive application of \ref{Prop: commutator}.
\end{proof}


We shift focus one more time; from $\mathbb{F}_{3}$ and its subgroups to the
quotient groups $G_{i}=\mathbb{F}_{3}/\mathbb{A}_{i}$ and their subgroups. In
doing so, we will allow a word in the generators of $\mathbb{F}_{3}$ to
represent both an element of $\mathbb{F}_{3}$ and the corresponding element of
a $G_{i}$. For example, recalling that $\lambda_{i+1,j}=\lambda_{i+1}%
\circ\cdots\circ\lambda_{j}:G_{j}\rightarrow G_{i}$, we say $\ker\left(
\lambda_{i+1,j}\right)  =\operatorname*{ncl}\left(  \left\{  r_{i,1}%
,r_{i,2},r_{i,3}\right\}  ,G_{j}\right)  $.

The following is simple but useful.

\begin{lemma}
\label{Lemma: images of omega}Suppose $\lambda:G\rightarrow G^{\prime}$ is a
surjective homomorphism, $H\trianglelefteq G$, and $q\geq0$. Then
$\lambda\left(  \Omega_{q}\left(  H,G\right)  \right)  =\Omega_{q}\left(
\lambda\left(  H\right)  ,G^{\prime}\right)  $.
\end{lemma}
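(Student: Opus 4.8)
The plan is to argue by induction on $q$, the only real ingredient being the elementary fact that a homomorphism carries the commutator of two subgroups onto the commutator of their images.

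First I would record that fact: if $\mu\colon G\rightarrow G'$ is any homomorphism and $A,B\leq G$, then $\mu\left(\left[A,B\right]\right)=\left[\mu\left(A\right),\mu\left(B\right)\right]$. Indeed, $\left[A,B\right]$ is by definition generated by the commutators $\left[a,b\right]$ with $a\in A$ and $b\in B$, and $\mu$ sends each such element to $\left[\mu\left(a\right),\mu\left(b\right)\right]$. Since the image under a homomorphism of the subgroup generated by a set is the subgroup generated by the image of that set, $\mu\left(\left[A,B\right]\right)$ is generated by $\left\{\left[\mu\left(a\right),\mu\left(b\right)\right]\mid a\in A,\ b\in B\right\}$, which is precisely a generating set for $\left[\mu\left(A\right),\mu\left(B\right)\right]$. (No surjectivity is needed here; it enters below only through the identity $\lambda\left(G\right)=G'$, and to guarantee that $\lambda\left(H\right)\trianglelefteq G'$ so that the right-hand side is a legitimate $\Omega$-term.)

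Now I would induct on $q$. For $q=1$ (and likewise for $q=0$ under the convention $\Omega_{0}=\Omega_{1}$) we have $\lambda\left(\Omega_{1}\left(H,G\right)\right)=\lambda\left(H\right)=\Omega_{1}\left(\lambda\left(H\right),G'\right)$, so the base case is immediate. Assuming the identity for $q-1$, and combining the definition of $\Omega_{q}$, the commutator fact above, surjectivity in the form $\lambda\left(G\right)=G'$, and the inductive hypothesis, one computes
\begin{align*}
\lambda\left(\Omega_{q}\left(H,G\right)\right)
&=\lambda\left(\left[\Omega_{q-1}\left(H,G\right),G\right]\right)
=\left[\lambda\left(\Omega_{q-1}\left(H,G\right)\right),\lambda\left(G\right)\right]\\
&=\left[\Omega_{q-1}\left(\lambda\left(H\right),G'\right),G'\right]
=\Omega_{q}\left(\lambda\left(H\right),G'\right),
\end{align*}
completing the induction.

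There is no genuine obstacle here; the statement is purely formal, and the only point requiring a moment's care is checking that the objects on the right-hand side are well defined, i.e. that $\lambda\left(H\right)$ is normal in $G'$ — which is exactly where the surjectivity hypothesis is used.
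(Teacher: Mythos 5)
Your proof is correct, and it is the obvious intended argument: the paper states this lemma without proof (introducing it only as ``simple but useful''), and your induction on $q$, resting on the fact that a homomorphism carries $\left[A,B\right]$ onto $\left[\mu(A),\mu(B)\right]$ together with $\lambda(G)=G'$, is exactly the routine verification the authors are omitting. Your side remarks — that surjectivity is needed only for $\lambda(G)=G'$ and for the normality of $\lambda(H)$ in $G'$, and that the case $q=0$ requires a convention since the paper only defines $\Omega_{k}$ for $k\geq1$ — are both accurate.
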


Lemma \ref{Lemma: images of omega} ensures that, for each $i<k$ and all
$q\geq0$, the quotient maps $\mathbb{F}_{3}\twoheadrightarrow G_{k}$ restrict
to epimorphisms
\begin{equation}
\Omega_{q}(\mathbb{A}_{i},\mathbb{F}_{3})\twoheadrightarrow\Omega_{q}\left(
\operatorname*{ncl}\left(  \left\{  r_{i,1},r_{i,2},r_{i,3}\right\}
,G_{k}\right)  ,G_{k}\right)  \text{.}%
\end{equation}
\label{equation: key epimorphism}

\begin{proposition}
\label{Prop: no omega} For $p_{i}$ and $q_{i}$ as chosen in Proposition
\ref{Prop: omega groups get small}, and each $j\in\left\{  1,2,3\right\}  $,
$r_{i,j}\notin\Omega_{q_{i}}\left(  \ker\left(  \lambda_{i+1,k}\right)
,G_{k}\right)  $ whenever $2^{k}\geq2^{i}+p_{i}$.
\end{proposition}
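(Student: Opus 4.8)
The plan is to argue by contradiction, pushing the question from the quotient $G_k$ up to the free group $\mathbb{F}_3$, where the Magnus embedding $\beta$ and the $\Delta$-adic filtration of $\mathcal{P}_3$ are available. So suppose that for some $k$ with $2^k \ge 2^i + p_i$ we have $r_{i,j} \in \Omega_{q_i}\left(\ker(\lambda_{i+1,k}),G_k\right)$. First observe that $2^k > 2^i$ forces $k > i$, so the restricted epimorphism recorded just before the proposition applies: combining it with Lemma~\ref{Lemma: images of omega} and the identification $\ker(\lambda_{i+1,k}) = \operatorname{ncl}(\{r_{i,1},r_{i,2},r_{i,3}\},G_k)$, the projection $\pi\colon\mathbb{F}_3 \twoheadrightarrow G_k$ maps $\Omega_{q_i}(\mathbb{A}_i,\mathbb{F}_3)$ onto $\Omega_{q_i}\left(\ker(\lambda_{i+1,k}),G_k\right)$. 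Hence there is $w \in \Omega_{q_i}(\mathbb{A}_i,\mathbb{F}_3)$ with $\pi(w) = \pi(r_{i,j})$; equivalently, $w^{-1}r_{i,j} \in \ker\pi = \mathbb{A}_k$, where $r_{i,j}$ is read as the evident element of $\mathbb{F}_3$.

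Next I would estimate the $\Delta$-adic orders of the two factors in the factorization $r_{i,j} = w\cdot(w^{-1}r_{i,j})$ in $\mathbb{F}_3$. By Proposition~\ref{Prop: omega groups get small}(2), $\beta(w) - 1 \in \Delta^{2^i + p_i}$. By Lemma~\ref{Lemma: specialized commutators}(2) applied with index $k$ in place of $i$ (using $\mathbb{A}_k \le \mathbb{F}_3^{(k)}$), $\beta(w^{-1}r_{i,j}) - 1 \in \Delta^{2^k} \subseteq \Delta^{2^i + p_i}$, the inclusion holding because $2^k \ge 2^i + p_i$. Writing $u = \beta(w)$, $v = \beta(w^{-1}r_{i,j})$ and $m = 2^i + p_i$, the identity $uv - 1 = (u-1)(v-1) + (u-1) + (v-1)$ together with $\Delta^{2m} \subseteq \Delta^m$ yields $\beta(r_{i,j}) - 1 = uv - 1 \in \Delta^{m} = \Delta^{2^i + p_i}$. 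This contradicts Proposition~\ref{Prop: omega groups get small}(1), which asserts $\beta(r_{i,j}) - 1 \notin \Delta^{2^i + p_i}$, and the proof is done.

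Most of this is bookkeeping once the right objects are named; the single point that needs genuine care --- and is the crux of the argument --- is the lift from $G_k$ back to $\mathbb{F}_3$: a hypothetical expression of $r_{i,j}$ as an iterated commutator of depth $q_i$ living inside $\ker(\lambda_{i+1,k}) \trianglelefteq G_k$ must be realized by an iterated commutator of the same depth living inside $\mathbb{A}_i \trianglelefteq \mathbb{F}_3$, so that Proposition~\ref{Prop: omega groups get small}(2) can be brought to bear on a preimage $w$. This is exactly what the restricted $\Omega$-group epimorphism preceding the proposition supplies (via Lemma~\ref{Lemma: images of omega}). After that, everything reduces to the Magnus calculus already packaged in Lemmas~\ref{Lemma: commutator} and~\ref{Lemma: specialized commutators} and in Proposition~\ref{Prop: omega groups get small}.
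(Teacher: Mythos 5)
Your proof is correct and follows essentially the same route as the paper's: lift via the key epimorphism and Lemma \ref{Lemma: images of omega} to get $w\in\Omega_{q_i}\left(\mathbb{A}_i,\mathbb{F}_3\right)$ agreeing with $r_{i,j}$ modulo $\mathbb{A}_k$, then use the Magnus embedding estimates to place $\beta(r_{i,j})-1$ in $\Delta^{2^i+p_i}$, contradicting the choice of $p_i$. The only cosmetic difference is that the paper writes $r_{i,j}=hw$ with $h\in\mathbb{A}_k$ rather than $r_{i,j}=w\cdot(w^{-1}r_{i,j})$, which changes nothing.
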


\begin{proof}
Suppose $r_{i,j}\in\Omega_{q_{i}}\left(  \ker\left(  \lambda_{i+1,k}\right)
,G_{k}\right)  =\Omega_{q_{i}}\left(  \operatorname*{ncl}\left(  \left\{
r_{i,1},r_{i,2},r_{i,3}\right\}  ,G_{k}\right)  ,G_{k}\right)  $. Surjection
(\ref{equation: key epimorphism}) provides a $w\in\Omega_{q_{i}}\left(
\mathbb{A}_{i},\mathbb{F}_{3}\right)  $ with cosets $\mathbb{A}_{k}\cdot
r_{i,j}=\mathbb{A}_{k}\cdot w$. Consequently, there is an $h\in\mathbb{A}_{k}$
with $r_{i,j}=hw$ in $\mathbb{F}_{3}$. Then%
\[%
\begin{array}
[c]{rcl}%
\beta(r_{i,j})-1 & = & \beta(h)\beta\left(  w\right)  -1\\
&  & \beta(h)\beta\left(  w\right)  -\beta(h)+\beta(h)-1\\
&  & \beta(h)\left(  \beta\left(  w\right)  -1\right)  +\left(  \beta
(h)-1\right)
\end{array}
\]
Since $\beta\left(  w\right)  -1\in\Delta^{2^{i}+p_{i}}$ and $\beta
(h)-1\in\Delta^{2^{k}}\subseteq\Delta^{2^{i}+p_{i}}$, then $\beta
(r_{i,j})-1\in\Delta^{2^{i}+p_{i}}$, violating the choice of $p_{i}$.
\end{proof}

We are now ready for the main result of this subsection..

\begin{theorem}
\label{Theorem: example} The inverse sequence $\left\{  G_{i},\lambda
_{i}\right\}  _{i=0}^{\infty}$ is not $\mathcal{SAP}$-semistable. In fact,
$\left\{  G_{i},\lambda_{i}\right\}  _{i=0}^{\infty}$ is not pro-isomorphic to
any inverse sequence $\left\{  H_{i},\mu_{i}\right\}  $ of surjections that
satisfies the strong $\left\{  H_{i}\right\}  $-perfectness property.
\end{theorem}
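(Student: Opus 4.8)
The plan is to argue by contradiction, exploiting the numerical ``depth'' data recorded in Propositions~\ref{Prop: omega groups get small} and~\ref{Prop: no omega}. Suppose $\{G_i,\lambda_i\}$ were pro-isomorphic to an inverse sequence $\{H_i,\mu_i\}$ of surjections satisfying the strong $\{H_i\}$-perfectness property. By the Remark following the definitions we may as well assume $\{H_i\}$ is the standard augmentation, so $K_i'=\ker\mu_i$ is strongly $J_i'$-perfect with $J_i'=\mu_i^{-1}(K_{i-1}')$; moreover, by Proposition~\ref{Prop: nearly sub}, passing to subsequences preserves this. The pro-isomorphism provides (after passing to subsequences and relabeling on both sides) commuting ladders relating the $G$'s and the $H$'s, and I would first translate the strong perfectness of $\ker\mu_i$ into a statement about the $G$-side. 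Concretely, for a bonding composition $\lambda_{i+1,k}\colon G_k\to G_i$ of sufficiently long range, the ``down-up-down'' triangles of the pro-isomorphism let us conclude that $\ker(\lambda_{i+1,k})$ is strongly $\lambda_{i+1,k}^{-1}(\ker(\lambda_{i',i''}))$-perfect for suitable indices; the key consequence I want to extract is that $\ker(\lambda_{i+1,k})$ equals $[\ker(\lambda_{i+1,k}),\,\lambda_{i+1,k}^{-1}(\text{kernel of a shorter composition})]$, hence in particular $\ker(\lambda_{i+1,k})\subseteq\Omega_q\bigl(\ker(\lambda_{i+1,k}),G_k\bigr)$ for \emph{every} $q$, because strong $J$-perfectness of $K$ forces $\Omega_q(K,K)=K$ for all $q$ and $K\le J$.

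The heart of the contradiction is then the following: strong $J$-perfectness (in any of its equivalent disguises) of $\ker(\lambda_{i+1,k})$ inside $G_k$ would force $r_{i,j}$, which generates this kernel normally, to lie in $\Omega_{q}\bigl(\ker(\lambda_{i+1,k}),G_k\bigr)$ for all $q$ — in particular for $q=q_i$. But Proposition~\ref{Prop: no omega} says exactly that $r_{i,j}\notin \Omega_{q_i}\bigl(\ker(\lambda_{i+1,k}),G_k\bigr)$ as soon as $2^k\ge 2^i+p_i$, i.e.\ for all large $k$. Since the pro-isomorphism forces us into ranges $i\ll k$ with $k$ arbitrarily large, this is the contradiction. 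So the skeleton is: (1) reduce to the standard augmentation; (2) use the pro-isomorphism ladders to push the strong perfectness hypothesis from the $H$-side to long bonding compositions on the $G$-side; (3) unwind strong perfectness to an $\Omega$-stability statement $\ker(\lambda_{i+1,k})=\Omega_q(\ker(\lambda_{i+1,k}),G_k)$ for all $q$, hence $r_{i,j}\in\Omega_{q_i}(\ker(\lambda_{i+1,k}),G_k)$; (4) invoke Proposition~\ref{Prop: no omega} to contradict this for $k$ large.

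I expect step (2) to be the main obstacle: carefully bookkeeping how the pro-isomorphism diagram, which only commutes after passing to subsequences on \emph{both} sequences, interacts with the \emph{relative} perfectness condition, whose $J$-groups are defined via preimages of kernels of (subsequence) bonding maps. The subtlety is that under a pro-isomorphism the kernels of bonds do not correspond exactly, only ``up to pro-isomorphism,'' so I must check that strong perfectness of $\ker\mu_i$ really does descend to a strong-perfectness (or at least an $\Omega$-stability) statement for some long composition $\lambda_{i+1,k}$ with respect to a \emph{smaller} kernel-preimage; Proposition~\ref{Prop: nearly sub} and Lemma~\ref{basic implications}(1) (monotonicity of [strong] $L$-perfectness in $L$) are the tools that make this go through, since passing to a larger $J$ only weakens the condition, and passing to subsequences preserves it. Once the descent is in place, steps (3) and (4) are essentially the computations already packaged in Lemma~\ref{Lemma: images of omega}, Proposition~\ref{Prop: omega groups get small}, and Proposition~\ref{Prop: no omega}, so the remaining work is routine. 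Finally, since $\mathcal{SAP}$-semistability is by definition a special case (it is pro-isomorphism to \emph{some} surjective sequence with the strong $\{L_i\}$-perfectness property for a small, hence after the Remark the standard, augmentation), the first assertion of the theorem follows from the second.
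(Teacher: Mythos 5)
Your overall strategy is the paper's: derive a contradiction by pushing the strong perfectness condition through the pro-isomorphism ladder into the $\Omega$-groups of $\left\{  G_{i}\right\}  $ and then invoking the Magnus-embedding depth estimates of Propositions \ref{Prop: omega groups get small} and \ref{Prop: no omega}. But two of your steps do not go through as written. First, step (1) is not a legitimate reduction. The second (stronger) assertion concerns the strong $\left\{  H_{i}\right\}  $-perfectness property, i.e.\ the \emph{maximal} augmentation, whose content is $\ker\mu_{i}=\left[  \ker\mu_{i},H_{i}\right]  $; by Lemma \ref{basic implications} this is the \emph{weakest} of the relative perfectness conditions. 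The Remark you cite only identifies \emph{small} augmentations with the standard one, and replacing the maximal augmentation by the standard one strictly strengthens the hypothesis, so a contradiction derived from it refutes only $\mathcal{SAP}$-semistability (the first assertion) and loses the ``In fact.'' The fix is simply to drop the reduction: run the argument from $\ker\mu_{i}=\left[  \ker\mu_{i},H_{i}\right]  $ directly, which is all you use downstream since you only ever form $\Omega_{q}\left(  \cdot,G_{k}\right)  $ with the full ambient group.

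Second, the descent in steps (2)--(3) does not yield the matching-index statement $\ker\left(  \lambda_{i+1,k}\right)  \subseteq\Omega_{q}\left(  \ker\left(  \lambda_{i+1,k}\right)  ,G_{k}\right)  $, and you have not supplied the idea that closes this gap. What the ladder actually gives (with $u_{2}:H_{2}\twoheadrightarrow G_{i_{2}}$, $\lambda_{i_{1}+1,i_{2}}\circ u_{2}=u_{1}\circ\mu_{2}$ and $\lambda_{i_{0}+1,i_{2}}\circ u_{2}=u_{0}\circ\mu_{1,2}$) is an element-wise statement: a preimage $\alpha^{\prime}$ of $r_{i_{1},j}$ lies in $\ker\mu_{1,2}=\Omega_{q}\left(  \ker\mu_{1,2},H_{2}\right)  $, whence $r_{i_{1},j}\in\Omega_{q}\left(  \ker\left(  \lambda_{i_{0}+1,i_{2}}\right)  ,G_{i_{2}}\right)  $ for all $q$ --- the kernel inside the $\Omega$ is that of the \emph{longer} composition, not of $\lambda_{i_{1}+1,i_{2}}$. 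Consequently Proposition \ref{Prop: no omega} cannot be quoted verbatim (there the element and the kernel carry the same index); one must rerun its Magnus-embedding computation with mismatched indices: writing $r_{i_{1},j}=hw$ with $h\in\mathbb{A}_{i_{2}}$ and $w\in\Omega_{q}\left(  \mathbb{A}_{i_{0}},\mathbb{F}_{3}\right)  $ and letting $q$ grow forces $\beta(r_{i_{1},j})-1\in\Delta^{2^{i_{2}}}\subseteq\Delta^{2^{i_{1}}+p_{i_{1}}}$ once $2^{i_{2}}\geq2^{i_{1}}+p_{i_{1}}$, contradicting the choice of $p_{i_{1}}$. Finally, your justification ``strong $J$-perfectness of $K$ forces $\Omega_{q}(K,K)=K$'' is false as stated ($\Omega_{q}(K,K)$ is the lower central series of $K$ itself); what is true, and what you need, is $\Omega_{q}(K,G)=K$ for the ambient group $G\supseteq J$.
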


\begin{proof}
We proceed directly to the stronger assertion. Suppose $\left\{  G_{i}%
,\lambda_{i}\right\}  $ is pro-isomorphic to an inverse sequence $\left\{
H_{i},\mu_{i}\right\}  $ of surjections, that is strongly $\left\{
H_{i}\right\}  $-perfect; in other words, $\ker\mu_{i}=[\ker\mu_{i},H_{i}]$
for all $i$.

By Proposition \ref{Prop: nearly sub}, each subsequence of $\left\{  H_{i}%
,\mu_{i}\right\}  $ satisfies the same essential property, so by our
assumption, $\left\{  G_{i},\lambda_{i}\right\}  $ contains a subsequence that
fits into a commutative diagram of the following form:%

\[
\begin{diagram}
G_{i_{0}} & & \lTo^{\lambda_{i_{0}+1,i_{1}}} & & G_{i_{1}} & &
\lTo^{\lambda_{i_{1}+1,i_{2}}} & & G_{i_{2}} & & \lTo^{\lambda_{i_{2}+1,i_{3}}}& & G_{i_{3}}& \cdots\\
& \luTo^{u_{0}} & & \ldTo^{d_{1}} & & \luTo^{u_{1}} & & \ldTo^{d_{2}}   & & \luTo^{u_{2}} & & \ldTo^{d_{3}}   &\\
& & H_{0} & & \lOnto^{\mu_{1}} & & H_{1} & &
\lOnto^{\mu_{2}}& & H_{2} & & \lOnto^{\mu_{3}} & & \cdots
\end{diagram}
\]


Passing to a further subsequence if necessary, we may assume that $2^{i_{n}%
}\geq2^{i_{n-1}}+p_{i_{n-1}}$ for all $n$.

By Lemma \ref{Lemma: topology prelim}, $1\neq r_{i_{1},j}\in
\operatorname*{ker}\left(  \lambda_{i_{1}+1,i_{2}}\right)  \leq G_{i_{2}}$.
Choose $\alpha^{\prime}\in H_{2}$ with $u_{2}(\alpha^{\prime})=r_{i_{1},j}$.
Then, $\alpha^{\prime}\in\operatorname*{ker}\left(  \mu_{1,2}\right)  $, and
consequently $\alpha^{\prime}\in\left[  \operatorname*{ker}\left(  \mu
_{1,2}\right)  ,H_{2}\right]  $, since $\operatorname*{ker}\left(  \mu
_{1,2}\right)  $ is strongly $H_{2}$-perfect (again using Proposition
\ref{Prop: nearly sub}). Therefore $\alpha^{\prime}\in\Omega_{q}\left(
\operatorname*{ker}\left(  \mu_{1,2}\right)  ,H_{2}\right)  $ for all $q$.
Moreover, since $u_{2}\left(  \operatorname*{ker}\left(  \mu_{1,2}\right)
\right)  \subseteq\operatorname*{ker}\left(  \lambda_{i_{0}+1,i_{2}}\right)
$,
\[
r_{i_{1},j}=u_{2}(\alpha^{\prime})\in\Omega_{q}\left(  u_{2}\left(
\operatorname*{ker}\left(  \mu_{1,2}\right)  \right)  ,G_{i_{2}}\right)
\subseteq\Omega_{q}\left(  \operatorname*{ker}\left(  \lambda_{i_{0}+1,i_{2}%
}\right)  ,G_{i_{2}}\right)
\]
for all $q$, thereby contradicting Proposition \ref{Prop: no omega}.


\end{proof}

\subsection{Construction of the examples}

The goal of this subsection is to construct, for each $n\geq6$, a 1-ended open
manifold $M^{n}$ that is absolutely inward tame and has fundamental group at
infinity represented by the inverse sequence (\ref{Chosen inverse sequence}).
By Theorem \ref{Theorem: NPC Characterization--simple version} or
\ref{NPCT technical version}, such an example fails to be nearly
pseudo-collarable, thus completing the proof of Theorem
\ref{Theorem: Existence of counterexamples}.

\subsubsection{Overview}

We will construct $M^{n}$ as a countable union of codimension 0 submanifolds
\[
M^{n}=C_{1}\cup A_{1}\cup A_{2}\cup A_{3}\cup\medskip\cdots
\]
where $C_{1}$ is a compact \textquotedblleft core\textquotedblright\ and
$\left\{  \left(  A_{i},\Gamma_{i},\Gamma_{i+1}\right)  \right\}  $ is a
sequence of compact cobordisms between closed connected $\left(  n-1\right)
$-manifolds where $A_{i}\cap A_{i+1}=\Gamma_{i+1}$ for each $i\geq1$, and
$\partial C_{1}=\Gamma_{1}$. Letting
\[
N_{i}=A_{i}\cup A_{i+1}\cup A_{i+2}\cup\medskip\cdots
\]
gives a preferred end structure $\left\{  N_{i}\right\}  $ with $\partial
N_{i}=\Gamma_{i}$ for each $i$. See Figure \ref{Fig3-Union of cobordisms}.%
\begin{figure}
[ptb]
\begin{center}
\includegraphics[
trim=0.347738in 0.686478in 0.000000in 0.254219in,
height=2.9187in,
width=5.047in
]%
{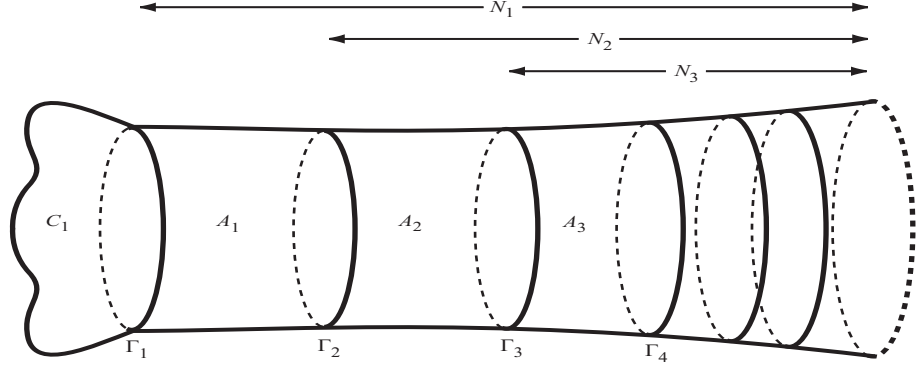}%
\caption{$M^{n}=C_{1}\cup A_{1}\cup A_{2}\cup A_{3}\cup\cdots$}%
\label{Fig3-Union of cobordisms}%
\end{center}
\end{figure}

So that $\operatorname*{pro}$-$\pi_{1}\left(  \varepsilon\left(  M^{n}\right)
\right)  $ is represented by (\ref{Chosen inverse sequence}), the $A_{i}$ will
be constructed to satisfy:\smallskip

\begin{enumerate}
\item[(a)] For all $i\geq1$, $\pi_{1}(\Gamma_{i},p_{i})\cong G_{i}$ and
$\Gamma_{i}\hookrightarrow A_{i}$ induces a $\pi_{1}$-isomorphism, and
\smallskip

\item[(b)] The isomorphism between $\pi_{1}(\Gamma_{i},p_{i})$ and $G_{i}$ may
be chosen so that the following diagram commutes:%

\[%
\begin{array}
[c]{ccccc}%
G_{i} &  & \overset{\lambda_{i+1}}{\longleftarrow} &  & G_{i+1}\\
\downarrow\cong &  &  &  & \downarrow\cong\\
\pi_{1}(\Gamma_{i},p_{i}) & \overset{\cong}{\longleftarrow} & \pi_{1}\left(
A_{i},p_{i}\right)  & \overset{\psi_{i+1}}{\longleftarrow} & \pi_{1}%
(\Gamma_{i+1},p_{i+1})\medskip
\end{array}
\]
\medskip Here $\psi_{i+1}$ is the composition%
\[
\pi_{1}\left(  A_{i},p_{i}\right)  \overset{\widehat{\rho}_{i}}{\leftarrow}%
\pi_{1}\left(  A_{i},p_{i+1}\right)  \overset{\iota_{i+1}}{\longleftarrow}%
\pi_{1}\left(  \Gamma_{i+1},p_{i+1}\right)
\]
where $\iota_{i+1}$ is induced by inclusion and $\widehat{\rho}_{i}$ is a
change-of-basepoint isomorphism\ with respect to a path $\rho_{i}$ in $A_{i}$
between $p_{i}$ and $p_{i+1}$.
\end{enumerate}

From there it follows from Van Kampen's theorem that each $\Gamma_{i}=\partial
N_{i}\hookrightarrow N_{i}$ induces a\ $\pi_{1}$-isomorphism, so by repeated
application of (a) and (b), the inverse sequence%
\[
\pi_{1}\left(  N_{1},p_{1}\right)  \overset{\mu_{2}}{\longleftarrow}\pi
_{1}\left(  N_{2},p_{2}\right)  \overset{\mu_{3}}{\longleftarrow}\pi
_{1}\left(  N_{3},p_{3}\right)  \overset{\mu_{4}}{\longleftarrow}%
\]
is isomorphic to (\ref{Chosen inverse sequence}).

It will be also be shown that each $N_{i}$ has finite homotopy type; so
$M^{n}$ is absolutely inward tame. That argument requires specific details of
the construction; it will be presented later.

\subsubsection{Details of the construction}

Recall that a $p$-handle $h^{p}$ attached to an $n$-manifold $P^{n}$ and a
$\left(  p+1\right)  $-handle $h^{p+1}$ attached to $P^{n}\cup h^{p}$ form a
\emph{complementary pair} if the attaching sphere of $h^{p+1}$ intersects the
belt sphere of $h^{p}$ transversely in a single point. In that case $P^{n}\cup
h^{p}\cup h^{p+1}\approx P^{n}$; moreover, we may arrange (by an isotopy of
the attaching sphere of $h^{p+1}$) that $P^{n}\cap(h^{p}\cup h^{p+1})$ is an
$\left(  n-1\right)  $-ball in $\partial P^{n}$. Conversely, for any ball
$B^{n-1}\subseteq\partial P^{n}$, one may introduce a pair of complementary
handles $P^{n}\cup h^{p}\cup h^{p+1}$ so that $P^{n}\cap(h^{p}\cup
h^{p+1})=B^{n-1}$. We call $\left(  h^{p},h^{p+1}\right)  $ a \emph{trivial
handle pair.} Note that the difference between a complementary pair and
trivial pair is just a matter of perspective. In general, we say that $h^{p}$
is \emph{attached trivially} to $P^{n}$ if it is possible to attach an
$h^{p+1}$ so that $\left(  h^{p},h^{p+1}\right)  $ is a complementary
pair.\medskip

After a preliminary step where we construct the core manifold $C_{1}$, our
proof proceeds inductively. At the $i^{\text{th}}$ stage we construct the
cobordism $\left(  A_{i},\Gamma_{i},\Gamma_{i+1}\right)  $, along with a
compact manifold $C_{i+1}$ with $\partial C_{i+1}=\Gamma_{i+1}$, to be used in
the following stage. Throughout the construction, we abuse notation slightly
by letting $\partial C_{i}\times\left[  0,\varepsilon\right]  $ denote a small
regular neighborhood of $\partial C_{i}$ in $C_{i}$ and $\Gamma_{i}%
\times\left[  0,\varepsilon\right]  $ to denote a small regular neighborhood
of $\Gamma_{i}$ in $A_{i}$\medskip

\noindent\textbf{Step 0. }(Preliminaries)

Let $C_{0}$ be the $n$-manifold obtained by attaching three orientable
1-handles $\left\{  h_{0,j}^{1}\right\}  _{j=1}^{3}$ to the $n$-ball $B^{n}$.
Choose a basepoint $p_{0}\in\partial C_{0}$ and let $a_{1},a_{2}$, and $a_{3}$
be be embedded loops in $\partial C_{0}$ intersecting only at $p_{0}$. Abuse
notation slightly by writing
\[
\pi_{1}\left(  \partial C_{0}\right)  =\pi_{1}\left(  C_{0}\right)
=\left\langle a_{1},a_{2},a_{3}\mid\ \right\rangle .
\]
A convenient way to arrange that the 1-handles are orientable is by attaching
three trivial $\left(  1,2\right)  $-handle pairs $\left\{  h_{0,j}%
^{1},h_{0,j}^{2}\right\}  _{j=1}^{3}$, then discarding the $2$-handles.

Recall that $G_{1}=\left\langle a_{1},a_{2},a_{3}\mid r_{1,1},r_{1,2}%
,r_{1,3}\right\rangle $ where $r_{1,1}=\left[  a_{2},a_{3}\right]  $,
$r_{1,2}=\left[  a_{1},a_{3}\right]  $, and $r_{1,3}=\left[  a_{1}%
,a_{2}\right]  $. Attach a trio of $2$-handles $\left\{  h_{1,j}^{2}\right\}
_{j=1}^{3}$ to $C_{0}$, where $h_{1,j}^{2}$ has attaching circle $r_{1,j}$.
Choose the framings of these handles so that, if the $2$-handles $\left\{
h_{0,j}^{2}\right\}  _{j=1}^{3}$ were added back in, then $\left\{
h_{1,j}^{2}\right\}  _{j=1}^{3}$ would be trivially attached (to an $n$-ball).
Let
\[
C_{1}=C_{0}\cup h_{1,1}^{2}\cup h_{1,2}^{2}\cup h_{1,3}^{2}%
\]
and note that $\pi_{1}\left(  C_{1}\right)  \cong\pi_{1}\left(  \partial
C_{1}\right)  \cong G_{1}$.\medskip

\noindent\textbf{Step 1. }(Constructing $A_{1}$ and $C_{2}$)

Attach three trivial $\left(  2,3\right)  $-handle pairs to $C_{1}$, disjoint
from the existing handles, then perform handle slides on each of the trivial
2-handles (over the handles $\{h_{1,j}^{2}\}_{j=1}^{3}$) so that the resulting
2-handles $h_{2,1}^{2}$, $h_{2,2}^{2}$ and $h_{2,3}^{2}$ have attaching
circles spelling out the words $r_{2,1}$, $r_{2,2}$ and $r_{2,3}$,
respectively. This is possible since each $r_{2,k}$ can be viewed as a product
of the loops $\left\{  r_{1,j}\right\}  _{j=1}^{3}$ and their inverses, which
are the attaching circles of $\{h_{1,j}^{2}\}_{j=1}^{3}$. Sliding a 2-handle
over $h_{1,j}^{2}$ inserts the loop $r_{1,j}^{\pm1}$ into the new attaching
circle of that 2-handle (with $\pm1$ depending on the orientation chosen).

By keeping track of the attaching 2-spheres of the trivial 3-handles after the
handle slides, it is possible to attach 3-handles $h_{2,1}^{3}$, $h_{2,2}^{3}%
$, and $h_{2,3}^{3}$ to $C_{1}\cup h_{2,1}^{2}\cup h_{2,2}^{2}\cup h_{2,3}%
^{2}$ that are complementary to $h_{2,1}^{2}$, $h_{2,2}^{2}$, and $h_{2,3}%
^{2}$, respectively. Then
\[
C_{1}\cup\left(  \cup_{j=1}^{3}h_{2,j}^{2}\right)  \cup\left(  \cup_{j=1}%
^{3}h_{2,j}^{3}\right)  \approx\medskip C_{1}.
\]
For later purposes, it is useful to have a schematic image of the attaching
circles of $\{h_{1,j}^{2}\}_{j=1}^{3}$ and the attaching $2$-spheres of the
complementary handles $\{h_{1,j}^{2}\}_{j=1}^{3}$. Figure
\ref{Fig4-AttachingCircle} provides such an image for one complementary pair.%
\begin{figure}
[ptb]
\begin{center}
\includegraphics[
height=2.5374in,
width=2.5374in
]%
{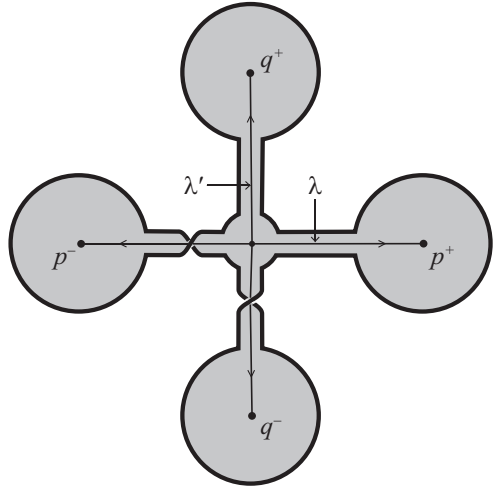}%
\caption{Attaching a $\left(  2,3\right)  $-handle pair}%
\label{Fig4-AttachingCircle}%
\end{center}
\end{figure}

\noindent The outer loop represents the attaching circle for an $h_{2,j}^{2}$
and the shaded region represents the `lower hemisphere' of the attaching
2-sphere of $h_{2,j}^{3}$; the `upper hemisphere', which is not shown, is a
parallel copy of the core of $h_{2,j}^{2}$. Within the lower hemisphere, the
small central disk represents the lower hemisphere of the 2-sphere before
handle slides. The arms are narrow strips whose centerlines are the paths
along which the handle slides were performed; diametrically opposite paths
lead to the same 2-handle, and are chosen to be parallel to a fixed path. We
have indicated this by labeling one pair of centerlines $\lambda$ and the
other $\lambda^{\prime}$. The four outer disks are parallel to the cores of
the 2-handles over which the slides were made. A twist in the strip leading to
an outer disk is used to reverse the orientation of the boundary of that disk.
Thus, diametrically opposite outer disks are parallel to each other, but with
opposite orientations. Center points of the outer disks represent transverse
intersections with belt spheres of those handles; thus, $p^{+}$ and $p^{-}$
are nearby intersections with the same belt sphere, and similarly for $q^{+}$
and $q^{-}$.

By rewriting $C_{1}\cup\left(  \cup_{j=1}^{3}h_{2,j}^{2}\right)  \cup\left(
\cup_{j=1}^{3}h_{2,j}^{3}\right)  $ as $C_{0}\cup\left(  \cup_{j=1}^{3}%
h_{1,j}^{2}\right)  \cup\left(  \cup_{j=1}^{3}h_{2,j}^{2}\right)  \cup\left(
\cup_{j=1}^{3}h_{2,j}^{3}\right)  $, we may reorder the handles so that
$h_{2,1}^{2}$, $h_{2,2}^{2}$, and $h_{2,3}^{2}$ are attached first. Define
\[
C_{2}=C_{0}\cup\left(  \cup_{j=1}^{3}h_{2,j}^{2}\right)
\]
and note that $\pi_{1}\left(  C_{2}\right)  \approx\pi_{1}\left(  \partial
C_{2}\right)  \approx G_{2}$. Furthermore,
\[
C_{2}\cup\left(  \cup_{j=1}^{3}h_{1,j}^{2}\right)  \cup\left(  \cup_{j=1}%
^{3}h_{2,j}^{3}\right)  \approx C_{1}.
\]
So, if we let
\[
A_{1}=(\partial C_{2}\times\left[  0,\varepsilon\right]  )\cup\left(
\cup_{j=1}^{3}h_{1,j}^{2}\right)  \cup\left(  \cup_{j=1}^{3}h_{2,j}%
^{3}\right)  \text{,}%
\]
(the result of excising the interior of a slightly shrunken copy of $C_{2}$),
then $\partial A_{1}\approx\partial C_{2}\sqcup\partial C_{1}$. By letting
$\Gamma_{1}=\partial C_{1}$ and $\Gamma_{2}=\partial C_{2}$ we obtain the
first cobordism of the construction $\left(  A_{1},\Gamma_{1},\Gamma
_{2}\right)  $. By avoiding the base point $p_{0}\in\partial C_{0}$ in all of
the above handle additions, we may let the arc $\rho_{1}\subseteq A_{1}$ be
the product line $p_{0}\times\left[  0,\varepsilon\right]  $, with $p_{1}$ and
$p_{2}$ its end points. Conditions (a) and (b) are then clear.\medskip

\noindent\textbf{Inductive Step. }(Constructing $A_{i}$ and $C_{i+1}$)

Assume the existence of a cobordism $\left(  A_{i-1},\Gamma_{i-1},\Gamma
_{i}\right)  $ satisfying (a) and (b) along with a compact manifold
$C_{i}=C_{0}\cup\left(  \cup_{j=1}^{3}h_{i,j}^{2}\right)  $, with the
attaching circle of each $h_{i,j}^{2}$ representing the relator $r_{i,j}$ in
the presentation of $G_{i}$, and $\partial C_{i}=\Gamma_{i}$. Attach three
trivial $\left(  2,3\right)  $-handle pairs to $C_{i}$, then perform handle
slides on each of the trivial 2-handles (over the handles $\{h_{i,j}%
^{2}\}_{j=1}^{3}$) so that the resulting 2-handles $h_{i+1,1}^{2}$,
$h_{i+1,2}^{2}$ and $h_{i+1,3}^{2}$ have attaching circles spelling out the
words $r_{i+1,1}$, $r_{i+1,2}$ and $r_{i1,3}$, respectively. This is possible
since each $r_{i+1,k}$ can be viewed as a product of the loops $\left\{
r_{i,j}\right\}  _{j=1}^{3}$ and their inverses, which are the attaching
circles of $\{h_{i,j}^{2}\}_{j=1}^{3}$.

By keeping track of the attaching 2-spheres of the trivial 3-handles under the
above handle slides, it is possible to attach 3-handles $h_{i+1,1}^{3}$,
$h_{i+1,2}^{3}$, and $h_{i+1,3}^{3}$ to $C_{i}\cup h_{i+1,1}^{2}\cup
h_{i+1,2}^{2}\cup h_{i+1,3}^{2}$ that are complementary to $h_{i+1,1}^{2}$,
$h_{i+1,2}^{2}$, and $h_{i+1,3}^{2}$, respectively. Then
\[
C_{i}\cup\left(  \cup_{j=1}^{3}h_{i+1,j}^{2}\right)  \cup\left(  \cup
_{j=1}^{3}h_{i+1,j}^{3}\right)  \approx C_{i}.
\]
A picture like Figure \ref{Fig4-AttachingCircle}, but with different indices,
describes the current situation.

Rewrite $C_{i}\cup\left(  \cup_{j=1}^{3}h_{i+1,j}^{2}\right)  \cup\left(
\cup_{j=1}^{3}h_{i+1,j}^{3}\right)  $ as $C_{0}\cup\left(  \cup_{j=1}%
^{3}h_{i,j}^{2}\right)  \cup\left(  \cup_{j=1}^{3}h_{i+1,j}^{2}\right)
\cup\left(  \cup_{j=1}^{3}h_{i+1,j}^{3}\right)  $, then reorder the handles so
that $h_{i+1,1}^{2}$, $h_{i+1,2}^{2}$, and $h_{i+1,3}^{2}$ are attached first.
Define
\[
C_{i+1}=C_{0}\cup\left(  \cup_{j=1}^{3}h_{i+1,j}^{2}\right)
\]
and note that $\pi_{1}\left(  C_{i+1}\right)  \approx\pi_{1}\left(  \partial
C_{i+1}\right)  \approx G_{i+1}$.

Furthermore,
\[
C_{i+1}\cup\left(  \cup_{j=1}^{3}h_{i,j}^{2}\right)  \cup\left(  \cup
_{j=1}^{3}h_{i+1,j}^{3}\right)  \approx C_{i}.
\]
Excising the interior of a slightly shrunken copy of $C_{i+1}$ gives%
\[
A_{i+1}=(\partial C_{i+1}\times\left[  0,\varepsilon\right]  )\cup\left(
\cup_{j=1}^{3}h_{i,j}^{2}\right)  \cup\left(  \cup_{j=1}^{3}h_{i+1,j}%
^{3}\right)  \text{,}%
\]
then $\partial A_{i+1}\approx\partial C_{i+1}\sqcup\partial C_{i}$. Noting
that $\Gamma_{i}=\partial C_{i}$ and letting $\Gamma_{i+1}=\partial C_{i+1}$,
we obtain $\left(  A_{i},\Gamma_{i},\Gamma_{i+1}\right)  $. By avoiding
$p_{i}\in\partial C_{i}$ in all of the handle additions, letting $\rho
_{i}\subseteq A_{i}$ be the product line $p_{i}\times\left[  0,\varepsilon
\right]  $, and $p_{i+1}$ the new end point, conditions (a) and (b) are
clear.\medskip

Assembling the pieces in the manner described in Figure
\ref{Fig3-Union of cobordisms} completes the construction. In particular, we
obtain a 1-ended open manifold%
\[
M^{n}=C_{1}\cup A_{1}\cup A_{2}\cup A_{3}\cup\cdots
\]
whose fundamental group at infinity is represented by the inverse sequence
(\ref{Chosen inverse sequence}).

\begin{remark}
\label{Remark: Dual handle decompositions}\emph{In the construction of
}$\left(  A_{i},\Gamma_{i},\Gamma_{i+1}\right)  $\emph{, we have written
}$\Gamma_{i}$\emph{ on the left and }$\Gamma_{i+1}$\emph{ on the right to
match the blueprint laid out in Figure \ref{Fig3-Union of cobordisms}. In that
case, the handle decomposition of }$A_{i}$\emph{ implicit in the construction
goes from right to left, with handles being attached to a collar neighborhood
}$\Gamma_{i+1}\times\left[  0,\varepsilon\right]  $\emph{ of }$\Gamma_{i+1}%
$\emph{. Later, when our perspective becomes reversed, we will pass to the
dual decomposition }%
\[
A_{i}=(\Gamma_{i}\times\left[  0,\varepsilon\right]  )\cup\left(  \cup
_{j=1}^{3}\overline{h}_{1,j}^{n-3}\right)  \cup\left(  \cup_{j=1}^{3}%
\overline{h}_{2,j}^{n-2}\right)
\]
\emph{where each }$\overline{h}^{n-p}$\emph{ is the dual of an original
}$h^{p}$\emph{ and }$\Gamma_{i}\times\left[  0,\varepsilon\right]  $\emph{ is
a thin collar neighborhood of }$\Gamma_{i}$\emph{.}
\end{remark}

\subsubsection{Absolute inward tameness of $M^{n}$}

The following proposition will complete the proof of Theorem
\ref{Theorem: Existence of counterexamples}.

\begin{proposition}
\label{Prop: absolute inward tameness of Mn}For the manifolds $M^{n}$
constructed above, each clean neighborhood of infinity
\[
N_{i}=A_{i}\cup A_{i+1}\cup A_{i+2}\cup\cdots
\]
has finite homotopy type. Thus, $M^{n}$ is absolutely inward tame.
\end{proposition}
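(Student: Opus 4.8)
The plan is to show that each $N_i = A_i \cup A_{i+1} \cup A_{i+2} \cup \cdots$ collapses, up to homotopy, onto a finite subcomplex, by exploiting the complementary handle pairs built into the construction. Recall from Remark~\ref{Remark: Dual handle decompositions} that in the dual decomposition each cobordism $A_i$ is obtained from a collar $\Gamma_i \times [0,\varepsilon]$ by attaching three $(n-3)$-handles $\overline{h}_{1,j}^{n-3}$ and three $(n-2)$-handles $\overline{h}_{2,j}^{n-2}$. Correspondingly, $N_i$ has an infinite handle decomposition built on $\Gamma_i \times [0,\varepsilon]$ with handles only of index $n-3$ and $n-2$. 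The key structural fact is that for each $j$, the $(n-2)$-handle $\overline{h}_{2,j}^{n-2}$ contributed by $A_i$ was, in the original picture, the $3$-handle $h_{i+1,j}^3$ which is \emph{complementary} to the $2$-handle $h_{i+1,j}^2$; and that $2$-handle becomes, in the dual decomposition of the \emph{next} block $A_{i+1}$, the $(n-3)$-handle $\overline{h}_{1,j}^{n-3}$. Thus the $(n-2)$-handles of $A_i$ pair off, in complementary fashion, with the $(n-3)$-handles of $A_{i+1}$, after an appropriate isotopy making the relevant attaching/belt sphere intersection a single transverse point.

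First I would make this pairing precise at the chain level: the cellular chain complex of $(N_i, \Gamma_i)$ with $\mathbb{Z}[\pi_1 N_i]$-coefficients (equivalently, work one stage at a time over $\mathbb{Z}[\pi_1(A_i\cup\cdots\cup A_k)]$ and note the fundamental groups are all $G_i$ via conditions (a),(b)) is concentrated in degrees $n-3$ and $n-2$, and the boundary map $\partial_{n-2}$ is, after ordering the handles block-by-block, ``lower triangular'' with the diagonal blocks given by the complementary-pair intersection numbers, i.e.\ identity matrices. More concretely, the $(n-2)$-handles coming from $A_i$ map isomorphically (up to a unit) onto a free summand generated by the $(n-3)$-handles coming from $A_{i+1}$, modulo lower-order contributions from $A_{i+2}, A_{i+3}, \ldots$. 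Hence $\partial_{n-2}$ is injective with free cokernel generated by the three $(n-3)$-handles of the \emph{first} block $A_i$, and $H_*(N_i,\Gamma_i;\mathbb{Z}[\pi_1 N_i])$ vanishes except in degree $n-3$, where it is free of rank $3$.

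Next I would upgrade this algebraic statement to a geometric one: since each complementary pair $(\overline{h}_{2,j}^{n-2}\subseteq A_i,\ \overline{h}_{1,j}^{n-3}\subseteq A_{i+1})$ genuinely forms a trivial handle pair (the intersection of attaching sphere and belt sphere being a single transverse point, arranged in the construction), one may cancel them one pair at a time, working outward from the deep end. Cancelling all such pairs simultaneously (there is no interference because distinct pairs live in disjoint regions, by general position built into the construction) leaves a manifold that deformation retracts onto $C_i' = \Gamma_i\times[0,\varepsilon]$ together with the three leftover $(n-3)$-handles $\overline{h}_{1,j}^{n-3}$ from $A_i$ — a \emph{compact} manifold with the homotopy type of a finite CW complex. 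Therefore $N_i$ itself has finite homotopy type. Since the $N_i$ form a cofinal family of clean neighborhoods of infinity, $M^n$ is absolutely inward tame by Lemma~\ref{Lemma: inward tame}.

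The main obstacle is the geometric cancellation of infinitely many handle pairs: one must verify that the isotopies bringing each pair into cancelling position can be chosen with pairwise disjoint supports, so that the cancellations can be performed all at once without an infinite sequence of mutually disruptive moves. This is exactly where the specifics of the construction matter — the handle slides in the inductive step were performed along narrow strips (the $\lambda,\lambda'$ arcs of Figure~\ref{Fig4-AttachingCircle}) chosen parallel to a fixed path and avoiding the basepoint, and the resulting attaching spheres sit in disjoint collar regions of consecutive $\Gamma$'s. I would phrase the argument as: for each $k$, the finite union $A_i\cup\cdots\cup A_{i+k}$ simplifies (rel $\Gamma_i$) to $\Gamma_i\times[0,\varepsilon]$ with three $(n-3)$-handles plus a collar on $\Gamma_{i+k+1}$, compatibly as $k$ increases, and pass to the limit; alternatively, invoke the handle-cancellation directly in the infinite decomposition, noting each $(n-2)$-handle of $N_i$ has a unique designated cancelling $(n-3)$-handle in the next block and these cancellations commute. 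Either way, the payoff is a finite-homotopy-type model for $N_i$, completing the proof of Theorem~\ref{Theorem: Existence of counterexamples}.
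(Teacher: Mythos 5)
Your outline---dualize the handle decomposition, exploit the complementary pairs, and reduce each $N_i$ to $\Gamma_i$ with finitely many cells attached---is indeed the paper's strategy, but two of your central assertions are incorrect, and the step you treat as a technicality (passing to the infinite union) is where the actual proof lives. The first problem is that your pairing is backwards. The dual of a $p$-handle is an $(n-p)$-handle, so the $(n-2)$-handles of the dual decomposition are the duals $\overline{h}_{m,j}^{n-2}$ of the original $2$-handles $h_{m,j}^{2}$, not of the $3$-handles. Since the complementary pair $\left(  h_{m,j}^{2},h_{m,j}^{3}\right)$ has its $2$-handle in $A_{m}$ and its $3$-handle in $A_{m-1}$, dually it is the $(n-3)$-handles of each block that are cancelled by the $(n-2)$-handles of the \emph{next} block---the opposite of what you claim. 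The uncancelled handles are therefore the three $(n-2)$-handles $\overline{h}_{i,j}^{n-2}$ of the \emph{first} block; $\partial$ has kernel free of rank $3$ (it is not injective), and $H_{\ast}\left(  N_{i},\Gamma_{i};\mathbb{Z}G_{i}\right)$ is concentrated in degree $n-2$, not $n-3$. (Compare Remark \ref{Siebenmann's Lemma}, which exhibits $N_{i}\simeq\Gamma_{i}\cup\{f_{i,j}^{n-2}\}_{j=1}^{3}$, three $(n-2)$-cells attached.)

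The second problem is the limit. At each finite stage $A_{i,k}$ the cancellation leaves \emph{six} handles, not three: the three $(n-2)$-handles of the first block \emph{and} the three deepest $(n-3)$-handles $\overline{h}_{i+k,j}^{n-3}$, whose cancelling partners lie in the next block and are not yet present. Consequently $H_{n-3}\left(  A_{i,k},\Gamma_{i};\mathbb{Z}G_{i}\right)  \cong\left(  \mathbb{Z}G_{i}\right)  ^{3}\neq0$ and the inclusions $A_{i,k}\hookrightarrow A_{i,k+1}$ are \emph{not} homotopy equivalences, so your finite-stage simplifications are not ``compatible as $k$ increases.'' The substance of the paper's argument is precisely that the inclusion-induced maps on $H_{n-3}$ are \emph{zero}---the generators at stage $k$ are exactly the classes killed at stage $k+1$---so $H_{n-3}$ vanishes in the direct limit while $H_{n-2}$ stabilizes; finite homotopy type then follows by attaching three $(n-2)$- and three $(n-1)$-cells as in Remark \ref{Siebenmann's Lemma}. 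Your alternative, simultaneous geometric cancellation with pairwise disjoint supports, is not available either: by the Case 2 intersection analysis (Figure \ref{Fig4-AttachingCircle}), the attaching sphere of $\overline{h}_{m,u}^{n-2}$ genuinely meets the belt spheres of the adjacent $(n-3)$-handles in the algebraically cancelling pairs $p^{\pm},q^{\pm}$, so consecutive cancelling pairs interfere and only the chain-level (algebraic) cancellation survives. You should redo the incidence computation with the correct duality and then supply the direct-limit argument explicitly.
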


This will be accomplished by examining $H_{\ast}\left(  N_{i},\Gamma_{i};%
\mathbb{Z}
G_{i}\right)  $ (equivalently, $H_{\ast}\left(  \widetilde{N}_{i}%
,\widetilde{\Gamma}_{i};%
\mathbb{Z}
\right)  $ viewed as a $%
\mathbb{Z}
G_{i}$-module) where $G_{i}=\pi_{1}\left(  N_{i}\right)  =\pi_{1}\left(
\Gamma_{i}\right)  $. In particular, we will prove:

\begin{claim}
\label{Claim: homology of N_i}For each $i$, $H_{\ast}\left(  N_{i},\Gamma_{i};%
\mathbb{Z}
G_{i}\right)  $ is trivial in all dimensions except for $\ast=n-2$, where it
is isomorphic to the free module $\left(
\mathbb{Z}
G_{i}\right)  ^{3}=%
\mathbb{Z}
G_{i}\oplus%
\mathbb{Z}
G_{i}\oplus%
\mathbb{Z}
G_{i}$.
\end{claim}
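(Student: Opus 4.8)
The plan is to compute $H_*(N_i,\Gamma_i;\mathbb{Z}G_i)$ directly from the \emph{dual} handle structure of $N_i$ described in Remark~\ref{Remark: Dual handle decompositions}. Stacking the dual decompositions of $A_i,A_{i+1},A_{i+2},\dots$ exhibits $N_i$ as a collar $\Gamma_i\times[0,\varepsilon]$ with, for each $m\geq i$, three $(n-3)$-handles $\{\overline h^{\,n-3}_{m+1,k}\}_{k=1}^3$ (duals of the $3$-handles $h^3_{m+1,k}$ of $A_m$) and three $(n-2)$-handles $\{\overline h^{\,n-2}_{m,k}\}_{k=1}^3$ (duals of the $2$-handles $h^2_{m,k}$ of $A_m$) attached, the $(n-3)$-handles of each $A_m$ preceding its $(n-2)$-handles. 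Since $n\geq6$ all indices are $\geq3$, consistent with $\pi_1(N_i)\cong\pi_1(\Gamma_i)\cong G_i$ from conditions (a), (b). Passing to the universal cover, the relative cellular chain complex of $(\widetilde N_i,\widetilde\Gamma_i)$, as a complex of free $\mathbb{Z}G_i$-modules, is concentrated in degrees $n-3$ and $n-2$:
\[
0\longrightarrow C_{n-2}\xrightarrow{\ \partial\ }C_{n-3}\longrightarrow 0,
\]
with $C_{n-2}=\bigoplus_{m\geq i}F_m$, $F_m=\langle\overline h^{\,n-2}_{m,1},\overline h^{\,n-2}_{m,2},\overline h^{\,n-2}_{m,3}\rangle\cong(\mathbb{Z}G_i)^3$, and $C_{n-3}=\bigoplus_{m\geq i+1}E_m$, $E_m=\langle\overline h^{\,n-3}_{m,1},\overline h^{\,n-3}_{m,2},\overline h^{\,n-3}_{m,3}\rangle\cong(\mathbb{Z}G_i)^3$. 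In particular $H_*(N_i,\Gamma_i;\mathbb{Z}G_i)=0$ automatically for $*\notin\{n-3,n-2\}$, and it remains to show $\ker\partial\cong(\mathbb{Z}G_i)^3$ and $\operatorname{coker}\partial=0$.

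\textbf{Identifying $\partial$.} The coefficient of $\overline h^{\,n-3}_{m',k'}$ in $\partial\overline h^{\,n-2}_{m,k}$ is the $\mathbb{Z}G_i$-intersection number of the attaching sphere of $\overline h^{\,n-2}_{m,k}$ (= belt sphere of $h^2_{m,k}$) with the belt sphere of $\overline h^{\,n-3}_{m',k'}$ (= attaching sphere of $h^3_{m',k'}$). Three facts pin this down. \emph{First}, for $m\geq i+1$ the $3$-handle $h^3_{m,k}$ was built in the construction to be complementary to $h^2_{m,k}$, and both handles lie in $N_i$ (in $A_{m-1}$ and $A_m$); so this intersection number is $\pm g_{m,k}$ for a single unit $g_{m,k}\in G_i$, and is $0$ against $h^3_{m,k'}$ for $k'\neq k$, the three complementary $(2,3)$-pairs of a given stage being built inside pairwise disjoint balls. \emph{Second}, the only further intersections come from the handle slides: at stage $m$ the attaching sphere of each $h^3_{m+1,k'}$ is dragged over the cores of the $h^2_{m,\cdot}$ (realizing $r_{m+1,k'}$ as a commutator of the $r_{m,\cdot}$), acquiring intersection points with their belt spheres; but these occur in pairs ($p^\pm,q^\pm$ of Figure~\ref{Fig4-AttachingCircle}) with opposite signs and, the diametrically opposite strips being parallel, the \emph{same} $\mathbb{Z}G_i$-decoration, so the corresponding intersection numbers vanish over $\mathbb{Z}G_i$. \emph{Third}, for $m'<m$ the pair $(\overline h^{\,n-3}_{m',k'},\overline h^{\,n-2}_{m',k'})$ is dual to a complementary pair; performing the finitely many such cancellations (pushing later handles off the cancellation regions) before $\overline h^{\,n-2}_{m,k}$ is attached shows these coefficients are $0$. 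Hence $\partial\overline h^{\,n-2}_{m,k}=\pm g_{m,k}\,\overline h^{\,n-3}_{m,k}$ for $m\geq i+1$ and $\partial\overline h^{\,n-2}_{i,k}=0$.

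\textbf{Conclusion and the obstacle.} With $\partial$ in this diagonal form, $\partial$ annihilates $F_i$ and carries $\bigoplus_{m\geq i+1}F_m$ isomorphically onto $\bigoplus_{m\geq i+1}E_m=C_{n-3}$. Thus $\operatorname{coker}\partial=0$ and $\ker\partial=F_i\cong(\mathbb{Z}G_i)^3$, giving $H_{n-2}(N_i,\Gamma_i;\mathbb{Z}G_i)\cong(\mathbb{Z}G_i)^3$ and vanishing in all other degrees, which is the Claim. Since the chain complex is then chain homotopy equivalent to $(\mathbb{Z}G_i)^3$ in degree $n-2$, it follows by the standard finiteness-obstruction argument (Wall; $G_i$ is finitely presented and the sole homology module is finitely generated free) that $N_i$ has finite homotopy type, completing the proof of Proposition~\ref{Prop: absolute inward tameness of Mn}. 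The main obstacle is the second and third facts above: verifying that the slide-generated intersections cancel over the group ring $\mathbb{Z}G_i$, not merely over $\mathbb{Z}$, and that no spurious incidences survive. This is exactly where the detailed bookkeeping of the construction is used — the parallel slide paths of Figure~\ref{Fig4-AttachingCircle} and the commutator structure of the $r_{m+1,k}$. Should establishing the clean diagonal form prove awkward, one can instead record the weaker "almost upper-triangular, unit diagonal blocks" shape of $\partial$ and exhaust $N_i$ by the compact cobordisms $A_i\cup\cdots\cup A_p$, obtaining the same conclusion by a direct-limit argument.
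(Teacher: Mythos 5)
Your proposal is correct and follows essentially the same route as the paper: the dual handle decomposition concentrated in indices $n-3$ and $n-2$, the same three-case analysis of $\mathbb{Z}G_i$-intersection numbers (complementary pairs give units, slide-generated intersections cancel in pairs with equal group decorations, all other pairs are disjoint), yielding the same diagonal boundary map with kernel $(\mathbb{Z}G_i)^3$ and trivial cokernel. The only difference is presentational: you work with the infinite chain complex of $N_i$ at once, whereas the paper computes on the compact pieces $A_{i,k}$ and passes to the direct limit -- the variant you yourself mention as a fallback.
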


Once this claim is established, Proposition
\ref{Prop: absolute inward tameness of Mn} follows from \cite[Lemma 6.2]{Si}.
In Remark \ref{Siebenmann's Lemma} at the conclusion of this section, we
explain why this final observation is elementary, requiring no discussion of
finite dominations or finiteness obstructions.

For proving the claim, it is useful to consider compact subsets of the form%
\[
A_{i,k}=A_{i}\cup A_{i+1}\cup\cdots\cup A_{k}.
\]
By repeated application of Remark \ref{Remark: Dual handle decompositions},
there is a handle decomposition of $A_{i,k}$ based on $\Gamma_{i}\times\left[
0,\varepsilon\right]  $ with handles only of indices $n-3$ and $n-2$. By
reordering the handles, $\left(  A_{i,k},\Gamma_{i}\right)  $ is seen to be
homotopy equivalent to a finite relative CW complex $\left(  K_{i,k}%
,\Gamma_{i}\right)  $ where $K_{i,k}$ consists of $\Gamma_{i}$ with an
$\left(  n-3\right)  $-cell attached for each $\left(  n-3\right)  $-handle of
$A_{i,k}$ followed by an $\left(  n-2\right)  $-cell for each $\left(
n-2\right)  $-handle. In the usual way, the $%
\mathbb{Z}
G_{i}$-incidence number of an $\left(  n-2\right)  $-cell with an $\left(
n-3\right)  $-cell is equal to the $%
\mathbb{Z}
G_{i}$-intersection number between the belt sphere of the corresponding
$\left(  n-3\right)  $-handle and the attaching sphere of the corresponding
$\left(  n-2\right)  $-handle. This process produces a sequence%
\[
K_{i,i}\subseteq K_{i,i+1}\subseteq K_{i,i+2}\subseteq\cdots
\]
of relative CW complexes with direct limit a relative CW pair $\left(
K_{i,\infty},\Gamma_{i}\right)  $ homotopy equivalent to $\left(  N_{i}%
,\Gamma_{i}\right)  $. So we can determine $H_{\ast}\left(  N_{i},\Gamma_{i};%
\mathbb{Z}
G_{i}\right)  $ by calculating $H_{\ast}\left(  A_{i,k},\Gamma_{i};%
\mathbb{Z}
G_{i}\right)  $ and taking the direct limit as $k\rightarrow\infty.$

The $%
\mathbb{Z}
G_{i}$-handle chain complex for $\left(  A_{i,k},\Gamma_{i}\right)  $
(equivalently, the $%
\mathbb{Z}
G_{i}$-cellular chain complex for $\left(  K_{i,k},\Gamma_{i}\right)  $) looks
like%
\[
0\longrightarrow\mathcal{C}_{n-2}\overset{\partial}{\longrightarrow
}\mathcal{C}_{n-3}\longrightarrow0
\]
where $\mathcal{C}_{n-2}$ and $\mathcal{C}_{n-3}$ are finitely generated free
$%
\mathbb{Z}
G_{i}$-modules generated by the handles of $A_{i,k}$, and the boundary map is
determined by $%
\mathbb{Z}
G_{i}$-intersection numbers between the belt spheres of $\left(  n-3\right)
$-handles and attaching spheres of the $\left(  n-2\right)  $-handles. These
intersection numbers will be determined by returning to the
construction.\medskip

Beginning with the compact manifold $C_{i}=C_{0}\cup\left(  \cup_{j=1}%
^{3}h_{i,j}^{2}\right)  $, attach three trivial $\left(  2,3\right)  $-handle
pairs, then perform handle slides on the 2-handles (over the handles
$\{h_{i,j}^{2}\}_{j=1}^{3}$) to obtain $h_{i+1,1}^{2}$, $h_{i+1,2}^{2}$ and
$h_{i+1,3}^{2}$ with attaching circles $r_{i+1,1}$, $r_{i+1,2}$ and
$r_{i+1,3}$, respectively. Having kept track of the attaching 2-spheres of the
trivial 3-handles under the handle slides, attach 3-handles $h_{i+1,1}^{3}$,
$h_{i+1,2}^{3}$, and $h_{i+1,3}^{3}$ to $C_{i}\cup h_{i+1,1}^{2}\cup
h_{i+1,2}^{2}\cup h_{i+1,3}^{2}$ that are complementary to $h_{i+1,1}^{2}$,
$h_{i+1,2}^{2}$, and $h_{i+1,3}^{2}$, respectively (all as described
`inductive step' above). This can all be done so that $h_{i+1,1}^{3}$,
$h_{i+1,2}^{3}$, and $h_{i+1,3}^{3}$ do not touch the earlier $2$-handles
$h_{i,1}^{2}$, $h_{i,2}^{2}$ and $h_{i,3}^{2}$. Next attach a second trio of
trivial $\left(  2,3\right)  $-handle pairs, taking care that they are
disjoint from the existing handles, and slide the trivial $2$-handles over the
$2$-handles $\{h_{i+1,j}^{2}\}_{j=1}^{3}$ so that the resulting 2-handles
$\{h_{i+2,j}^{2}\}_{j=1}^{3}$ have attaching circles $r_{i+2,1}$, $r_{i+2,2}$
and $r_{i+2,3}$. Again, having kept track of the attaching 2-spheres of the
trivial 3-handles under the handle slides, attach 3-handles $h_{i+2,1}^{3}$,
$h_{i+2,2}^{3}$, and $h_{i+2,3}^{3}$ to%
\[
C_{i}\cup\left(  \cup_{j=1}^{3}h_{i+1,j}^{2}\right)  \cup\left(  \cup
_{j=1}^{3}h_{i+1,j}^{3}\right)  \cup\left(  \cup_{j=1}^{3}h_{i+2,j}%
^{2}\right)
\]
that are complementary to $h_{i+2,1}^{2}$, $h_{i+2,2}^{2}$, and $h_{i+2,3}%
^{2}$, respectively, while taking care that these new 3-handles are completely
disjoint from all 2- and 3-handles of lower index. Continue this process $k-i$
times, at each stage: attaching three trivial $\left(  2,3\right)  $-handle
pairs disjoint from the existing handles; sliding the trivial $2$-handles over
the $2$-handles created in the previous step, in the manner prescribed above;
then attaching 3-handles complementary to these new 2-handles (and disjoint
from earlier 2- and 3-handles) along the images of the attaching 2-spheres of
the trivial 3-handles after the handle slides.

Since all of the 2- and 3-handles mentioned above, except for the original
2-handles $h_{i,1}^{2}$, $h_{i,2}^{2}$ and $h_{i,3}^{2}$, occur in
complementary pairs, the manifold we just created is just a thickened copy of
$C_{i}$; let us call it $C_{i}^{\prime}$. By the standard reordering lemma, we
may arrange that the 2-handles are pairwise disjoint, and all are attached
before any of the 3-handles---which are also are attached in a pairwise
disjoint manner. Then%

\begin{align*}
C_{i}^{\prime}  &  =C_{i}\cup\left(  \cup_{s=1}^{k}\left(  \cup_{j=1}%
^{3}h_{i+s,j}^{2}\right)  \right)  \cup\left(  \cup_{s=1}^{k}\left(
\cup_{j=1}^{3}h_{i+s,j}^{3}\right)  \right) \\
&  =C_{0}\cup\left(  \cup_{j=1}^{3}h_{i,j}^{2}\right)  \cup\left(  \cup
_{s=1}^{k}\left(  \cup_{j=1}^{3}h_{i+s,j}^{2}\right)  \right)  \cup\left(
\cup_{s=1}^{k}\left(  \cup_{j=1}^{3}h_{i+s,j}^{3}\right)  \right) \\
&  =C_{0}\cup\left(  \cup_{j=1}^{3}h_{i+k,j}^{2}\right)  \cup\left(
\cup_{j=1}^{3}h_{i,j}^{2}\right)  \cup\left(  \cup_{s=1}^{k-1}\left(
\cup_{j=1}^{3}h_{i+s,j}^{2}\right)  \right)  \cup\left(  \cup_{s=1}^{k}\left(
\cup_{j=1}^{3}h_{i+s,j}^{3}\right)  \right) \\
&  =C_{k}\cup\left(  \cup_{j=1}^{3}h_{i,j}^{2}\right)  \cup\left(  \cup
_{s=1}^{k-1}\left(  \cup_{j=1}^{3}h_{i+s,j}^{2}\right)  \right)  \cup\left(
\cup_{s=1}^{k}\left(  \cup_{j=1}^{3}h_{i+s,j}^{3}\right)  \right)
\end{align*}
where, going from the first to the second line, we apply the definition of
$C_{i}$; going from the second to the third, we bring the last triple of
2-handles forward to the beginning; and in going from the third to the fourth,
we apply the definition of $C_{k}$.

Excising a slightly shrunken copy of the interior of $C_{k}$ from
$C_{i}^{\prime}$ results in a cobordism between $\partial C_{k}=\Gamma_{k}$
and $\partial C_{i}^{\prime}\approx\Gamma_{i}$, which has a handle
decomposition%
\[
(\Gamma_{k}\times\left[  0,\varepsilon\right]  )\cup\left(  \cup_{j=1}%
^{3}h_{i,j}^{2}\right)  \cup\left(  \cup_{s=1}^{k-1}\left(  \cup_{j=1}%
^{3}h_{i+s,j}^{2}\right)  \right)  \cup\left(  \cup_{s=1}^{k}\left(
\cup_{j=1}^{3}h_{i+s,j}^{3}\right)  \right)  \text{.}%
\]
Comparing this handle decomposition to our earlier construction, reveals that
this cobordism is precisely $A_{i}\cup A_{i+1}\cup\cdots\cup A_{k}=A_{i,k}$.
In order to match the orientation of Figure \ref{Fig3-Union of cobordisms},
view $\Gamma_{k}$ as the right-hand boundary and $\Gamma_{i}$ as the left-hand
boundary, with 2- and 3-handles being attached from right to left. Before
switching to the dual handle decomposition, we analyze the $%
\mathbb{Z}
G_{i}$-intersection numbers between the attaching spheres of the 3-handles and
the belt spheres of the 2-handles. All should be viewed as submanifolds of the
left-hand boundary of $(\Gamma_{k}\times\left[  0,\varepsilon\right]
)\cup\left(  \cup_{j=1}^{3}h_{i,j}^{2}\right)  \cup\left(  \cup_{s=1}%
^{k-1}\left(  \cup_{j=1}^{3}h_{i+s,j}^{2}\right)  \right)  $, which has
fundamental group $G_{i}$.

For each $1\leq s\leq k$ and $j\in\left\{  1,2,3\right\}  $ let $\alpha
_{i+s,j}^{2}$ denote the attaching 2-sphere of $h_{i+s,j}^{3}$; and for each
$0\leq s^{\prime}\leq k-1$ and $j^{\prime}\in\left\{  1,2,3\right\}  $ let
$\beta_{i+s^{\prime},j^{\prime}}^{n-3}$ denote the belt $\left(  n-3\right)
$-sphere of $h_{i+s^{\prime},j^{\prime}}^{2}$ There are three cases to
consider.\medskip

\noindent\textbf{Case 1.} $s=s^{\prime}$.

Then for each $j$, the pair $\left(  h_{i+s,j}^{2},h_{i+s,j}^{3}\right)  $ is
complementary; in other words $\alpha_{i+s,j}^{2}$ intersects $\beta
_{i+s,j}^{n-3}$ transversely in a single point. Adjusting base paths, if
necessary, and being indifferent to orientation (since it will not affect our
computations), we have $\varepsilon_{%
\mathbb{Z}
G_{i}}\left(  \alpha_{i+s,j}^{2},\beta_{i+s,j}^{n-3}\right)  =\pm1$. If $j\neq
j^{\prime}$, then $h_{i+s,j}^{3}$ does not intersect $h_{i+s,j^{\prime}}^{2}$,
so $\varepsilon_{%
\mathbb{Z}
G_{i}}\left(  \alpha_{i+s,j}^{2},\beta_{i+s,j^{\prime}}^{n-3}\right)
=0$.\medskip

\noindent\textbf{Case 2.} $s=s^{\prime}+1$.

For each $j$, $\alpha_{i+s,j}^{2}$ can be split into a pair of disks. The
`upper hemisphere' lies in the the 2-handle $h_{i+s,j}^{2}$ and intersects
$\beta_{i+s,j}^{n-3}$ transversely in a single point; that point of
intersection was accounted for in Case 1. The `lower hemisphere' is analogous
to the one pictured in Figure \ref{Fig4-AttachingCircle}. If $\left\{
u,v\right\}  =\left\{  1,2,3\right\}  -\left\{  j\right\}  $, then one pair of
the diametrically opposite disks has boundaries labelled $r_{i+s-1,u}$ and
$r_{i+s-1,u}^{-1}$ and the disks are parallel to the core of $h_{i+s-1,u}^{2}%
$, so each intersects $\beta_{i+s-1,u}^{n-3}$ transversely in points
$p_{u}^{+}$ and $p_{u}^{-}$. Due to the flipped orientation of one of the
disks, these points of intersection, between $\alpha_{i+s,j}^{2}$ and
$\beta_{i+s-1,u}^{n-3}$, have opposite sign. Connecting $p_{u}^{+}$ and
$p_{u}^{-}$ by a path homotopic to $\lambda^{-1}\ast\lambda$ in $\alpha
_{i+s,j}^{2}$ and a short path $\mu$ connecting $p_{u}^{+}$ and $p_{u}^{-}$ in
$\beta_{i+s-1,u}^{n-3}$ yields a loop that is contractible in the left-hand
boundary of $(\Gamma_{k}\times\left[  0,\varepsilon\right]  )\cup\left(
\cup_{j=1}^{3}h_{i,j}^{2}\right)  \cup\left(  \cup_{s=1}^{k-1}\left(
\cup_{j=1}^{3}h_{i+s,j}^{2}\right)  \right)  $. So together $p_{u}^{+}$ and
$p_{u}^{-}$ contribute $0$ to to the $%
\mathbb{Z}
G_{i}$-intersection number of $\alpha_{i+s,j}^{2}$ and $\beta_{i+s-1,u}^{n-3}%
$; hence, $\varepsilon_{%
\mathbb{Z}
G_{i}}\left(  \alpha_{i+s,j}^{2},\beta_{i+s-1,u}^{n-3}\right)  =0$. Similarly
$\varepsilon_{%
\mathbb{Z}
G_{i}}\left(  \alpha_{i+s,j}^{2},\beta_{i+s-1,v}^{n-3}\right)  =0$. Finally,
$\alpha_{i+s,j}^{2}$ and $\beta_{i+s-1,j}^{n-3}$ do not intersect, so
$\varepsilon_{%
\mathbb{Z}
G_{i}}\left(  \alpha_{i+s,j}^{2},\beta_{i+s-1,j}^{n-3}\right)  =0$, as
well.\medskip

\noindent\textbf{Case 3.} $s\notin\{s^{\prime},s^{\prime}+1\}$.

In this case, the handles $h_{i+s,j}^{3}$ and $h_{i+s^{\prime},u}^{2}$ are
disjoint, so $\varepsilon_{%
\mathbb{Z}
G_{i}}\left(  \alpha_{i+s,j}^{2},\beta_{i+s,j^{\prime}}^{n-3}\right)  =0$.
\medskip

Now invert the above handle decomposition, to obtain a handle decomposition of
the cobordism $\left(  A_{i,k},\Gamma_{i},\Gamma_{k}\right)  $, based on
$\Gamma_{i},$ containing only $\left(  n-3\right)  $- and $\left(  n-2\right)
$-handles. Specifically, we have%
\[
(\Gamma_{i}\times\left[  0,\varepsilon\right]  )\cup\left(  \cup_{s=1}%
^{k}\left(  \cup_{j=1}^{3}\overline{h}_{i+s,j}^{n-3}\right)  \right)
\cup\left(  \cup_{j=1}^{3}\overline{h}_{i,j}^{n-2}\right)  \cup\left(
\cup_{s=1}^{k-1}\left(  \cup_{j=1}^{3}\overline{h}_{i+s,j}^{n-2}\right)
\right)  \text{.}%
\]
Since the belt sphere of each $\overline{h}^{n-3}$ is the attaching sphere of
its dual $h^{3}$ and the attaching sphere of of each $\overline{h}^{n-2}$ is
the belt sphere of its dual $h^{2}$, the incidence numbers between these
handles of this handle decomposition are determined (up to sign) by the
earlier calculations. So the cellular $%
\mathbb{Z}
G_{i}$-chain complex for the $\left(  A_{i,k},\Gamma_{i}\right)  $ is
isomorphic to
\[
0\rightarrow\bigoplus_{s=0}^{k-1}\left(
\mathbb{Z}
G_{i}\right)  ^{3}\overset{\partial}{\longrightarrow}\bigoplus_{s=1}%
^{k}\left(
\mathbb{Z}
G_{i}\right)  ^{3}\rightarrow0
\]
where, the $\left(
\mathbb{Z}
G_{i}\right)  ^{3}$ summands on the left are generated by the handles
$\left\{  \overline{h}_{i+s,j}^{n-2}\right\}  _{j=1}^{3}$ and those on the
right by $\left\{  \overline{h}_{i+s,j}^{n-3}\right\}  _{j=1}^{3}$. Since
$\varepsilon_{%
\mathbb{Z}
G_{i}}\left(  \alpha_{i+s,j}^{2},\beta_{i+s,j}^{n-3}\right)  =\pm1$ for all
$1\leq s\leq k-1$ and all other intersection numbers are $0$, the boundary map
is trivial on the $0^{\text{th}}$ copy of $\left(
\mathbb{Z}
G_{i}\right)  ^{3}$; misses the $k^{\text{th}}$ copy of $\left(
\mathbb{Z}
G_{i}\right)  ^{3}$ in the range; and restricts to an isomorphism
$\bigoplus_{s=1}^{k-1}\left(
\mathbb{Z}
G_{i}\right)  ^{3}\overset{\cong}{\longrightarrow}\bigoplus_{s=1}^{k-1}\left(
%
\mathbb{Z}
G_{i}\right)  ^{3}$ elsewhere. Thus
\begin{align*}
H_{n-2}\left(  A_{i,k},\Gamma_{i};%
\mathbb{Z}
G_{i}\right)   &  =\ker\partial\cong\left(
\mathbb{Z}
G_{i}\right)  ^{3}\text{, and }\\
H_{n-3}\left(  A_{i,k},\Gamma_{i};%
\mathbb{Z}
G_{i}\right)   &  =\operatorname*{coker}\partial\cong\left(
\mathbb{Z}
G_{i}\right)  ^{3}%
\end{align*}
where $H_{n-2}\left(  K_{i,k},\Gamma_{i}\right)  $ is generated by the $s=0$
summand, and $H_{n-3}\left(  K_{i,k},\Gamma_{i}\right)  $ is generated by the
$s=k$ summand.

Now consider the inclusion $A_{i,k}\hookrightarrow A_{i,k+1}$ and the
corresponding inclusion of $%
\mathbb{Z}
G_{i}$-chain complexes. The chain complex of $A_{i,k+1}$ will contain an extra
$\left(
\mathbb{Z}
G_{i}\right)  ^{3}$ summand in each dimension, generated by $\left\{
\overline{h}_{i+k,j}^{n-2}\right\}  _{j=1}^{3}$ and $\left\{  \overline
{h}_{i+k+1,j}^{n-3}\right\}  _{j=1}^{3}$, respectively. The boundary map takes
the new summand in the domain onto the previous cokernel, thereby killing
$H_{n-3}\left(  A_{i,k},\Gamma_{i};%
\mathbb{Z}
G_{i}\right)  $, and replacing it with a cokernel generated by $\left\{
\overline{h}_{i+k+1,j}^{n-3}\right\}  _{j=1}^{3}$. Said differently, the
inclusion induced map%
\[
i_{\ast}:H_{n-3}\left(  K_{i,k},\Gamma_{i};%
\mathbb{Z}
G_{i}\right)  \overset{0}{\longrightarrow}H_{n-3}\left(  K_{i,k+1},\Gamma_{i};%
\mathbb{Z}
G_{i}\right)
\]
is trivial. On the other hand, the expansion from $K_{i,k}$ to $K_{i,k+1}$
does not change $\ker\partial$, which is still generated by the handles
$\left\{  \overline{h}_{i,j}^{n-2}\right\}  _{j=1}^{3}$. In other words, the
inclusion induced map
\[
i_{\ast}:H_{n-2}\left(  K_{i,k},\Gamma_{i};%
\mathbb{Z}
G_{i}\right)  \overset{\cong}{\longrightarrow}H_{n-2}\left(  K_{i,k+1}%
,\Gamma_{i};%
\mathbb{Z}
G_{i}\right)
\]
is an isomorphism.

Taking direct limits, we have
\[
H_{\ast}\left(  N_{i},\Gamma_{i};%
\mathbb{Z}
G_{i}\right)  \cong\left\{
\begin{array}
[c]{ccc}%
\left(
\mathbb{Z}
G_{i}\right)  ^{3} &  & \text{if }\ast\ =n-2\\
0 &  & \text{otherwise}%
\end{array}
\right.  \text{.}%
\]
So the claim is proved.

\begin{remark}
\emph{\label{Siebenmann's Lemma}The appeal to \cite[Lemma 6.2]{Si} may give
the impression that obtaining Proposition
\ref{Prop: absolute inward tameness of Mn} from Claim
\ref{Claim: homology of N_i} is complicated---that is not the case. The
conclusion can be obtained directly as follows: If }$\left\{  e_{i,j}%
^{n-2}\right\}  _{j=1}^{3}$\emph{ represents the cores of the }$\left(
n-2\right)  $\emph{-handles }$\left\{  \overline{h}_{i,j}^{n-2}\right\}
$\emph{, which generate }$H_{\ast}\left(  N_{i},\Gamma_{i};%
\mathbb{Z}
G_{i}\right)  $\emph{, abstractly attach }$\left(  n-2\right)  $\emph{-disks
}$\left\{  f_{i,j}^{n-2}\right\}  _{j=1}^{3}$\emph{ to }$\Gamma_{i}$\emph{
along their boundaries. This does not affect fundamental groups, so by
excision, the pair }\newline$\left(  N_{i}\cup\left\{  f_{i,j}^{n-2}\right\}
_{j=1}^{3},\Gamma_{i}\cup\left\{  f_{i,j}^{n-2}\right\}  _{j=1}^{3}\right)
$\emph{ has the same }$%
\mathbb{Z}
G_{i}$\emph{-homology as }$\left(  N_{i},\Gamma_{i}\right)  $\emph{, with the
same generating set. Now attach an }$\left(  n-1\right)  $\emph{-cell }%
$g_{j}^{n-1}$\emph{ along each sphere }$e_{i,j}^{n-2}\cup f_{i,j}^{n-2}$\emph{
to obtain a pair }%
\[
\left(  N_{i}\cup\left\{  f_{i,j}^{n-2}\right\}  _{j=1}^{3}\cup\left\{
g_{i,j}^{n-2}\right\}  _{j=1}^{3},\Gamma_{i}\cup\left\{  f_{i,j}%
^{n-2}\right\}  _{j=1}^{3}\right)
\]
\emph{with trivial }$%
\mathbb{Z}
G_{i}$\emph{-homology in all dimensions. It follows that }%
\[
\Gamma_{i}\cup\left\{  f_{i,j}^{n-2}\right\}  _{j=1}^{3}\hookrightarrow
N_{i}\cup\left\{  f_{i,j}^{n-2}\right\}  _{j=1}^{3}\cup\left\{  g_{i,j}%
^{n-1}\right\}  _{j=1}^{3}\text{.}%
\]
\emph{is a homotopy equivalence. But notice that each }$g_{i,j}^{n-1}$\emph{
has a free face }$f_{i,j}^{n-2}$\emph{, so }$N_{i}\cup\left\{  f_{i,j}%
^{n-2}\right\}  _{j=1}^{3}\cup\left\{  g_{i,j}^{n-1}\right\}  _{j=1}^{3}%
$\emph{ collapses onto }$N_{i}$\emph{. Therefore, }$N_{i}$\emph{ is homotopy
equivalent to }$\Gamma_{i}\cup\left\{  f_{i,j}^{n-2}\right\}  _{j=1}^{3}%
$\emph{.}\bigskip
\end{remark}

\section{A remaining question\label{Section: Remaining Questions}}

In the introduction we commented that nearly pseudo-collarable manifolds admit
arbitrarily small clean neighborhoods of infinity $N$, containing codimension
0 submanifolds $A$ for which $A\hookrightarrow N$ is a homotopy equivalence.
Call such a pair $\left(  N,A\right)  $ a \emph{wide homotopy collar}. The
difference, of course, between a wide homotopy collar and a homotopy collar is
that, in the latter, the subspace is required to be a codimension 0
submanifold. The fact that nearly pseudo-collarable manifolds contain
arbitrarily small wide homotopy collars is immediate from the following easy lemma.

\begin{lemma}
Suppose $N^{\prime}$ is a ($\operatorname*{mod}J$)-homotopy collar
neighborhood of infinity in a manifold $M^{n}$ ($n\geq5)$, where $J$ is a
normally finitely generated subgroup of $\ker\left(  \pi_{1}\left(  N^{\prime
}\right)  \rightarrow\pi_{1}\left(  M^{n}\right)  \right)  $. Then $M^{n}$
contains a wide homotopy collar neighborhood of infinity $\left(  N,A\right)
$, where $N^{\prime}\subseteq N\subseteq M^{n}$.
\end{lemma}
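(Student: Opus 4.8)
The plan is to realize the purely algebraic move of Lemma~\ref{Lemma: (modL) h.e.} --- ``attach $2$-cells to $\partial N'$ and to $N'$ along normal generators of $J$'' --- by honest handle additions inside $M^n$, using the hypothesis $J\subseteq\ker(\pi_1(N')\to\pi_1(M^n))$ to place the handle cores on the core side of $\partial N'$.

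First I would fix a finite set $S=\{s_1,\dots,s_k\}$ with $\operatorname*{ncl}(S,\pi_1(\partial N'))=J$ --- possible since $\partial N'\hookrightarrow N'$ is a $\pi_1$-isomorphism, so $J$ may be regarded as a normally finitely generated normal subgroup of $\pi_1(\partial N')$ --- discarding any trivial generators. Represent $S$ by pairwise disjoint embedded loops $\gamma_1,\dots,\gamma_k$ in $\partial N'$. Since $S$ dies in $\pi_1(M^n)$, each $\gamma_j$ is null-homotopic in $M^n$ and so bounds a singular $2$-disk there. The main work --- the step I expect to be the real obstacle --- is to upgrade these singular disks, keeping their boundaries fixed, to pairwise disjoint \emph{embedded} $2$-disks $D_1,\dots,D_k$ that lie in $\overline{M^n-N'}$ and meet $\partial N'$ only in $\gamma_1,\dots,\gamma_k$. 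For $n\geq6$ this is standard general position plus an innermost-disk cleanup: general position embeds the disks, makes them disjoint, transverse to $\partial N'$, and (pushing rel boundary off a collar of $\partial M^n$) disjoint from $\partial M^n$; an innermost excess intersection circle $c\subseteq\partial N'$ cuts off a subdisk of $D_j$ lying on one side of $\partial N'$, and in either case $c$ bounds an embedded $2$-disk in a manifold ($\partial N'$ if $c$ is null-homotopic in $N'$, via the $\pi_1$-isomorphism; $\overline{M^n-N'}$ otherwise), so the usual exchange lowers the number of excess circles. Once each $D_j$ meets $\partial N'$ only in $\partial D_j=\gamma_j$, connectedness forces $\operatorname{int}D_j$ to lie on a single side, and it cannot lie in $N'$ since $\gamma_j$ is essential in $\pi_1(N')$; hence $D_j\subseteq\overline{M^n-N'}$. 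When $n=5$ the embedded-disk arguments in the $4$-manifold $\partial N'$ require the techniques of \cite{FQ}, and the resulting structure is only topological.

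Finally, each $D_j$ is contractible, hence has trivial normal bundle; let $h_j$ be a regular neighborhood of $D_j$, made pairwise disjoint, so $h_j$ is a $2$-handle attached to $\partial N'$ inside $\overline{M^n-N'}$. Choosing a collar $\partial N'\times[-\varepsilon,0]\subseteq\overline{M^n-N'}$ of $\partial N'$ that carries the attaching tubes of the $h_j$, I set $N=N'\cup(\partial N'\times[-\varepsilon,0])\cup h_1\cup\cdots\cup h_k$ and $A=(\partial N'\times[-\varepsilon,0])\cup h_1\cup\cdots\cup h_k$. Then $N$ is a clean neighborhood of infinity with $N'\subseteq N\subseteq M^n$, and $A$ is a compact codimension~$0$ submanifold of $N$ with $\overline{N-A}=N'$. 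Up to homotopy $N$ is $N'$ with a $2$-cell attached along each $s_j$, $A$ is $\partial N'$ with a $2$-cell attached along each $s_j$, and $A\hookrightarrow N$ realizes the inclusion $i'$ of Lemma~\ref{Lemma: (modL) h.e.}; since $\partial N'\hookrightarrow N'$ is a $(\operatorname{mod}J)$-homotopy equivalence and $\operatorname*{ncl}(S,\pi_1(\partial N'))=J$, that lemma makes $i'$, hence $A\hookrightarrow N$, a homotopy equivalence. Thus $(N,A)$ is the required wide homotopy collar. (If $J=\{1\}$ then $N'$ is already a homotopy collar and one may take $N=A=N'$.)
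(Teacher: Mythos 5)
Your overall architecture matches the paper's: realize the $2$-cells of Lemma \ref{Lemma: (modL) h.e.} by $2$-handles attached to $\partial N^{\prime}$ on the core side, form the resulting pair $(N,A)$, and quote that lemma. The final assembly is fine. The gap is in the step you yourself flag as the obstacle: producing the disjoint, properly embedded disks $D_{j}\subseteq\overline{M^{n}-N^{\prime}}$. Your innermost-circle exchange does not go through. To delete an excess circle $c$ of $D_{j}\cap\partial N^{\prime}$ you must cap $c$ by an embedded disk \emph{in the hypersurface} $\partial N^{\prime}$ and push it off to the appropriate side; this requires $c$ to be null-homotopic in $\partial N^{\prime}$. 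When the innermost subdisk $E_{c}$ lies in $N^{\prime}$ you get this from the $\pi_{1}$-isomorphism, but when $E_{c}$ lies in $\overline{M^{n}-N^{\prime}}$ the circle $c$ need not die in $\pi_{1}(\partial N^{\prime})$ at all, and the disk your ``otherwise'' branch produces (a disk in $\overline{M^{n}-N^{\prime}}$ --- essentially $E_{c}$ itself) does not remove the intersection circle. So the induction on the number of excess circles can stall. In addition, for $n=5$ your route needs embedded $2$-disks in the $4$-manifold $\partial N^{\prime}$, which \cite{FQ} supplies only under fundamental-group hypotheses you have not verified.

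The paper's proof avoids all of this with one observation you are missing: because $\partial N^{\prime}\hookrightarrow N^{\prime}$ is a $\pi_{1}$-isomorphism, Van Kampen's theorem applied to $M^{n}=\overline{M^{n}-N^{\prime}}\cup_{\partial N^{\prime}}N^{\prime}$ shows that $\pi_{1}\left(\overline{M^{n}-N^{\prime}}\right)\rightarrow\pi_{1}\left(M^{n}\right)$ is an isomorphism, so $\ker\left(\pi_{1}\left(\partial N^{\prime}\right)\rightarrow\pi_{1}\left(M^{n}\right)\right)=\ker\left(\pi_{1}\left(\partial N^{\prime}\right)\rightarrow\pi_{1}\left(\overline{M^{n}-N^{\prime}}\right)\right)$. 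Hence each $\gamma_{j}$ already bounds a singular disk in $\overline{M^{n}-N^{\prime}}$, and ordinary general position in that $n$-manifold ($2+2<n$ for $n\geq5$) makes the disks pairwise disjoint, embedded, and proper --- no intersection-circle surgery and no appeal to \cite{FQ}. With that substitution your argument is correct; note also that attaching the handles along normal generators of $J$ (rather than of the full kernel) is exactly what Lemma \ref{Lemma: (modL) h.e.} requires and is the cleaner choice.
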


\begin{proof}
Choose a finite collection of pairwise disjoint properly embedded 2-disks
$\left\{  D_{i}^{2}\right\}  _{i=1}^{k}$ in $\overline{M^{n}-N^{\prime
}\text{,}}$ with boundaries comprising a normal generating set for
$\ker\left(  \pi_{1}\left(  N^{\prime}\right)  \rightarrow\pi_{1}\left(
M^{n}\right)  \right)  $. Let $\left(  N,A\right)  $ be a regular neighborhood
pair for $\left(  N^{\prime}\cup\left(  \cup_{i=1}^{k}D_{i}^{2}\right)
,\partial N^{\prime}\cup\left(  \cup_{i=1}^{k}D_{i}^{2}\right)  \right)  $ and
apply Lemma \ref{Lemma: (modL) h.e.}.
\end{proof}

Examples constructed in this paper and in \cite{GT1} show that the existence
of arbitrarily small wide homotopy collars in a manifold $M^{n}$ does not
imply the existence of a pseudo-collar structure. The following seems likely
but, so far, we have been unable to find a proof.\medskip

\noindent\textbf{Question. }\emph{If a manifold with compact boundary, }%
$M^{n}$\emph{, contains arbitrarily small wide homotopy collar neighborhoods
of infinity, must }$M^{n}$\emph{ be nearly pseudo-collarable? }

\end{document}